\newtheorem{thm}{Theorem}[section]
\newtheorem{thm-defn}[thm]{Theorem/Definition}
\newtheorem{lem}[thm]{Lemma}
\newtheorem{prop}[thm]{Proposition}
\newtheorem{cor}[thm]{Corollary}
\theoremstyle{definition}
\newtheorem{defn}[thm]{Definition}
\theoremstyle{remark}
\numberwithin{equation}{section}
\newcommand{\sD}{\mathscr{D}}
\newcommand{\sL}{\mathscr{L}}
\newcommand{\sR}{\mathscr{R}}
\newcommand{\bF}{\mathbf{F}}
\newcommand{\bQ}{\mathbf{Q}}
\newcommand{\bZ}{\mathbf{Z}}
\newcommand{\fa}{\mathfrak{a}}
\newcommand{\fm}{\mathfrak{m}}
\newcommand{\fu}{\mathfrak{u}}
\newcommand{\fM}{\mathfrak{M}}
\newcommand{\fS}{\mathfrak{S}}
\newcommand{\cC}{\mathcal{C}}
\newcommand{\cE}{\mathcal{E}}
\newcommand{\cG}{\mathcal{G}}
\newcommand{\cH}{\mathcal{H}}
\newcommand{\cN}{\mathcal{N}}
\newcommand{\cO}{\mathcal{O}}
\newcommand{\p}{\varphi}
\newcommand{\arr}{\rightarrow}
\newcommand{\T}{\text}
\newcommand{\R}{\mathrm}
\begin{document}

\pagenumbering{arabic}

\title{Potentially semi-stable deformations of specified Hodge-Tate type and Galois type}
\author{Yong Suk Moon}
\date{}

\maketitle

\begin{abstract}
Let $k$ be a perfect field of characteristic $p > 2$, and let $K$ be a finite totally ramified extension of $W(k)[\frac{1}{p}]$. We prove that the locus of potentially semi-stable $\R{Gal}(\bar{K}/K)$-representations of a given Hodge-Tate type and Galois type is a closed subspace of the universal deformation ring, generalizing the result of Kisin (2007) where $k$ is assumed to be finite. 
\end{abstract}

\tableofcontents

\section{Introduction} \label{sec:1}

Let $k$ be a perfect field of characteristic $p > 2$, and let $W(k)$ be its ring of Witt vectors. Write $K_0 = \R{Frac}(W(k))$, and let $K/K_0$ be a finite totally ramified extension. We fix an algebraic closure $\bar{K}$ of $K$, and let $\cG_K \coloneqq \R{Gal}(\bar{K}/K)$ be the absolute Galois group of $K$. 

Let $E/\bQ_p$ be a finite extension with residue field $\bF$, and let $V_0$ be a finite dimensional $\bF$-representation of $\cG_K$. Denote by $\cC$ the category of local topological $\cO_E$-algebras $A$ such that the natural map $\cO_E\rightarrow A/\fm_A$ is surjective and the map from $A$ to the projective limit of its discrete artinian quotients is a topological isomorphism. If $V_0$ is absolutely irreducible, then there exists a universal deformation ring $R \in \cC$ with a deformation $V_R$ which parametrizes the isomorphism classes of deformations of $V_0$ (\cite{smit}). Note that $R$ is not necessarily noetherian in general when $k$ is not finite.

In this paper, we study the geometry of the locus of potentially semi-stable representations with a specified Hodge-Tate type $\bm{v}$ and Galois type $\tau$. We show that such a locus cuts out a closed subspace in the following sense:\\

\noindent\textbf{Theorem A}. \label{thm:1} \emph{There exists a closed ideal $\fa_{\bm{v}, \tau} \subset R$ such that the following holds: for any finite flat $\cO_E$-algebra $A$ and a continuous $\cO_E$-algebra homomorphism $f: R \rightarrow A$ (where we equip $A$ with the $(p)$-adic topology), the induced representation $A[\frac{1}{p}]\otimes_{f, R}V_R$ is potentially semi-stable of Hodge-Tate type $\bm{v}$ and Galois type $\tau$ if and only if $f$ factors through the quotient $R/\fa_{\bm{v}, \tau}$.	}\\

When the residue field $k$ is finite, Kisin proved the corresponding result in \cite[Theorem 2.7.6]{kisin-semistabledeformation}. One of the main steps in \cite{kisin-semistabledeformation} is the construction of the projective scheme which parametrizes representations of $E(u)$-height $\leq r$ for a fixed positive integer $r$ (cf. \cite[Section 1.2]{kisin-semistabledeformation}). It is obtained as a closed subscheme of the affine Grassmannian for the restriction of scalars $\text{Res}_{W(k)/\bZ_p}\text{GL}_d$. But this construction does not make sense in general when $k$ is infinite. The main difficulty is that we do not know how to analyze whether the restriction of scalars  $\text{Res}_{W(k)/\bZ_p}$ for a non-affine scheme over $W(k)$ is representable by an Ind-scheme when $k$ is infinite, even for simple examples such as $\mathbb{P}^1_{W(k)}$. 

Another approach to studying the locus cut out by certain $p$-adic Hodge theoretic conditions, motivated by Fontaine's conjecture in \cite{fontaine-deforming}, is to analyze torsion representations given as the subquotients of Galois stable lattices satisfying the given conditions. For semi-stable (or crystalline) representations having Hodge-Tate weights in $[0, r]$, this is carried out by Liu in \cite{liu-fontaineconjecture}. And in the case $k$ is finite, Liu proved the corresponding result for semi-stable representations of a given Hodge-Tate type in \cite{liu-filtration-torsion}.

We use the functor given in \cite{liu-filteredmodule} from the category of representations semi-stable over a totally ramified Galois extension $K'/K$ to the category of lattices in filtered modules equipped with Frobenius, monodromy, and $\text{Gal}(K'/K)$-action, in order to study the refined structure of a Hodge-Tate type and Galois type of torsion representations. We first study semi-stable representations of a given Hodge-Tate type and show that the locus of such representations is $p$-\textit{adically closed} (cf. Theorem \ref{thm:3.3}). This generalizes the corresponding result in \cite{liu-filtration-torsion} to the case $k$ is not necessarily finite. The proof in \cite{liu-filtration-torsion} is based on reducing to the situation when the coefficient field $E$ contains the Galois closure of $K$, thereby requiring $k$ to be finite. We remove such a restriction using a different argument. 

Then, we study potentially semi-stable representations of a given Galois type, and prove that such representations cut out a $p$-\textit{adically closed} locus (cf. Theorem \ref{thm:3.17}).

\section*{Acknowledgments}

I would like to express my sincere gratitude to Mark Kisin for suggesting me to work on this topic and making many helpful comments. This paper is based on a part of author's Ph.D. thesis under his supervision. I also wish to thank Brian Conrad and Tong Liu for helpful discussions on this work. I thank the referee for a careful reading of the paper and making helpful suggestions to improve it. I would like to thank the department of Mathematics at Harvard University and Purdue University for their cordial environment. This work was partly supported by Samsung Scholarship Foundation, South Korea.

\section{Torsion Representation and Construction of $M_{\mathrm{st}}$} \label{sec:2}

We keep the notations as in the introduction. Let $K' \subset \bar{K}$ be a finite totally ramified Galois extension of $K$. In this section, we will first explain the construction of the functor given in \cite{liu-filteredmodule} from the category of representations semi-stable over $K'$ to the category of lattices in filtered modules equipped with Frobenius, monodromy, and $\text{Gal}(K'/K)$-action. Then, we will explain the result proved in \cite{liu-filteredmodule} and \cite{liu-filtration-torsion} that one can associate a Hodge-Tate type and Galois type to a torsion representation up to some constant depending only on $K'$.

\subsection{Potentially Semi-stable Representation and Filtered $(\p, N, \Gamma)$-module} \label{sec:2.1}

For a $\cG_K$-representation $V$ over $\bQ_p$, we say $V$ is \textit{potentially semi-stable} if there exists a finite extension $L \subset \bar{K}$ of $K$ such that $V$ restricted to $\cG_L \coloneqq \text{Gal}(\bar{K}/L)$ is semi-stable. This means precisely that $\text{dim}_{\bQ_p} V = \text{dim}_{L_0}(B_{\mathrm{st}}\otimes_{\bQ_p}V^{\vee})^{\cG_L}$ where $L_0$ is the maximal unramified subextension of $L/K_0$.

Let $e' = [K':K_0]$. We fix a uniformizer $\pi$ of $K'$, and let $F(u)$ be the Eisenstein polynomial for $\pi$ over $K_0$. Denote by $\text{Rep}_{\bQ_p}^{\mathrm{pst}, K'}$ the category of $\cG_K$-representations over $\bQ_p$ which become semi-stable over $K'$ (i.e., semi-stable as $\text{Gal}(\bar{K}/K')$-representations). Let $\Gamma = \text{Gal}(K'/K)$ and $\cG_{K'} = \text{Gal}(\bar{K}/K')$. Note that $K_0$ is equipped with the natural Frobenius endomorphism $\p$.

We consider the category of filtered $(\p, N, \Gamma)$-modules whose objects are finite dimensional $K_0$-vector spaces $D$ equipped with:

\begin{itemize}
\item a Frobenius semi-linear injection $\p: D \arr D$,
\item $W(k)$-linear map $N: D \arr D$ such that $N\p = p\p N$,
\item decreasing filtration $\text{Fil}^iD_{K'}$ on $D_{K'} \coloneqq K'\otimes_{K_0}D$ by $K'$-sub-vector spaces such that $\text{Fil}^iD_{K'} = D_{K'}$ for $i \ll 0$	and $\text{Fil}^iD_{K'} = 0$ for $i \gg 0$, and
\item $K_0$-linear action by $\Gamma$ on $D$ which commutes with $\p$ and $N$. If we extend $\Gamma$-action semi-linearly to $D_{K'}$, then for any $\gamma \in \Gamma$, $\gamma(\text{Fil}^i D_{K'}) \subset \text{Fil}^i D_{K'}$. 
\end{itemize}
 
Morphisms between filtered $(\p, N, \Gamma)$-modules are $K_0$-linear maps compatible with all structures. The functor $D_{\mathrm{st}}^{K'}: V \mapsto (B_{\mathrm{st}}\otimes_{\bQ_p} V^{\vee})^{\cG_{K'}}$ is an equivalence between $\text{Rep}_{\bQ_p}^{\mathrm{pst}, K'}$ and the category of \textit{weakly admissible} filtered $(\p, N, \Gamma)$-modules (cf. \cite{colmez-fontaine}, \cite{fontaine-representations}).

We define an integral structure of a filtered $(\p, N, \Gamma)$-module.

\begin{defn} \label{defn:2.1}
Let $D$ be a filtered $(\p, N, \Gamma)$-module. A \textit{lattice} $M$ in $D$ is a finite free $W(k)$-submodule of $D$ such that $M[\frac{1}{p}] \coloneqq M\otimes_{\bZ_p}\bQ_p \cong D$, and $\p(M) \subset M$, $N(M) \subset M$, and $\gamma(M) \subset M$ for all $\gamma \in \Gamma$. For a lattice $M \subset D$, we equip $M_{K'} \coloneqq \cO_{K'}\otimes_{W(k)} M$ with the natural filtration by $\cO_{K'}$-submodules, given by $\R{Fil}^iM_{K'} = M_{K'} \cap \R{Fil}^i D_{K'}$. If $M_1, M_2$ are lattices in filtered $(\p, N, \Gamma)$-modules $D_1, D_2$ respectively, then a morphism $f: M_1 \arr M_2$ is a $W(k)$-linear map such that $f \otimes_{\bZ_p}\bQ_p : D_1 \arr D_2$ is a morphism of filtered $(\p, N, \Gamma)$-modules.  
\end{defn}

Note that for a lattice $M$ in a filtered $(\p, N, \Gamma)$-module, the associated graded $\cO_{K'}$-modules $\R{gr}^iM_{K'} = \R{Fil}^iM_{K'}/\R{Fil}^{i+1}M_{K'}$ is torsion free by the definition of the filtration. 
  
Let $r$ be a positive integer. Denote by $L^r(\p, N, \Gamma)$ the category of lattices in filtered $(\p, N, \Gamma)$-modules $D$ satisfying $\text{Fil}^0 D_{K'} = D_{K'}$ and $\T{Fil}^{r+1}D_{K'} = 0$. Let $\T{Rep}_{\bQ_p}^{\mathrm{pst}, K', r}$ be the full subcategory of $\text{Rep}_{\bQ_p}^{\mathrm{pst}, K'}$ whose objects have Hodge-Tate weights in $[0, r]$, and let $\T{Rep}_{\bZ_p}^{\mathrm{pst}, K', r}$ be the category of $\cG_K$-stable $\bZ_p$-lattices of representations in $\T{Rep}_{\bQ_p}^{\mathrm{pst}, K', r}$. The following theorem is proved in \cite{liu-filteredmodule}:

\begin{thm} \emph{(cf. \cite[Theorem 2.3]{liu-filteredmodule})} \label{thm:2.2}
There exists a faithful contravariant functor $M_{\mathrm{st}}$ from $\emph{Rep}_{\bZ_p}^{\mathrm{pst}, K', r}$ to $L^r(\p, N, \Gamma)$. If we denote by $M_{\mathrm{st}}\otimes_{\bZ_p}\bQ_p$ the functor $M_{\mathrm{st}}$ associated to the isogeny categories, then there exists a natural isomorphism of functors between $M_{\mathrm{st}}\otimes_{\bZ_p}\bQ_p$ and $D_{\mathrm{st}}^{K'}$.	
\end{thm}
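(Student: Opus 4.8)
Since $M_{\R{st}}$ is precisely the functor constructed by Liu in \cite{liu-filteredmodule}, the plan is to recall how that construction goes and indicate why it has the stated properties. Fix $T \in \R{Rep}_{\bZ_p}^{\R{pst}, K', r}$, that is, a $\cG_K$-stable $\bZ_p$-lattice in a $\bQ_p$-representation $V$ with $V|_{\cG_{K'}}$ semi-stable of Hodge-Tate weights in $[0,r]$. First I would restrict the $\cG_K$-action to $\cG_{K'}$ and apply Kisin's theory of $\p$-modules of finite height: restricting further to $\R{Gal}(\bar K / K'_\infty)$, where $K'_\infty = \bigcup_n K'(\pi^{1/p^n})$, one attaches to $T|_{\cG_{K'}}$ a finite free module $\fM = \fM(T)$ over $\fS = W(k)[[u]]$ (with $\p(u) = u^p$) carrying a Frobenius of $F(u)$-height $\leq r$, functorially in $T$. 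Reduction modulo $u$ gives a finite free $W(k)$-module $\fM/u\fM$ with an induced Frobenius, whose rationalization will turn out to be the $\p$-module underlying $D_{\R{st}}^{K'}(V)$.

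The second and main step installs the monodromy operator $N$, the Hodge filtration, and the $\Gamma$-action on this lattice. Base changing along $\p \colon \fS \to S$ to Breuil's ring yields $\cM := S \otimes_{\p, \fS} \fM$, which carries $\p$, a monodromy operator $N_\cM$ compatible with $N_S = -u\tfrac{d}{du}$, and a filtration $\R{Fil}^i \cM$; at the same time, Liu's $(\p, \hat{G})$-module formalism records the full $\cG_K$-action on $\fM$, and hence a semilinear action of $\Gamma = \R{Gal}(K'/K)$ on $\fM/u\fM$ commuting with $\p$ and $N$. Together these produce a $W(k)$-lattice $M = M_{\R{st}}(T)$ inside $D := D_{\R{st}}^{K'}(V)$ that is stable under $\p$, $N$, and $\Gamma$; one then equips $M_{K'} = \cO_{K'} \otimes_{W(k)} M$ with the filtration of Definition \ref{defn:2.1}, namely $\R{Fil}^i M_{K'} = M_{K'} \cap \R{Fil}^i D_{K'}$. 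That $D$, together with the $\p$, $N$, $\R{Fil}^\bullet$ obtained this way, is canonically $D_{\R{st}}^{K'}(V)$ is Kisin's comparison theorem for semi-stable representations; this yields at once the asserted natural isomorphism $M_{\R{st}} \otimes_{\bZ_p} \bQ_p \cong D_{\R{st}}^{K'}$, and, since the Hodge-Tate weights lie in $[0,r]$, the conditions $\R{Fil}^0 D_{K'} = D_{K'}$ and $\R{Fil}^{r+1} D_{K'} = 0$ defining $L^r(\p, N, \Gamma)$. Finally $\R{gr}^i M_{K'}$ is torsion free by the very choice of the filtration, so $M_{\R{st}}(T)$ lies in $L^r(\p, N, \Gamma)$.

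Functoriality of $M_{\R{st}}$ is inherited from that of $\fM(\cdot)$, of the base change to $S$, of reduction modulo $u$, and of the passage to the $(\p, \hat{G})$-module, the filtration being respected by morphisms because $D_{\R{st}}^{K'}$ is. Faithfulness is then formal: if $f \colon T_1 \arr T_2$ has $M_{\R{st}}(f) = 0$, then inverting $p$ gives $D_{\R{st}}^{K'}(f[\tfrac1p]) = 0$; but $D_{\R{st}}^{K'}$ is an anti-equivalence onto weakly admissible filtered $(\p, N, \Gamma)$-modules, hence faithful, so $f[\tfrac1p] = 0$ and therefore $f = 0$ because $T_1$ is torsion free.

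The hard part is entirely in the second step: one must check that $N$, the transition maps in Kisin's comparison isomorphism, and the semilinear $\Gamma$-action all preserve the \emph{integral} lattice $M$, not just the $\bQ_p$-vector space $D_{\R{st}}^{K'}(V)$. This is exactly where the bound ``$F(u)$-height $\leq r$'' (equivalently, Hodge-Tate weights in $[0,r]$) must be used, in order to control the relevant denominators, and where one has to be careful because $K'$ is ramified over $K_0$, so that the divided powers defining $S$ interact nontrivially with $u$. All of this is carried out in \cite{liu-filteredmodule}, and we take the above result as established there.
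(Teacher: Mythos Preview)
Your proposal is correct and follows essentially the same route as the paper, which in Section~\ref{sec:2.2} likewise recalls Liu's construction rather than reproving it: attach the $(\p,\hat{\cG})$-module $\fM$ of height $r$ to $T$, base-change to $S'_{K_0}$, identify the quotient $D=\sD/I_+S'_{K_0}\sD$ with $D_{\R{st}}^{K'}(V)$ via Proposition~\ref{prop:2.6}, and define $M_{\R{st}}(T)$ as the preimage of $\p^*(\fM)/u\p^*(\fM)$ under this isomorphism. One small imprecision worth flagging: the $(\p,\hat{\cG})$-module structure records only the $\cG_{K'}$-action (since $\hat{\cG}=\R{Gal}(K'_c/K')$), not the full $\cG_K$-action; the $\Gamma$-stability of $M_{\R{st}}(T)$ and its stability under $N$ are separate verifications in \cite[Corollary 2.12, Proposition 2.15]{liu-filteredmodule}, exactly as the paper records.
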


\subsection{Construction of $M_{\mathrm{st}}$} \label{sec:2.2}

We now explain briefly the construction in \cite{liu-filteredmodule} of the functor $M_{\mathrm{st}}$ in Theorem \ref{thm:2.2}. We first recall the definitions of period rings in $p$-adic Hodge theory. 

Let $S'$ be the $p$-adic completion of the divided power-envelope of $\fS = W(k)[\![u]\!]$ with respect to the ideal $(F(u))$. Denote $S'_{K_0} \coloneqq S'[\frac{1}{p}]$. Let $C_p$ be the $p$-adic completion of $\bar{K}$, and let $\cO_{C_p}$ be its ring of integers. We define $\cO_{C_p}^{\flat} \coloneqq \displaystyle\varprojlim_{x \mapsto x^p} \cO_{C_p}/p$. By the universal property of the ring of Witt vectors $W(\cO_{C_p}^{\flat})$, there exists a unique surjection $\theta: W(\cO_{C_p}^{\flat}) \arr \cO_{C_p}$, which lifts the projection $\cO_{C_p}^{\flat} \arr \cO_{C_p}/p$ onto the first factor of the inverse limit. We denote by $B^+_{\mathrm{dR}}$ the $\T{ker}(\theta)$-adic completion of $W(\cO_{C_p}^{\flat})[\frac{1}{p}]$. Let $A_{\mathrm{cris}}$ be the $p$-adic completion of the divided power-envelope of $W(\cO_{C_p}^{\flat})$ with respect to $\T{ker}(\theta)$. We fix a compatible system of $p^n$-th roots $\pi_n \in \cO_{\bar{K}}$ of $\pi$ for non-negative integers $n$, and let $\underline{\pi} \coloneqq (\pi_n) \in \cO_{C_p}^{\flat}$. We have an embedding $\fS \hookrightarrow W(\cO_{C_p}^{\flat})$ mapping $u$ to $[\underline{\pi}]$, and hence the embeddings $\fS \hookrightarrow S' \hookrightarrow A_{\mathrm{cris}}$ compatible with Frobenius endomorphisms. Let $B^+_{\mathrm{cris}} = A_{\mathrm{cris}}[\frac{1}{p}]$. Let $\fu = \T{log}[\underline{\pi}]$, and $B^+_{\mathrm{st}} = B^+_{\mathrm{cris}}[\fu]$. We also fix a compatible system of primitive $p^n$-th roots of unity $\zeta_{p^n} \in \cO_{\bar{K}}$ for non-negative integers $n$, and let $\underline{\epsilon} \coloneqq (\zeta_{p^n}) \in \cO_{C_p}^{\flat}$. Let $t = \T{log}[\underline{\epsilon}] \in B^+_{\mathrm{dR}}$. Note that we also have $t \in A_{\mathrm{cris}}$. Let $B_{\mathrm{dR}} = B^+_{\mathrm{dR}}[\frac{1}{t}], ~B_{\mathrm{cris}} = B^+_{\mathrm{cris}}[\frac{1}{t}]$, and $B_{\mathrm{st}} = B^+_{\mathrm{st}}[\frac{1}{t}]$.

We denote by $\cO_{\cE}$ the $p$-adic completion of $\fS[\frac{1}{u}]$, and let $\cE = \T{Frac}(\cO_{\cE})$. Let $\hat{\cE}^{\mathrm{ur}}$ be the $p$-adic completion of the maximal unramified subextension of $\cE$ in $W(\T{Frac}(\cO_{C_p}^{\flat}))[\frac{1}{p}]$, and $\cO_{\hat{\cE}^{\mathrm{ur}}}$ its ring of integers. We let $\fS^{\mathrm{ur}} = \cO_{\hat{\cE}^{\mathrm{ur}}} \cap W(\cO_{C_p}^{\flat})$.

We let $K'_{\infty} = \displaystyle\bigcup_{n=1}^{\infty} K'(\pi_n)$ and $K'_{p^{\infty}} = \displaystyle\bigcup_{n=1}^{\infty} K'(\zeta_{p^n})$. Let $K'_c = K'_{\infty}K'_{p^{\infty}}$, which is the Galois closure of $K'_{\infty}$ over $K'$. Let $\hat{\cG} = \T{Gal}(K'_c/K'), ~\cG_{\infty} = \T{Gal}(\bar{K}/K'_{\infty})$, and $\cH_{K'} = \T{Gal}(K'_c/K'_{\infty})$. Write 
\[
t^{\{i\}} = \frac{t^i}{p^{q(i)}q(i)!}
\] 
where $q(i)$ is defined by $i = q(i)(p-1)+r(i)$ with $0 \leq r(i) < p-1$. We define
\[
\sR_{K_0} \coloneqq \{\sum_{i=0}^{\infty} a_{i}t^{\{i\}} ~~|~~ a_{i} \in S'_{K_{0}},~~ a_{i} \rightarrow 0 \hspace{0.5em} p\T{-adically as} \hspace{0.5em} i \rightarrow \infty\}.
\]
We have a natural map $\nu: W(\cO_{C_p}^{\flat}) \arr W(\bar{k})$ induced by the projection $\cO_{C_p}^{\flat} \arr \bar{k}$, which can be seen to extend uniquely to $\nu: B^+_{\mathrm{cris}} \arr W(\bar{k})[\frac{1}{p}]$. For any subring $A \subset B^+_{\mathrm{cris}}$, write $I_+A \coloneqq A \cap \T{ker}(\nu)$. We have $I_+\fS = u\fS$ and 
\[
I_+S' = \{\sum_{i=1}^{\infty} \frac{b_{i}}{\lfloor \frac{i}{e'} \rfloor!} u^i ~|~ b_i \in W(k), ~b_i \arr 0  ~p\T{-adically as} ~i \arr \infty\}. 
\]
Define $\hat{\sR} = W(\cO_{C_p}^{\flat}) \cap \sR_{K_0}$ and $I_+ = I_+\hat{\sR}$. The following lemma is proved in \cite{liu-semistable-lattice}.

\begin{lem} \emph{(\cite[Lemma 2.2.1]{liu-semistable-lattice})} \label{lem:2.3}
\begin{enumerate}
\item $\hat{\sR}$ (resp. $\sR_{K_0}$) is a $\p$-stable $\fS$-algebra as a subring in $W(\cO_{C_p}^{\flat})$ (resp. $B^+_{\mathrm{cris}}$).
\item $\hat{\sR}$ and $I_+$ (resp. $\sR_{K_0}$ and $I_+\sR_{K_0}$) are $\cG_{K'}$-stable. The $\cG_{K'}$-actions on $\hat{\sR}$ and $I_+$ (resp. $\sR_{K_0}$ and $I_+\sR_{K_0}$) factor through $\hat{\cG}$.
\item $\sR_{K_0}/I_+\sR_{K_0} \cong K_0$ and $\hat{\sR}/I_+ \cong S'/I_+S' \cong \fS/u\fS \cong W(k)$.	
\end{enumerate}  
\end{lem}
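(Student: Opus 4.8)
The plan is to argue directly inside $B^+_{\mathrm{cris}}$, using the explicit spanning set $\{t^{\{i\}}\}_{i\ge0}$ and the known action of the Frobenius $\p$ and of $\cG_{K'}$ on $t$, on $u=[\underline{\pi}]$ and on $S'$, together with elementary $p$-adic estimates for the denominators $p^{q(i)}q(i)!$. For part (1), the ring-theoretic content is the divided-power identity $t^{\{i\}}t^{\{j\}}=c_{i,j}\,t^{\{i+j\}}$ with $c_{i,j}=p^{q(i+j)}q(i+j)!/(p^{q(i)+q(j)}q(i)!\,q(j)!)$, and $c_{i,j}\in\bZ_p$ because $q(i)+q(j)\le q(i+j)$ while $q(i)!\,q(j)!$ divides $(q(i)+q(j))!$, which divides $q(i+j)!$. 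Given this, for $x=\sum_i a_i t^{\{i\}}$ and $y=\sum_j b_j t^{\{j\}}$ with $a_i,b_j\in S'_{K_0}$ tending $p$-adically to $0$, the coefficients $\sum_{i+j=n}c_{i,j}a_ib_j$ of the Cauchy product again tend to $0$ (an ultrametric estimate, using that the $v_p(a_i)$ are bounded below); so $\sR_{K_0}$ is a subring of $B^+_{\mathrm{cris}}$ and $\hat{\sR}=W(\cO_{C_p}^{\flat})\cap\sR_{K_0}$ a subring of $W(\cO_{C_p}^{\flat})$. They contain $\fS$, since $\fS\hookrightarrow W(\cO_{C_p}^{\flat})$ by $u\mapsto[\underline{\pi}]$ while $\fS\subset S'\subset S'_{K_0}$ is the degree-zero part of $\sR_{K_0}$. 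Finally $\p(t)=pt$ gives $\p(t^{\{i\}})=p^i t^{\{i\}}$, so $\p(\sum a_i t^{\{i\}})=\sum p^i\p(a_i)t^{\{i\}}\in\sR_{K_0}$ since $\p$ is $p$-adically continuous and preserves $S'$; as $\p$ also preserves $W(\cO_{C_p}^{\flat})$, both rings are $\p$-stable.

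For part (2), recall that $g(t)=\chi(g)t$ with $\chi$ the cyclotomic character and $\chi(g)\in\bZ_p^{\times}$, and $g([\underline{\pi}])=[\underline{\epsilon}]^{a(g)}[\underline{\pi}]$ where $a\colon\cG_{K'}\to\bZ_p$ is the Kummer cocycle attached to $(\pi_n)$. The key sublemma is that $[\underline{\epsilon}]^{\pm a(g)}\in\hat{\sR}$: one has $[\underline{\epsilon}]^{a(g)}=\exp(a(g)t)=\sum_n \tfrac{a(g)^n p^{q(n)}q(n)!}{n!}\,t^{\{n\}}$, and the elementary estimate $v_p(p^{q(n)}q(n)!/n!)\to+\infty$ shows these coefficients lie in $S'_{K_0}$ and tend $p$-adically to $0$, while $[\underline{\epsilon}]^{a(g)}\in W(\cO_{C_p}^{\flat})$. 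Since $S'$ is the $p$-adic completion of the divided-power envelope of $W(k)[\![u]\!]$ along $(F(u))$, to get $g(S'_{K_0})\subset\sR_{K_0}$ it is enough to send $u$ and each $\gamma_m(F(u))$ into $\sR_{K_0}$: here $g(u)=[\underline{\epsilon}]^{a(g)}u\in\hat{\sR}$, and writing $F(g(u))=F(u)+\eta$ with $\eta\in t\,\sR_{K_0}$ (Taylor-expand $F$ at $u$, using $[\underline{\epsilon}]^{a(g)}-1\in t\,\sR_{K_0}$), one computes $g(\gamma_m(F(u)))=\sum_{j\le m}\gamma_j(\eta)\,\gamma_{m-j}(F(u))$ with $\gamma_{m-j}(F(u))\in S'$ and $\gamma_j(\eta)=\eta^j/j!=(t^j/j!)(\eta/t)^j\in\sR_{K_0}$. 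By continuity $g(S'_{K_0})\subset\sR_{K_0}$, hence $g(\sum a_i t^{\{i\}})=\sum\chi(g)^i g(a_i)t^{\{i\}}\in\sR_{K_0}$, so $\cG_{K'}$ preserves $\sR_{K_0}$ and therefore $\hat{\sR}$. Since $\nu$ is $\cG_{K'}$-equivariant, $\ker\nu$ is $\cG_{K'}$-stable, so $I_+=\hat{\sR}\cap\ker\nu$ and $I_+\sR_{K_0}=\sR_{K_0}\cap\ker\nu$ are $\cG_{K'}$-stable. Finally $\cG_{K'_c}$ fixes all $\zeta_{p^n}$ and all $\pi_n$, so $\chi$ and $a$ are trivial on it; hence $\cG_{K'_c}$ fixes $t$, $u$ and each $\gamma_m(F(u))$, i.e.\ all topological generators of $\sR_{K_0}$ and $\hat{\sR}$, so the action factors through $\hat{\cG}$.

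For part (3) one evaluates $\nu$ on generators. From $\underline{\epsilon}\equiv1$ and $\underline{\pi}\equiv0$ in $\bar k$ we get $\nu([\underline{\epsilon}])=1$ and $\nu(u)=0$, hence $\nu(t)=0$, $\nu(t^{\{i\}})=0$ for $i\ge1$, and $\nu(\sum a_i t^{\{i\}})=\nu(a_0)$. On $S'$, $\nu$ fixes $W(k)$, kills $u$, and sends $\gamma_m(F(u))$ to $F(0)^m/m!\in W(k)$; thus $\nu(S')=W(k)$ with kernel the augmentation ideal $I_+S'$, and similarly $\nu(\fS)=W(k)$ with kernel $u\fS$, giving $S'/I_+S'\cong\fS/u\fS\cong W(k)$. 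Inverting $p$, $\nu(\sR_{K_0})=K_0$ and $\sR_{K_0}/I_+\sR_{K_0}\xrightarrow{\ \sim\ }K_0$. For $\hat{\sR}$, $\nu(\hat{\sR})\subseteq\nu(W(\cO_{C_p}^{\flat}))\cap\nu(\sR_{K_0})=W(\bar k)\cap K_0=W(k)$ while $\nu(\hat{\sR})\supseteq\nu(W(k))=W(k)$, so $\hat{\sR}/I_+\xrightarrow{\ \sim\ }W(k)$, compatibly with the above since all these maps are induced by $\nu$.

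The step I expect to be the main obstacle is part (2): it requires the precise Galois action on $u$ and $t$, the non-formal containment $[\underline{\epsilon}]^{\pm a(g)}\in\hat{\sR}$ (resting on the $p$-adic size of $p^{q(n)}q(n)!/n!$), and, most delicately, the verification that $g$ carries the divided-power generators $\gamma_m(F(u))$ of $S'$ back into $\sR_{K_0}$ — the point at which the divided-power structures of $S'$ and of $A_{\mathrm{cris}}$ have to be reconciled. By comparison, the ring closure in (1) and the quotient computations in (3) are routine once these divided-power estimates are available.
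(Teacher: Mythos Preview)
The paper does not prove this lemma at all; it is quoted from \cite[Lemma~2.2.1]{liu-semistable-lattice} and used as a black box, so there is no proof here to compare against. Your sketch is the natural direct verification and is in the spirit of Liu's original argument.

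Parts (1) and (3) are fine; the divided-power estimate $c_{i,j}\in\bZ_p$ and the computation of $\nu$ on generators are correct as written. In part (2) there is one soft spot worth flagging: the sentence ``By continuity $g(S'_{K_0})\subset\sR_{K_0}$''. You have shown $g$ sends the generators $u$ and $\gamma_m(F(u))$ (hence the polynomial PD-subring) into $\sR_{K_0}$, but passing to the $p$-adic completion $S'$ requires the target to be $p$-adically closed, and $\sR_{K_0}$ with coefficients in $S'_{K_0}=S'[1/p]$ is not obviously so. The remedy is already implicit in your computations: every coefficient you produce --- the $c_{i,j}$, the $p^{q(n)}q(n)!/n!$ in the expansion of $[\underline{\epsilon}]^{a(g)}$, and the $t^j/j!$ appearing in $\gamma_j(\eta)$ --- lies in $\bZ_p$ (for the middle one, use $v_p(n!)=\sum_{i\ge1}\lfloor n/p^i\rfloor\le n/(p-1)<q(n)+1$, hence $\le q(n)$), so everything in fact lands in the integral subring $\{\sum a_it^{\{i\}}:a_i\in S',\ a_i\to0\}$. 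One then needs that this integral ring is $p$-adically separated and complete, which amounts to the $t^{\{i\}}$ forming a topological $S'$-basis inside $A_{\mathrm{cris}}$; this is true but not purely formal, and is exactly the structural input supplied in Liu's paper. Once that is granted, your argument goes through.
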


Let $r$ be a positive integer. A \textit{Kisin module of height} $r$ is a pair $(\fM, \p_{\fM})$ where $\fM$ is a finite free $\fS$-module, and $\p_{\fM}: \fM \arr \fM$ is a $\p$-semi-linear map such that the cokernel of the induced map $1\otimes \p_{\fM}: \p^*(\fM) \arr \fM$ is killed by $F(u)^r$. A morphism between two Kisin modules $\fM_1, \fM_2$ is a morphism as $\fS$-modules compatible with $\p_{\fM_i}$. Let $\T{Mod}_{\fS}^r(\p)$ denote the category of Kisin modules of height $r$. For $(\fM, \p_{\fM}) \in \T{Mod}_{\fS}^r(\p)$, we write $\hat{\fM} = \hat{\sR}\otimes_{\p, \fS}\fM$. The Frobenius $\p_{\fM}$ on $\fM$  naturally extends to $\hat{\fM}$ by $\p_{\hat{\fM}}(a\otimes m) = \p_{\hat{\sR}}(a)\otimes \p_{\fM}(m)$.

\begin{defn} \label{defn:2.4}
A $(\p, \hat{\cG})$-\textit{module of height} $r$ is a triple $(\fM, \p_{\fM}, \hat{\cG}_{\fM})$ satisfying the following:
\begin{itemize}
\item $(\fM, \p_{\fM})$ is  Kisin module of height $r$.
\item $\hat{\cG}_{\fM}$ denotes a $\hat{\sR}$-semi-linear $\hat{\cG}$-action on $\hat{\fM}$ which commutes with $\p_{\hat{\fM}}$ and induces a trivial action on $\hat{\fM}/I_+ \hat{\fM}$.
\item Considering $\fM$ as a $\p(\fS)$-submodule of $\hat{\fM}$, we have $\fM \subset \hat{\fM}^{\cH_{K'}}$.
\end{itemize}	
\end{defn}

A morphism between two $(\p, \hat{\cG})$-modules $\fM_1, \fM_2$ of height $r$ is a morphism in $\T{Mod}_{\fS}^r(\p)$ which commutes with the $\hat{\cG}$-actions. We denote by $\T{Mod}_{\fS}^r(\p, \hat{\cG})$ the category of $(\p, \hat{\cG})$-modules of height $r$. For $\hat{\fM} \in \T{Mod}_{\fS}^r(\p, \hat{\cG})$, we associate a $\bZ_p[\cG_{K'}]$-module $\hat{T}^{\vee}(\hat{\fM}) \coloneqq \T{Hom}_{\hat{\sR}, \p}(\hat{\fM}, W({\cO_{C_p}^{\flat}}))$ with $\cG_{K'}$-action given by $g(f)(x) = g(f(g^{-1}(x)))$ for $g \in \cG_{K'}, ~f \in \hat{T}^{\vee}(\hat{\fM})$. Here, $\cG_{K'}$-action on $\hat{\fM}$ is given by $\hat{\cG}$-action on $\hat{\fM}$. Moreover, for $\fM \in \T{Mod}_{\fS}^r(\p)$, we associate a $\bZ_p[\cG_{\infty}]$-module $T_{\fS}^{\vee}(\fM) \coloneqq \T{Hom}_{\fS, \p}(\fM, \fS^{\mathrm{ur}})$ similarly. The main result proved in \cite{liu-semistable-lattice} is the following.

\begin{thm} \emph{(cf. \cite[Theorem 2.3.1, Proposition 3.1.3]{liu-semistable-lattice})} \label{thm:2.5}
\begin{enumerate}
\item $\hat{T}^{\vee}$ induces an anti-equivalence between $\emph{Mod}_{\fS}^r(\p, \hat{\cG})$ and the category of $\cG_{K'}$-stable $\bZ_p$-lattices in semi-stable representations of $\cG_{K'}$ having Hodge-Tate weights in $[0, r]$.
\item $\hat{T}^{\vee}$ induces a natural $W({\cO_{C_p}^{\flat}})$-linear injection
\[
\hat{\iota}: W({\cO_{C_p}^{\flat}})\otimes_{\hat{\sR}}\hat{\fM} \arr W({\cO_{C_p}^{\flat}})\otimes_{\bZ_p}\hat{T}(\hat{\fM})	
\]
such that $\hat{\iota}$ is compatible with Frobenius maps and $\cG_{K'}$-actions on both sides. Here, $\hat{T}(\hat{\fM}) \coloneqq \R{Hom}_{\bZ_p}(\hat{T}^{\vee}(\hat{\fM}), \bZ_p)$.
\item There exists a natural isomorphism $T_{\fS}^{\vee}(\fM) \stackrel{\cong}{\arr} \hat{T}^{\vee}(\hat{\fM})$ of $\bZ_p[\cG_{\infty}]$-modules.
\end{enumerate}
\end{thm}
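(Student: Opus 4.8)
Proof plan for Theorem 2.5 (the final statement in the excerpt, attributed to Liu's *Lattices in semi-stable representations*).

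The plan is to prove the three parts in order, building on the equivalence of categories already available from Kisin-type theory for $\cG_{\infty}$-representations and the theory of Breuil–Kisin modules, then bootstrapping the $\hat{\cG}$-structure up from that. First I would recall (from Kisin's work, valid over any perfect residue field since $\fS = W(k)[\![u]\!]$ behaves the same way) that $T_{\fS}^{\vee}$ gives an anti-equivalence between $\T{Mod}_{\fS}^r(\p)$ and $\cG_{\infty}$-stable $\bZ_p$-lattices in finite free $\bZ_p$-representations of $\cG_{\infty}$ of $F(u)$-height $\le r$, and that these are exactly the restrictions to $\cG_{\infty}$ of lattices in semi-stable $\cG_{K'}$-representations with Hodge–Tate weights in $[0,r]$. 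For part (3): given $\hat{\fM} = (\fM, \p_{\fM}, \hat{\cG}_{\fM})$, one constructs the comparison map $T_{\fS}^{\vee}(\fM) \to \hat{T}^{\vee}(\hat{\fM})$ by sending $f \colon \fM \to \fS^{\mathrm{ur}}$ to $\hat{\sR}\otimes_{\p,\fS}f$ composed with the inclusion $\hat{\sR}\otimes_{\p} \fS^{\mathrm{ur}} \hookrightarrow W(\cO_{C_p}^{\flat})$; injectivity is clear, and surjectivity (equivalently, that the map is an isomorphism of $\bZ_p$-modules of the same rank) follows by reducing mod $p$ and using that $\fS^{\mathrm{ur}}/p \to \cO_{C_p}^{\flat}$ and $\hat{\sR}/p \hookrightarrow \cO_{C_p}^{\flat}$ are suitably compatible, together with a length/rank count. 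The $\cG_{\infty}$-equivariance is a formal check once one recalls that the $\hat{\cG}$-action restricted to $\cH_{K'}$ is trivial on $\fM$, so $\cG_{\infty}$ acts only through the coefficients.

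For part (2): the injection $\hat{\iota}$ is obtained by base-changing the evaluation pairing $\hat{\fM} \times \hat{T}^{\vee}(\hat{\fM}) \to W(\cO_{C_p}^{\flat})$ along $\hat{\sR} \to W(\cO_{C_p}^{\flat})$, i.e.\ $\hat{\iota}(a\otimes m)(f) = a\cdot f(m)$ after $W(\cO_{C_p}^{\flat})$-linear extension; compatibility with Frobenius is immediate from $\p$-equivariance of $f$, and $\cG_{K'}$-equivariance from the definition $g(f)(x)=g(f(g^{-1}x))$. Injectivity is where one works: one wants that $W(\cO_{C_p}^{\flat})\otimes_{\hat{\sR}}\hat{\fM}$ is torsion-free and that the map is injective, which I would deduce from part (3) plus the analogous (known) injectivity statement for the Kisin module over $\fS^{\mathrm{ur}}$, namely $\fS^{\mathrm{ur}}\otimes_{\p,\fS}\fM \hookrightarrow \fS^{\mathrm{ur}}\otimes_{\bZ_p} T_{\fS}(\fM)$, base-changed to $W(\cO_{C_p}^{\flat})$. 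Being an isomorphism after inverting $p$ and $u$ (i.e.\ over $W(\T{Frac}\,\cO_{C_p}^{\flat})[\frac 1p]$) then upgrades to the stated injectivity integrally.

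The main content, and the main obstacle, is part (1): that $\hat{T}^{\vee}$ is essentially surjective and fully faithful onto lattices in semi-stable $\cG_{K'}$-representations. Full faithfulness I would get from part (3) (which identifies the underlying $\cG_{\infty}$-module functorially) together with the fact that a $\cG_{K'}$-morphism is determined by its restriction to $\cG_{\infty}$ since $K'_{\infty}/K'$ is dense enough — more precisely, $\cG_{\infty}$ is a subgroup whose fixed field generates, so Hom's of $\cG_{K'}$-reps inject into Hom's of $\cG_{\infty}$-reps, and one checks the $\hat{\cG}$-compatibility matches the extra $\cG_{K'}/\cG_{\infty}$-structure. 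Essential surjectivity is the hard direction: starting from a lattice $T$ in a semi-stable $\cG_{K'}$-representation, one produces $\fM$ from $T|_{\cG_{\infty}}$ by Kisin's construction, and the real work is to manufacture the $\hat{\cG}$-action on $\hat{\fM} = \hat{\sR}\otimes_{\p,\fS}\fM$ and to verify the three conditions in Definition 2.4 — commuting with $\p_{\hat{\fM}}$, triviality on $\hat{\fM}/I_+\hat{\fM}$, and $\fM \subset \hat{\fM}^{\cH_{K'}}$. For this I would transport the $\cG_{K'}$-action through the comparison isomorphism $\hat{\iota}$ of part (2): since $W(\cO_{C_p}^{\flat})\otimes_{\bZ_p}\hat{T}(\hat{\fM})$ carries a diagonal $\cG_{K'}$-action, and one shows this action preserves the $\hat{\sR}$-sublattice $\hat{\sR}\otimes_{\p}\fM$ inside it — this is the crux, and uses Lemma 2.3 (that $\hat{\sR}$ is $\cG_{K'}$-stable with action factoring through $\hat{\cG}$, and $\hat{\sR}/I_+ \cong W(k)$) to control both the stability and the triviality modulo $I_+$. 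The triviality mod $I_+$ should follow because modulo $I_+$ everything reduces to the unramified picture over $W(k)$ where $\cG_{K'}$ acts through its quotient $\T{Gal}(\bar k/k)$ matching the original crystalline/unramified structure, and $\hat{\cG}$ acts trivially there by construction. I expect the delicate point throughout to be checking that the constructed action genuinely lands in $\hat{\sR}$-coefficients rather than the larger $W(\cO_{C_p}^{\flat})$, which is exactly the role of the ring $\hat{\sR} = W(\cO_{C_p}^{\flat})\cap \sR_{K_0}$ and requires the interplay of the monodromy/$\sR_{K_0}$-side with the $W(\cO_{C_p}^{\flat})$-side.
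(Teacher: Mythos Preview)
The paper does not give its own proof of this theorem: it is stated with the citation ``(cf.\ \cite[Theorem 2.3.1, Proposition 3.1.3]{liu-semistable-lattice})'' and then used as a black box in the construction of $M_{\mathrm{st}}$. There is therefore no paper proof to compare your proposal against.

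That said, your outline is a reasonable reconstruction of the strategy in Liu's original paper. The order you chose --- establishing (3) first, then (2), then using both for (1) --- matches how the argument is organized there, and your identification of the crux (showing that the transported $\cG_{K'}$-action on $W(\cO_{C_p}^{\flat})\otimes_{\bZ_p}\hat{T}(\hat{\fM})$ preserves the sublattice $\hat{\sR}\otimes_{\p,\fS}\fM$, using the $\cG_{K'}$-stability of $\hat{\sR}$ and the structure of $\hat{\sR}/I_+$) is accurate. One point you glide over: for essential surjectivity, checking that the action lands in $\hat{\sR}$-coefficients is not purely formal from Lemma~2.3 alone; in Liu's paper this uses the monodromy operator on the Breuil module $S'\otimes_{\p,\fS}\fM$ and a computation showing the $\hat{\cG}$-action is given by $\exp(-\log(\underline{\epsilon}(g))\cdot N_\nabla)$ for a suitable $N_\nabla$, which is what forces the coefficients into $\sR_{K_0}$ (and hence into $\hat{\sR}$ after intersecting with $W(\cO_{C_p}^{\flat})$). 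Your plan gestures at this (``interplay of the monodromy/$\sR_{K_0}$-side with the $W(\cO_{C_p}^{\flat})$-side'') but would need that explicit formula to be made into a proof.
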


To construct the functor $M_{\mathrm{st}}$, we establish a connection between $(\p, \hat{\cG})$-modules and filtered $(\p, N, \Gamma)$-modules. Let $V \in \T{Rep}_{\bQ_p}^{\mathrm{pst}, K', r}$, and let $T \subset V$ be a $\cG_K$-stable $\bZ_p$-lattice. By Theorem \ref{thm:2.5}, there exists a unique $\fM \in \T{Mod}_{\fS}^r(\p, \hat{\cG})$ such that $\hat{T}^{\vee}(\hat{\fM}) = T$ as $\bZ_p[\cG_{K'}]$-modules. Let $\sD \coloneqq S'_{K_0}\otimes_{\p, \fS}\fM$ equipped with the Frobenius endomorphism given by $\p_{\sD} = \p_{S'_{K_0}}\otimes\p_{\fM}$. Let $D = \sD/(I_+S'_{K_0})\sD$, which is a finite $K_0$-vector space equipped with the Frobenius induced from $\p_{\sD}$. By \cite[Proposition 6.2.1.1]{breuil-representations}, there exists a unique section $s: D \arr \sD$ compatible with the Frobenius morphisms on both sides. Thus, $\sD = S'_{K_0}\otimes_{K_0} D$ if we identify $D$ with $s(D)$. So $B^+_{\mathrm{cris}}\otimes_{\hat{\sR}}\hat{\fM} \cong B^+_{\mathrm{cris}}\otimes_{K_0}D$, and the map $\hat{\iota}$ given in Theorem \ref{thm:2.5} (2) induces a natural injection $D \hookrightarrow B^+_{\mathrm{cris}}\otimes_{\bZ_p}T^{\vee}$ where $T^{\vee} \coloneqq \R{Hom}_{\bZ_p}(T, \bZ_p)$.

On the other hand, the functor $D_{\mathrm{st}}^{K'}$ induces an injection
\[
\iota: B^+_{\mathrm{st}}\otimes_{K_0} D_{\mathrm{st}}^{K'}(V) \arr B^+_{\mathrm{st}}\otimes_{\bQ_p} V^{\vee}
\]
such that $\iota$ is compatible with Frobenius, monodromy, filtration, and $\cG_{K'}$-action on both sides. The following is proved in \cite{liu-filteredmodule}.

\begin{prop} \emph{(cf. \cite[Proposition 2.6, Corollary 2.7, 2.8]{liu-filteredmodule})} \label{prop:2.6}
There exists a unique $K_0$-linear isomorphism $i: D_{\mathrm{st}}^{K'}(V) \arr D$ such that $i$ is compatible with the Frobenius morphisms on both sides and makes the following diagram commutative:\\

\begin{center}
\includegraphics[scale=1]{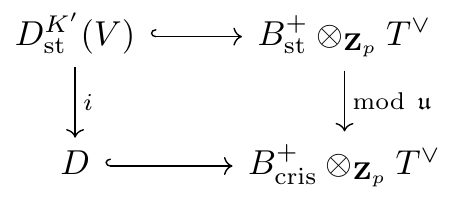}
\end{center}

\noindent Furthermore, such $i$ is functorial. 	
\end{prop}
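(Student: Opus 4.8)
The plan is to construct the isomorphism $i : D_{\mathrm{st}}^{K'}(V) \to D$ by first establishing it over a larger period ring and then descending. Both $D_{\mathrm{st}}^{K'}(V)$ and $D$ come equipped with Frobenius-compatible embeddings into $B^+_{\mathrm{st}} \otimes_{\bQ_p} V^\vee$: the first via $\iota$, and the second via the composite $D \hookrightarrow B^+_{\mathrm{cris}} \otimes_{\bZ_p} T^\vee \hookrightarrow B^+_{\mathrm{st}} \otimes_{\bQ_p} V^\vee$ coming from $\hat\iota$ of Theorem \ref{thm:2.5}(2). So the first step is to identify the images of these two embeddings after extending scalars, using that $B^+_{\mathrm{st}} \otimes_{\bQ_p} V^\vee \cong B^+_{\mathrm{st}} \otimes_{K_0} D_{\mathrm{st}}^{K'}(V)$ as $B^+_{\mathrm{st}}$-modules with all structures (the defining property of $D_{\mathrm{st}}^{K'}$), and similarly $B^+_{\mathrm{cris}} \otimes_{\hat{\sR}} \hat{\fM} \cong B^+_{\mathrm{cris}} \otimes_{K_0} D$ from the section $s$.

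The key point is that after base change to $B^+_{\mathrm{cris}}$ (or $B^+_{\mathrm{st}}$), both $B^+_{\mathrm{cris}} \otimes_{K_0} D$ and $B^+_{\mathrm{cris}} \otimes_{K_0} D_{\mathrm{st}}^{K'}(V)$ sit inside $B^+_{\mathrm{cris}} \otimes_{\bQ_p} V^\vee$ as the submodules generated by the respective spaces of Frobenius-compatible elements. More precisely, I would show that $D$, viewed inside $B^+_{\mathrm{cris}} \otimes_{\bQ_p} T^\vee$ via $\hat\iota$, is contained in $B^+_{\mathrm{st}} \otimes_{K_0} D_{\mathrm{st}}^{K'}(V)$, and then characterize $D$ as exactly the set of elements $x$ in that tensor product such that $\p(x) = x$ in an appropriate sense after projecting modulo $I_+$ — this uses the Breuil-style description of $D$ as the Frobenius-invariant section, combined with the fact that $D_{\mathrm{st}}^{K'}(V)$ is characterized inside $B_{\mathrm{st}} \otimes V^\vee$ as the $\cG_{K'}$-invariants. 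The comparison of Frobenius structures forces $i$ to exist and to be a $K_0$-linear isomorphism; the commutativity of the diagram (which I expect compares $\iota$ with the composite through $\hat\iota$ and the identification $\sD = S'_{K_0} \otimes_{K_0} D$) is then automatic from the construction, as is uniqueness since $i$ is determined by its effect after embedding into $B^+_{\mathrm{st}} \otimes_{\bQ_p} V^\vee$, where the embeddings are injective. Functoriality follows because every object in sight ($\fM$, $\sD$, $D$, the section $s$, the embeddings $\hat\iota$ and $\iota$) is functorial in $V$ and $T$, and $i$ is pinned down by these functorial data.

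I would carry this out in the following order. First, record the two Frobenius-equivariant embeddings into $B^+_{\mathrm{st}} \otimes_{\bQ_p} V^\vee$ and the two ``trivializations'' $B^+_{\mathrm{st}} \otimes_{K_0} D_{\mathrm{st}}^{K'}(V) \cong B^+_{\mathrm{st}} \otimes_{\bQ_p} V^\vee$ and $B^+_{\mathrm{cris}} \otimes_{K_0} D \cong B^+_{\mathrm{cris}} \otimes_{\hat{\sR}} \hat{\fM}$. Second, using Theorem \ref{thm:2.5}(2) and the section $s$ from \cite{breuil-representations}, show $D \subset B^+_{\mathrm{cris}} \otimes_{K_0} D_{\mathrm{st}}^{K'}(V)$ inside the common ambient module. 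Third, reduce modulo $I_+$ (using Lemma \ref{lem:2.3}(3), $\hat{\sR}/I_+ \cong W(k)$, $\sR_{K_0}/I_+\sR_{K_0} \cong K_0$): the projection $B^+_{\mathrm{cris}} \otimes_{\bZ_p} T^\vee \to W(\bar k)[\tfrac1p] \otimes_{\bZ_p} T^\vee$ via $\nu$ carries $D$ isomorphically onto its image, and carries $D_{\mathrm{st}}^{K'}(V)$ isomorphically onto the same image — this is where the equality of the two $K_0$-structures gets forced, and I would extract $i$ from it. Fourth, check compatibility with $\p$, then upgrade to compatibility with $N$, $\mathrm{Fil}$, and $\Gamma$ by transporting along $\iota$ and $\hat\iota$ (the filtration and monodromy on $D$ are defined precisely so that this works). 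Finally, deduce uniqueness and functoriality from injectivity of the embeddings.

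The main obstacle I anticipate is the third step: carefully matching the two $K_0$-rational structures on the $\nu$-reduction inside $W(\bar k)[\tfrac1p] \otimes_{\bZ_p} T^\vee$. One must verify that the Frobenius-fixed $K_0$-line structure coming from Breuil's section $s$ (which lives in $\sD = S'_{K_0} \otimes_{\p,\fS} \fM$) really does land in, and spans, the same $K_0$-subspace as $D_{\mathrm{st}}^{K'}(V)$ after the identifications — equivalently, that the comparison isomorphism $B^+_{\mathrm{cris}} \otimes_{\hat{\sR}} \hat{\fM} \cong B^+_{\mathrm{cris}} \otimes_{\bZ_p} \hat T(\hat{\fM})$ of Theorem \ref{thm:2.5}(2) is compatible, under $s$, with the $D_{\mathrm{st}}^{K'}$-comparison isomorphism. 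This is essentially a bookkeeping argument tracing through the constructions in \cite{liu-semistable-lattice} and \cite{liu-filteredmodule}, but it is the technical heart, and one has to be attentive to the twist by $\p$ in $\fM \mapsto \p^*\fM$ and to the distinction between $\hat{\sR}$, $S'$, and $B^+_{\mathrm{cris}}$.
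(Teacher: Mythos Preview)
The paper does not give its own proof of this proposition: it is stated with the citation ``(cf.\ \cite[Proposition 2.6, Corollary 2.7, 2.8]{liu-filteredmodule})'' and prefaced by ``The following is proved in \cite{liu-filteredmodule}'', with no proof environment following. So there is nothing in the present paper to compare your argument against; the result is simply imported from Liu's paper.

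That said, your sketch is broadly in line with the argument in \cite{liu-filteredmodule}. The strategy there is indeed to embed both $D$ and $D_{\mathrm{st}}^{K'}(V)$ into $B^+_{\mathrm{st}}\otimes_{\bQ_p} V^\vee$ via $\hat\iota$ and $\iota$ respectively, and then to identify them using the uniqueness of the Frobenius-compatible $K_0$-structure (Breuil's section). One point to be careful about in your third step: the claim that the projection via $\nu$ carries $D_{\mathrm{st}}^{K'}(V)$ isomorphically onto \emph{the same} image as $D$ is precisely the content to be proved, not an intermediate observation; in Liu's treatment this comes out of showing that $\hat\iota$, extended to $B^+_{\mathrm{st}}$, is compatible with Frobenius and $\cG_{K'}$-action, so that the image of $D$ lands in $(B^+_{\mathrm{st}}\otimes V^\vee)^{\cG_{K'}} \cap (B^+_{\mathrm{cris}}\otimes V^\vee) = D_{\mathrm{st}}^{K'}(V)$ after inverting $p$, and then a dimension count over $K_0$ forces the inclusion to be an equality. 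Your ``reduce mod $I_+$'' formulation is a reasonable way to package this, but the actual identification runs through $\cG_{K'}$-invariants rather than purely through $\nu$.
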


Note that 
\[
\fM/u\fM \cong \p^*(\fM)/u\p^*(\fM) \subset \sD/I_+S'_{K_0}\sD = D.
\]
We set $M_{\mathrm{st}}(T) \subset D_{\mathrm{st}}^{K'}(V)$ to be the inverse image of $\p^*(\fM)/u\p^*(\fM)$ under the isomorphism $i: D_{\mathrm{st}}^{K'}(V) \arr D$ given in Proposition \ref{prop:2.6}. $M_{\mathrm{st}}(T)$ is a finite free $W(k)$-lattice in $D_{\mathrm{st}}^{K'}(V)$ stable under Frobenius. Furthermore, it is proved in \cite[Corollary 2.12, Proposition 2.15]{liu-filteredmodule} that $M_{\mathrm{st}}(T)$ is stable under $\cG_K$-action and $N$ on $D_{\mathrm{st}}^{K'}(V)$. Thus, $M_{\mathrm{st}}(T)$ is a lattice of the filtered $(\p, N, \Gamma)$-module $D_{\mathrm{st}}^{K'}(V)$. And the association $M_{\R{st}}(\cdot)$ is a contravariant functor from $\R{Rep}_{\bZ_p}^{\mathrm{pst}, K', r}$ to $L^r(\p, N, \Gamma)$ since the isomorphism $i$ in Proposition \ref{prop:2.6} is functorial.

\subsection{Potentially Semi-stable Torsion Representations} \label{sec:2.3}

We now associate torsion filtered $(\p, N, \Gamma)$-modules to potentially semi-stable torsion representations. Denote by $\T{Rep}_{\mathrm{tor}}^{\mathrm{pst}, K', r}$ the category of torsion representations $L$ semi-stable over $K'$ and of height $r$, in a sense that there exist lattices $\sL_1, \sL_2 \in \T{Rep}_{\bZ_p}^{\mathrm{pst}, K', r}$ with a $\cG_K$-equivariant injection $j: \sL_1 \hookrightarrow \sL_2$ such that $L \cong \sL_2/j(\sL_1)$ as $\bZ_p[\cG_K]$-modules, and $L$ is killed by some power of $p$. Morphisms between two torsion representations in $\T{Rep}_{\mathrm{tor}}^{\mathrm{pst}, K', r}$ are morphisms of $\bZ_p[\cG_K]$-modules. We call such $(\sL_1, \sL_2, j)$ a \textit{lift} of $L$. We will sometimes denote simply by $j$ a lift of $L$. Note that a lift of $L \in \T{Rep}_{\mathrm{tor}}^{\mathrm{pst}, K', r}$ is not unique. Let $L, L' \in \T{Rep}_{\mathrm{tor}}^{\mathrm{pst}, K', r}$ with lifts $(\sL_1, \sL_2, j), (\sL_1', \sL_2', j')$ respectively. If $f: L \arr L'$ is a morphism in $\T{Rep}_{\mathrm{tor}}^{\mathrm{pst}, K', r}$, we say a morphism $\tilde{f}: \sL_2 \arr \sL_2'$ in $\T{Rep}_{\bZ_p}^{\mathrm{pst}, K', r}$ is a \textit{lift} of $f$ if $\tilde{f}(j(\sL_1)) \subset j'(\sL_1')$ and $\tilde{f}$ induces $f$.

We denote by $M_{\mathrm{tor}}^{\mathrm{fil}, r}(\p, N, \Gamma)$ the category whose objects are finite $W(k)$-modules $M$ killed by some power of $p$ and endowed with the following structures:
\begin{itemize}
\item a Frobenius semilinear morphism $\p: M \arr M$,
\item $W(k)$-linear map $N: M \arr M$ satisfying $N\p= p\p N$,
\item $W(k)$-linear $\Gamma$-action on $M$ which commutes with $\p$ and $N$, and
\item $M_{K'} \coloneqq \cO_{K'}\otimes_{W(k)}M$ has decreasing filtration by $\cO_{K'}$-submodules such that $\T{Fil}^0 M_{K'} = M_{K'}$ and $\T{Fil}^{r+1} M_{K'} = 0$. Also, $\gamma(\T{Fil}^i M_{K'}) \subset \T{Fil}^i M_{K'}$ for any $\gamma \in \Gamma$.	
\end{itemize}
  
Morphisms in $M_{\mathrm{tor}}^{\mathrm{fil}, r}(\p, N, \Gamma)$ are $W(k)$-linear maps compatible with above structures. For $L \in \T{Rep}_{\mathrm{tor}}^{\mathrm{pst}, K', r}$ with a lift $j: \sL_1 \hookrightarrow \sL_2$, we can associate an object $M_{\mathrm{st}, j}(L) \in M_{\mathrm{tor}}^{\mathrm{fil}, r}(\p, N, \Gamma)$ as follows. By Theorem \ref{thm:2.2}, we have the morphism $M_{\mathrm{st}}(j): M_{\mathrm{st}}(\sL_2) \arr M_{\mathrm{st}}(\sL_1)$ in $L^r(\p, N, \Gamma)$ corresponding to $j$, and $M_{\mathrm{st}}(j)$ is injective by \cite[Corollary 3.8]{liu-filteredmodule}. We set $M_{\mathrm{st}, j}(L) = M_{\mathrm{st}}(\sL_1)/M_{\mathrm{st}}(j)(M_{\mathrm{st}}(\sL_2))$. Then $M_{\mathrm{st}, j}(L)$ has natural endomorphisms $\p$ and $N$, and $\Gamma$-action induced from $M_{\mathrm{st}}(\sL_1)$. Furthermore, tensoring by $\cO_{K'}$ on $M_{\mathrm{st}}(j)$ gives the following exact sequence:
\[
0 \arr \cO_{K'}\otimes_{W(k)} M_{\mathrm{st}}(\sL_2) \arr \cO_{K'}\otimes_{W(k)} M_{\mathrm{st}}(\sL_1) \stackrel{q}{\arr} \cO_{K'}\otimes_{W(k)}M_{\mathrm{st}, j}(L) \arr 0.
\] 
We define the filtration on $M_{\mathrm{st}, j}(L)_{K'}$ by $\T{Fil}^i M_{\mathrm{st}, j}(L)_{K'} \coloneqq q(\T{Fil}^i M_{\mathrm{st}}(\sL_1)_{K'})$. This gives $M_{\mathrm{st}, j}(L)$ a structure as an object in $M_{\mathrm{tor}}^{\mathrm{fil}, r}(\p, N, \Gamma)$. By the snake lemma, we further have the following exact sequence of the associated graded modules:
\[
0 \arr \T{gr}^i(M_{\mathrm{st}}(\sL_2)_{K'}) \arr \T{gr}^i(M_{\mathrm{st}}(\sL_1)_{K'}) \arr \T{gr}^i(M_{\mathrm{st}, j}(L)_{K'}) \arr 0.
\]  
If $f: L \arr L'$ is a morphism in $\T{Rep}_{\mathrm{tor}}^{\mathrm{pst}, K', r}$ with a lift $\tilde{f}: (\sL_1, \sL_2, j) \arr (\sL_1', \sL_2', j')$, then it induces a morphism $M_{\mathrm{st}, \tilde{f}}(f): M_{\mathrm{st}, j'}(L') \arr M_{\mathrm{st}, j}(L)$ in $M_{\mathrm{tor}}^{\mathrm{fil}, r}(\p, N, \Gamma)$.

Note that the above construction depends on the choice of the lift of $L$. However, the following theorem, which can be deduced directly from \cite{liu-filtration-torsion} and \cite{liu-filteredmodule}, shows that the construction depends on lifts only up to a constant.

\begin{thm} \label{thm:2.7}
There exists a constant $c$ depending only on $F(u)$ and $r$ such that the following statement holds: for any morphism $f: L \arr L'$ in $\emph{Rep}_{\mathrm{tor}}^{\mathrm{pst}, K', r}$ with lifts $j, j'$ of $L, L'$ respectively, there exists a morphism $\tilde{h}: M_{\mathrm{st}, j'}(L') \arr M_{\mathrm{st}, j}(L)$ in $M_{\mathrm{tor}}^{\mathrm{fil}, r}(\p, N, \Gamma)$ such that
\begin{itemize}
\item if there exists a morphism of lifts $\tilde{f}: j \arr j'$ which lifts $f$, then $\tilde{h} = p^cM_{\mathrm{st}, \tilde{f}}(f)$,
\item let $f': L' \arr L''$ be a morphism in $\emph{Rep}_{\mathrm{tor}}^{\mathrm{pst}, K', r}$, $j''$ a lift of $L''$, and $\tilde{h}': M_{\mathrm{st}, j''}(L'') \arr M_{\mathrm{st}, j'}(L')$ the morphism in $M_{\mathrm{tor}}^{\mathrm{fil}, r}(\p, N, \Gamma)$ associated to $f', j'$, and $j''$. If there exists a morphism of lifts $\tilde{g}: j \arr j''$ which lifts $f' \circ f$, then $\tilde{h} \circ \tilde{h}' = p^{2c}M_{\mathrm{st}, \tilde{g}}(f' \circ f)$.
\end{itemize}
\end{thm}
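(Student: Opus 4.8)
The plan is to make the dependence of $M_{\mathrm{st}, j}(L)$ on the lift $j$ explicit by comparing any two lifts through a common third lift, and to track precisely the $p$-power discrepancy that arises. First I would recall from \cite{liu-filtration-torsion} and \cite{liu-filteredmodule} the key input: for $L \in \T{Rep}_{\mathrm{tor}}^{\mathrm{pst}, K', r}$ with two lifts $j_1: \sL_1 \hookrightarrow \sL_2$ and $j_1': \sL_1' \hookrightarrow \sL_2'$, one can always find a third lift dominating both (e.g. via pullback or pushout in the category of lattices), together with maps of lifts in each direction; after applying the contravariant functor $M_{\mathrm{st}}$ (which is exact on short exact sequences of lattices in the relevant sense, by \cite[Corollary 3.8]{liu-filteredmodule}) and passing to the torsion quotients, one obtains morphisms $M_{\mathrm{st}, j_1}(L) \to M_{\mathrm{st}, j_1'}(L)$ and back whose composites in either order are multiplication by $p^{c_0}$ for a constant $c_0$ depending only on $F(u)$ and $r$. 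This constant is exactly the one governing how far $M_{\mathrm{st}}$ fails to be ``integrally'' functorial, and it is the heart of the quantitative statement; it comes from the comparison between $M_{\mathrm{st}}(T)$ and the more naive integral structures, controlled by the ramification of $F(u)$ and the bound $r$ on Hodge–Tate weights.

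Next, given $f: L \to L'$ with chosen lifts $j$ of $L$ and $j'$ of $L'$, I would proceed as follows. The morphism $f$ need not lift to a morphism $j \to j'$ directly, but after replacing $j$ by a dominating lift $\tilde{j}$ (and $j'$ similarly if needed), there does exist a lift $\tilde{f}: \tilde{j} \to j'$ of $f$, giving a genuine morphism $M_{\mathrm{st}, \tilde{f}}(f): M_{\mathrm{st}, j'}(L') \to M_{\mathrm{st}, \tilde{j}}(L)$ in $M_{\mathrm{tor}}^{\mathrm{fil}, r}(\p, N, \Gamma)$. Composing with the comparison isomorphism-up-to-$p^{c_0}$ between $M_{\mathrm{st}, \tilde{j}}(L)$ and $M_{\mathrm{st}, j}(L)$ from the previous paragraph produces the desired $\tilde{h}: M_{\mathrm{st}, j'}(L') \to M_{\mathrm{st}, j}(L)$, at the cost of one extra factor of $p^{c_0}$. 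Setting $c = c_0$ (or a fixed small multiple, chosen once and for all to absorb both the change-of-lift comparison and a possible change of lift on the target side), one checks that when $f$ itself lifts to $\tilde{f}: j \to j'$ the construction gives $\tilde{h} = p^{c} M_{\mathrm{st}, \tilde{f}}(f)$, since in that case the dominating lift can be taken to be $j$ itself and the only discrepancy is the single normalizing factor.

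For the compositional compatibility, I would unwind the two constructions for $f$ and $f'$: each contributes one factor $p^{c}$, and the composite $\tilde{h} \circ \tilde{h}'$ carries $p^{2c}$ together with the composite of the two genuine functorial maps, which by functoriality of $M_{\mathrm{st}}$ on lifts equals $M_{\mathrm{st}, \tilde{g}}(f' \circ f)$ whenever a compatible lift $\tilde{g}: j \to j''$ of $f' \circ f$ exists; the point is that the choices of dominating lifts used for $f$ and for $f'$ can be arranged compatibly (take a lift dominating all of $j, j', j''$ and the relevant pullbacks), so that no further $p$-powers intervene beyond the two normalizing factors. The main obstacle I anticipate is the bookkeeping in this last step: ensuring that the auxiliary dominating lifts chosen independently for $f$ and $f'$ can be reconciled without introducing an uncontrolled third constant, and verifying that the constant $c_0$ extracted from \cite{liu-filtration-torsion} and \cite{liu-filteredmodule} is genuinely independent of $L$, of the chosen lifts, and of the base field data except through $F(u)$ and $r$. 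Once the change-of-lift lemma is stated cleanly with a uniform constant, the rest is a diagram chase.
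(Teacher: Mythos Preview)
Your proposal is reasonable and captures the mechanism behind the result, but note that the paper itself does not give an argument here: its proof consists of the single sentence ``It follows directly from \cite[Theorem 3.1]{liu-filteredmodule} and \cite[Theorem 2.1.3, Remark 2.1.5]{liu-filtration-torsion}.'' So there is no approach to compare against beyond the citations; what you have written is essentially a sketch of the content of those cited results (dominating/common lifts, the uniform $p$-power discrepancy controlled by $F(u)$ and $r$, and the resulting almost-functoriality), which is the correct picture.

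One caution on your plan: the point you flag as the main obstacle, namely reconciling the auxiliary dominating lifts for $f$ and $f'$ without an extra uncontrolled constant, is handled in Liu's papers not by a single global dominating lift but by the specific construction of the comparison maps at the level of Kisin modules and Breuil modules, where the constant is built into the comparison once and for all and is genuinely independent of the particular lifts; trying to redo this purely at the level of lattices in $\T{Rep}_{\bZ_p}^{\mathrm{pst}, K', r}$ may make the uniformity in $c$ harder to see. If you intend to actually write out a proof rather than cite, you would need to go through the integral comparison in \cite{liu-filtration-torsion} to extract the uniform constant, not just invoke an abstract dominating-lift argument.
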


\begin{proof}
It follows directly from \cite[Theorem 3.1]{liu-filteredmodule} and \cite[Theorem 2.1.3, Remark 2.1.5]{liu-filtration-torsion}.	
\end{proof}

The following corollary is immediate. 

\begin{cor} \emph{(cf. \cite[Corollary 3.2]{liu-filteredmodule}, \cite[Corollary 2.1.4]{liu-filtration-torsion})} \label{cor:2.8}
With notations as in Theorem \ref{thm:2.7}, assume that $f: L \arr L'$ is an isomorphism with the inverse $f^{-1}: L' \arr L$. Let $\tilde{h}_1: M_{\mathrm{st}, j}(L) \arr M_{\mathrm{st}, j'}(L')$ be the morphism as in Theorem \ref{thm:2.7} associated to $f^{-1}, j$, and $j'$. Then $\tilde{h} \circ \tilde{h}_1 = p^{2c}\mathrm{Id}$ on $M_{\mathrm{st}, j}(L)$ and $\tilde{h}_1 \circ \tilde{h} = p^{2c}\mathrm{Id}$ on $M_{\mathrm{st}, j'}(L')$. Furthermore, the similar statement holds for the induced morphisms on $\R{gr}^i(M_{\R{st}, j}(L)_{K'})$ and $\R{gr}^i(M_{\R{st}, j'}(L')_{K'})$.	
\end{cor}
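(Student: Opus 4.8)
The plan is to deduce the corollary directly from the composition law in the second bullet of Theorem \ref{thm:2.7}, applied to the pair of morphisms $f: L \arr L'$ and $f^{-1}: L' \arr L$.

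First I would record two elementary facts about the construction of Section \ref{sec:2.3}. The identity map $\mathrm{id}_j: j \arr j$ is a morphism of lifts lifting $\mathrm{id}_L$: it carries $j(\sL_1)$ into itself and induces $\mathrm{id}_L$ on the quotient. Moreover, since $M_{\mathrm{st}}$ is a functor (Theorem \ref{thm:2.2}) and the passage to the quotient defining $M_{\mathrm{st}, j}(L)$, together with its filtration, is functorial, we have $M_{\mathrm{st}, \mathrm{id}_j}(\mathrm{id}_L) = \mathrm{Id}$ on $M_{\mathrm{st}, j}(L)$, and likewise $M_{\mathrm{st}, \mathrm{id}_{j'}}(\mathrm{id}_{L'}) = \mathrm{Id}$ on $M_{\mathrm{st}, j'}(L')$.

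Next I would apply the composition law of Theorem \ref{thm:2.7} with $L'' = L$, $f' = f^{-1}$, and $j'' = j$. With this choice the morphism denoted $\tilde{h}'$ there — the one in $M_{\mathrm{tor}}^{\mathrm{fil}, r}(\p, N, \Gamma)$ associated to $f^{-1}$, $j'$, and $j''=j$ — is exactly the morphism $\tilde{h}_1: M_{\mathrm{st}, j}(L) \arr M_{\mathrm{st}, j'}(L')$ of the corollary. Since $f' \circ f = \mathrm{id}_L$ and $\tilde{g} \coloneqq \mathrm{id}_j: j \arr j$ is a morphism of lifts lifting $\mathrm{id}_L$, the composition law gives $\tilde{h} \circ \tilde{h}_1 = p^{2c} M_{\mathrm{st}, \mathrm{id}_j}(\mathrm{id}_L) = p^{2c}\,\mathrm{Id}$ on $M_{\mathrm{st}, j}(L)$. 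Running the same argument with the first morphism taken to be $f^{-1}: L' \arr L$ and the second morphism $f: L \arr L'$ (so that $f \circ f^{-1} = \mathrm{id}_{L'}$, using the morphism of lifts $\mathrm{id}_{j'}$) yields symmetrically $\tilde{h}_1 \circ \tilde{h} = p^{2c}\,\mathrm{Id}$ on $M_{\mathrm{st}, j'}(L')$.

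Finally, for the statement on graded modules, I would observe that the above are equalities of morphisms in $M_{\mathrm{tor}}^{\mathrm{fil}, r}(\p, N, \Gamma)$, in particular of filtration-compatible maps; applying the functorial formation of $\R{gr}^i((-)_{K'})$ to these identities, and using that $\R{gr}^i$ of an identity morphism of lifts is again an identity, produces the corresponding identities $p^{2c}\,\mathrm{Id}$ on $\R{gr}^i(M_{\R{st}, j}(L)_{K'})$ and on $\R{gr}^i(M_{\R{st}, j'}(L')_{K'})$. There is no real obstacle in this argument; the only point requiring care is bookkeeping with the indices in Theorem \ref{thm:2.7}, namely checking that the auxiliary morphism $\tilde{h}'$ in its composition clause coincides with $\tilde{h}_1$, and that the hypothesis on the existence of a morphism of lifts above $f' \circ f = \mathrm{id}$ is met by the identity lift.
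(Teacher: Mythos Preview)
Your argument is correct and is precisely the deduction the paper leaves implicit when it declares the corollary ``immediate'' from Theorem~\ref{thm:2.7}: apply the composition clause with $(f', j'') = (f^{-1}, j)$ and $\tilde g = \mathrm{id}_j$ to get $\tilde{h}\circ\tilde{h}_1 = p^{2c}\,\mathrm{Id}$, then swap roles, and finally pass to graded pieces by functoriality of $\mathrm{gr}^i$. Your bookkeeping identifying $\tilde{h}'$ with $\tilde{h}_1$ and using the identity lift of $\mathrm{id}_L$ is exactly right.
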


\subsection{Representation with Coefficient} \label{sec:2.4}

Let $A$ be a $\bZ_p$-algebra, and denote by $\R{Rep}_A^{\R{pst}, K', r}$ the subcategory of $\R{Rep}_{\bZ_p}^{\R{pst}, K', r}$ whose objects are $A$-modules such that $\cG_K$-actions are $A$-linear. Morphisms in $\R{Rep}_A^{\R{pst}, K', r}$ are morphisms of $A[\cG_K]$-modules. Let $\R{Rep}_{\R{tor}, A}^{\R{pst}, K', r}$ be the subcategory of $\R{Rep}_{\R{tor}}^{\R{pst}, K', r}$ whose objects have lifts in $\R{Rep}_A^{\R{pst}, K', r}$, and the morphisms are $A[\cG_K]$-module morphisms. For $L \in \R{Rep}_{\R{tor}, A}^{\R{pst}, K', r}$ having a lift $j: \sL_1 \hookrightarrow \sL_2$ in  $\R{Rep}_A^{\R{pst}, K', r}$, note that $M_{\R{st}}(\sL_1)$ and $M_{\R{st}}(\sL_2)$ are naturally $A\otimes_{\bZ_p}W(k)$-modules, and thus so is $M_{\R{st}, j}(L)$.

\begin{prop} \label{prop:2.9}
Let $f: L \arr L'$ be a morphism in $\R{Rep}_{\R{tor}, A}^{\R{pst}, K', r}$, and let $j$ and $j'$ be lifts in $\R{Rep}_A^{\R{pst}, K', r}$ of $L$ and $L'$ respectively. 	Then, the associated morphism $\tilde{h}: M_{\R{st}, j'}(L') \arr M_{\R{st}, j}(L)$ in  $M_{\mathrm{tor}}^{\mathrm{fil}, r}(\p, N, \Gamma)$ as in Theorem \ref{thm:2.7} is a morphism of $A\otimes_{\bZ_p}W(k)$-modules. 
\end{prop}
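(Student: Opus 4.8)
The plan is to reduce to $A$-linearity and then follow the $A\otimes_{\bZ_p}W(k)$-module structure through the construction of $\tilde h$, using the graph of $f$ to reduce to the case where $f$ lifts to a morphism of lifts over $A$. Since morphisms in $M_{\R{tor}}^{\R{fil},r}(\p,N,\Gamma)$ are $W(k)$-linear, $\tilde h$ is automatically $W(k)$-linear, so it suffices to show it is $A$-linear. The key point is that for $\sL\in\R{Rep}_A^{\R{pst},K',r}$ the $A\otimes_{\bZ_p}W(k)$-module structure on $M_{\R{st}}(\sL)$ is functorial: an element $a\in A$ acts as $M_{\R{st}}([a]_{\sL})$, where $[a]_{\sL}:\sL\arr\sL$ is multiplication by $a$. (This follows from the compatibility of $M_{\R{st}}$ with $D_{\R{st}}^{K'}$ in Theorem \ref{thm:2.2} and additivity of $M_{\R{st}}$.) Hence, if $g:\sL\arr\sL'$ is a morphism in $\R{Rep}_A^{\R{pst},K',r}$, applying $M_{\R{st}}$ to the identity $g\circ[a]_{\sL}=[a]_{\sL'}\circ g$ shows that $M_{\R{st}}(g)$ — and hence the map it induces on the corresponding torsion quotients — is $A\otimes_{\bZ_p}W(k)$-linear. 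In particular, by the first bullet of Theorem \ref{thm:2.7}, the proposition holds whenever $f$ admits a lift $\tilde f:j\arr j'$ lying in $\R{Rep}_A^{\R{pst},K',r}$.

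For general $f$ I would pass to the graph lift. Since $f$ is $A[\cG_K]$-linear, the graph $\Gamma_f=\{(x,f(x)):x\in L\}\subset L\oplus L'$ is an $A[\cG_K]$-submodule; writing $L\oplus L'=(\sL_2\oplus\sL_2')/(\sL_1\oplus\sL_1')$ and letting $\tilde{\sL}_2\subset\sL_2\oplus\sL_2'$ be the preimage of $\Gamma_f$, the pair $j_\Gamma=(\sL_1\oplus\sL_1'\hookrightarrow\tilde{\sL}_2)$ is a lift of $L$ in $\R{Rep}_A^{\R{pst},K',r}$ — indeed $\tilde{\sL}_2$ is an $A$-submodule, is $\cG_K$-stable, and is a $\bZ_p$-lattice in the potentially semi-stable representation $(\sL_2\oplus\sL_2')\otimes_{\bZ_p}\bQ_p$ with Hodge--Tate weights in $[0,r]$. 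The two projections restrict to $A$-linear morphisms of lifts $p_1:j_\Gamma\arr j$ lifting $\R{id}_L$ and $p_2:j_\Gamma\arr j'$ lifting $f$, and $\R{id}_L:(L,j_\Gamma)\arr(L,j)$ is an isomorphism. By the previous paragraph, the Theorem \ref{thm:2.7} morphisms attached to $(\R{id}_L,j_\Gamma,j)$ and to $(f,j_\Gamma,j')$ are $p^cM_{\R{st},p_1}(\R{id}_L):M_{\R{st},j}(L)\arr M_{\R{st},j_\Gamma}(L)$ and $p^cM_{\R{st},p_2}(f):M_{\R{st},j'}(L')\arr M_{\R{st},j_\Gamma}(L)$, both of which are $A\otimes_{\bZ_p}W(k)$-linear. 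In the construction of \cite{liu-filteredmodule} and \cite{liu-filtration-torsion}, the morphism $\tilde h$ attached to $(f,j,j')$ is obtained by composing $M_{\R{st},p_2}(f)$ with the comparison map $M_{\R{st},j_\Gamma}(L)\arr M_{\R{st},j}(L)$ relating the two lifts $j_\Gamma,j$ of $L$; by Corollary \ref{cor:2.8} the latter is a two-sided inverse, up to $p^{2c}$, of $p^cM_{\R{st},p_1}(\R{id}_L)$.

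Consequently, matters reduce to showing that this comparison map $M_{\R{st},j_\Gamma}(L)\arr M_{\R{st},j}(L)$ is $A\otimes_{\bZ_p}W(k)$-linear, and I expect this to be the main obstacle: Theorem \ref{thm:2.7} and Corollary \ref{cor:2.8} only characterize it as an inverse of $p^cM_{\R{st},p_1}(\R{id}_L)$ up to $p^{2c}$, which does not by itself force $A$-linearity. I would settle it by going into the construction in \cite{liu-filteredmodule} and \cite{liu-filtration-torsion} of the comparison maps relating $M_{\R{st}}$ for different lifts: these are built functorially out of $M_{\R{st}}$ and (possibly $p^c$-scaled) morphisms of lifts, so that when the two lifts of $L$ both lie in $\R{Rep}_A^{\R{pst},K',r}$ the entire construction takes place in the category of $A\otimes_{\bZ_p}W(k)$-modules — equivalently, one runs that construction with coefficients in $A\otimes_{\bZ_p}W(k)$ and invokes its compatibility with the ring homomorphism $\bZ_p\arr A\otimes_{\bZ_p}W(k)$. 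Granting this, $\tilde h$ is a composite of $A\otimes_{\bZ_p}W(k)$-linear maps, which is the assertion of the proposition.
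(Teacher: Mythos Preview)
Your reasoning is sound and arrives at essentially the same endpoint as the paper, which simply invokes \cite[Proposition~3.13]{liu-filteredmodule} and \cite[Lemma~4.2.4]{liu-filtration-torsion} directly: the $A$-linearity of $\tilde h$ is a feature of the actual construction in those references, and cannot be extracted from the formal properties listed in Theorem~\ref{thm:2.7} alone. Your graph-lift reduction is a pleasant way to organize the argument, but as you yourself note, it does not sidestep the core issue: you still need that the comparison map between two $A$-linear lifts of the same torsion object is $A\otimes_{\bZ_p}W(k)$-linear, and that is precisely what the cited results supply.

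One point to be careful about: you assert that ``the morphism $\tilde h$ attached to $(f,j,j')$ is obtained by composing $M_{\R{st},p_2}(f)$ with the comparison map $M_{\R{st},j_\Gamma}(L)\to M_{\R{st},j}(L)$.'' This is a claim about how Liu's construction actually proceeds, not something deducible from the statement of Theorem~\ref{thm:2.7}, which only asserts existence of $\tilde h$ with certain properties. Those properties do not pin $\tilde h$ down uniquely (an inverse-up-to-$p^{2c}$ relation does not force $A$-linearity, as you observe), so exhibiting some $A$-linear map with the same properties would not suffice. Your argument therefore really does depend on opening up the construction---at which point the graph-lift detour becomes unnecessary, since the cited lemmas verify $A$-linearity of Liu's $\tilde h$ directly.
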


\begin{proof}
It follows immediately from \cite[Proposition 3.13]{liu-filteredmodule} and \cite[Lemma 4.2.4]{liu-filtration-torsion}	.
\end{proof}

\section{Hodge-Tate Type and Galois Type} \label{sec:3}

\subsection{Hodge-Tate Type} \label{sec:3.1}

Let $E$ be a finite extension of $\bQ_p$, and let $B$ be a finite $E$-algebra. Let $V_B$ be a free $B$-module of rank $d$ equipped with a continuous $\cG_K$-action. Suppose that as a representation of $\cG_K$, $V_B$ is semi-stable over $K'$, i.e., $V_B \in \R{Rep}_{\bQ_p}^{\R{pst}, K'}$. Then $V_B$ is de Rham over $K$, and we set $D_{\R{dR}}^K(V_B) = (B_{\R{dR}}\otimes_{\bQ_p}V_B^{\vee})^{\cG_K}$. For any $E$-algebra $A$, we write $A_K \coloneqq A\otimes_{\bQ_p}K$.

\begin{lem} \emph{(cf. \cite[Lemma 4.1.2]{liu-filtration-torsion})} \label{lem:3.1}
\begin{enumerate}
\item Let $B'$ be a finite $B$-algebra, and write $V_{B'} = B'\otimes_B V_B$. Then $D_{\R{dR}}^K (V_{B'}) \cong B'\otimes_B D_{\R{dR}}^K (V_B)$, and $\R{gr}^i(D_{\R{dR}}^{K}(V_{B'})) \cong B'\otimes_B \R{gr}^i(D_{\R{dR}}^K (V_{B}))$.
\item $D_{\R{dR}}^K (V_{B})$ is a free $B_K$-module of rank $d$.	
\end{enumerate}
\end{lem}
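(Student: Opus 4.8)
The plan is to reduce everything to the classical fact that $D_{\R{dR}}^K(-)$ is exact on de Rham representations together with the compatibility of $B_{\R{dR}}$-periods with base change along finite extensions of the coefficient ring, and to bootstrap from the case $B = E$. First I would recall the general principle: if $V$ is a $d$-dimensional de Rham representation of $\cG_K$ over $\bQ_p$ with an $E$-linear action (so $V = V_E$ is free of rank $d/[E:\bQ_p]$ over $E$... more precisely, working with $V_B$ free of rank $d$ over $B$), then $D_{\R{dR}}^K(V_B) = (B_{\R{dR}}\otimes_{\bQ_p}V_B^\vee)^{\cG_K}$ is a module over $B_K = B\otimes_{\bQ_p}K$, and since $B_{\R{dR}}$ is a field with $\cG_K$-fixed subfield $K$ and $B_{\R{dR}}\otimes_{\bQ_p}V_B^\vee$ is a $B_{\R{dR}}$-vector space of dimension $d$ with semilinear $\cG_K$-action, the de Rham condition for $V_B$ over $\cG_K$ forces $D_{\R{dR}}^K(V_B)$ to be $K$-free of the appropriate dimension and the natural map $B_{\R{dR}}\otimes_K D_{\R{dR}}^K(V_B)\to B_{\R{dR}}\otimes_{\bQ_p}V_B^\vee$ to be an isomorphism (this is the usual argument with $\cG_K$-descent / Hilbert 90 for $B_{\R{dR}}$, exactly as in Fontaine's theory). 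The $B$-linearity of the $\cG_K$-action then makes all of this $B$-equivariant.

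For part (1), I would argue as follows. Since $B_{\R{dR}}$ is a $\bQ_p$-algebra and $V_{B'}^\vee = \R{Hom}_{B'}(V_{B'}, B') \cong B'\otimes_B V_B^\vee$ as $B'[\cG_K]$-modules, one has $B_{\R{dR}}\otimes_{\bQ_p}V_{B'}^\vee \cong B'\otimes_B (B_{\R{dR}}\otimes_{\bQ_p}V_B^\vee)$ compatibly with $\cG_K$-action, where $\cG_K$ acts trivially on $B'$. Taking $\cG_K$-invariants, I need $(B'\otimes_B M)^{\cG_K} \cong B'\otimes_B M^{\cG_K}$ for $M = B_{\R{dR}}\otimes_{\bQ_p}V_B^\vee$; this holds because $B'$ is finite (hence a finitely presented, and over a field even free/projective) $B$-module, so $B'\otimes_B(-)$ is exact and commutes with the left-exact functor $(-)^{\cG_K}$ of taking invariants — concretely, write $B'$ as a quotient of a free $B$-module and use the five lemma, or simply note $B'$ is $B$-projective since $B$ is a finite product of Artinian local rings... actually $B$ need not be semisimple, but $B'$ finite over $B$ suffices together with exactness of $(-)^{\cG_K}$ here, which is exactness because $D_{\R{dR}}^K$ is exact on de Rham representations and $B\to B'$ finite means $V_{B'}$ is still de Rham. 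The statement for $\R{gr}^i$ follows the same way: the filtration on $D_{\R{dR}}^K(V_{B'})_K = D_{\R{dR}}^K(V_{B'})$ is induced from that on $B_{\R{dR}}\otimes_{\bQ_p}V_{B'}^\vee$ via the $t$-adic (degree) filtration, which is manifestly compatible with $B'\otimes_B(-)$, and since each $\R{gr}^i(D_{\R{dR}}^K(V_B))$ is $K$-free (part (2) applied to graded pieces, or directly from de Rham-ness), tensoring the short exact sequences defining $\R{gr}^i$ by the flat $B$-module $B'$ stays exact.

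For part (2), I would first reduce to $B = E$: by part (1) with $B' = B$ viewed over... no — rather, I would prove it directly. We know $D_{\R{dR}}^K(V_B)$ is a $B_K$-module and the comparison isomorphism $B_{\R{dR}}\otimes_K D_{\R{dR}}^K(V_B)\xrightarrow{\sim} B_{\R{dR}}\otimes_{\bQ_p}V_B^\vee$ shows $D_{\R{dR}}^K(V_B)$ is $K$-free of dimension $d\cdot[K:\bQ_p]$ over $K$, hence a faithfully flat... finitely generated $B_K$-module. To get that it is \emph{free} of rank $d$ over $B_K$, I would localize: $B_K = B\otimes_{\bQ_p}K$ is a finite $K$-algebra, a product of Artinian local rings $B_{K,\fm}$; over each such local ring a finitely generated module of constant "length ratio" is free iff its rank is locally constant, and here the comparison isomorphism after $-\otimes_{B_K}B_{\R{dR}}$ (note $B_{\R{dR}}$ is a $B_K$-algebra via $K\subset B_{\R{dR}}$ and the structure map $B\to B_{\R{dR}}$) pins the rank: $\R{length}_{B_{K,\fm}}(D_{\R{dR}}^K(V_B)_\fm) = d\cdot \R{length}_{B_{K,\fm}}(B_{K,\fm})$ since both sides become $B_{\R{dR}}$-vector spaces of dimension $d$ after the appropriate base change. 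Alternatively and more cleanly, reduce to $B = E$ (where $B_K = E\otimes_{\bQ_p}K$ and $D_{\R{dR}}^K(V_E)$ free of rank $d$ is the standard fact, e.g. \cite{liu-filtration-torsion}) and then for general $B$ choose a surjection $E^{\oplus m}\twoheadrightarrow B$... this doesn't quite work since $B$ isn't free over $E$ as an algebra map. So I will go the local-ring route: decompose $B_K$ into Artinian local factors and check freeness factor by factor using that the rank after $\otimes B_{\R{dR}}$ is exactly $d$. The main obstacle is precisely this last point — promoting "finite projective of the right total $K$-dimension" to "free of rank $d$ over $B_K$" when $B_K$ is not a field; this is handled by the structure theory of finitely generated modules over Artinian local rings once we know the rank is constant, and constancy of rank is forced by the $B_{\R{dR}}$-comparison isomorphism from the classical de Rham theory.
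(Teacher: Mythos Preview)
Your argument for part (1) is essentially the standard one and matches the paper's citation of Liu. One small slip: you claim at the end that the graded-piece statement follows by tensoring with ``the flat $B$-module $B'$'', but $B'$ need not be $B$-flat (take $B=E[\epsilon]/(\epsilon^2)$, $B'=E$). The correct justification is the one you mention parenthetically: each $\R{Fil}^i D_{\R{dR}}^K$ and $\R{gr}^iD_{\R{dR}}^K$ is exact on de Rham representations, so the same finite-presentation argument you gave for $D_{\R{dR}}^K$ applies to $\R{gr}^i$.

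For part (2) your approach is genuinely different from the paper's, and the idea is sound, but the execution has two gaps. First, there is \emph{no} structure map $B\to B_{\R{dR}}$ in general: $B_{\R{dR}}$ is a field, while $B$ may have nilpotents, so the tensor ``$-\otimes_{B_K}B_{\R{dR}}$'' you write down does not make sense. What you really want is the faithfully flat base change $B_K\to B_K\otimes_K B_{\R{dR}}$; the comparison isomorphism
\[
D_{\R{dR}}^K(V_B)\otimes_{B_K}(B_K\otimes_K B_{\R{dR}})\;\cong\; V_B^\vee\otimes_{\bQ_p}B_{\R{dR}}
\]
is then an isomorphism of $B_K\otimes_K B_{\R{dR}}$-modules, and the right side is free of rank $d$ over that ring. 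Second, over an Artinian local ring the length equality $\R{length}(M_\fm)=d\cdot\R{length}(B_{K,\fm})$ does \emph{not} force freeness (e.g.\ $R=k[\epsilon]/(\epsilon^2)$, $M=k\oplus k$); you must first know $M$ is flat. Both issues are fixed simultaneously by faithfully flat descent from $B_K\otimes_K B_{\R{dR}}$: this gives that $D_{\R{dR}}^K(V_B)$ is finite projective of constant rank $d$ over $B_K$, hence free since $B_K$ is semilocal. (The side remark that $\dim_K D_{\R{dR}}^K(V_B)=d\cdot[K:\bQ_p]$ is also wrong; it is $d\cdot\dim_{\bQ_p}B$.)

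For comparison, the paper takes a more hands-on route that avoids descent entirely: it passes to $D_{\R{st}}^{K'}(V_B)$ (so the base ring is $B\otimes_{\bQ_p}K_0$), reduces via Nakayama and a dimension count to the case $B=B_{\R{red}}=\prod_j E_j$, and for each field factor $E_j$ observes that $E_j\otimes_{\bQ_p}K_0$ is a product of fields whose maximal ideals are permuted \emph{transitively} by the Frobenius $\p$ on $K_0$. Since $\p$ acts bijectively on $D_{\R{st}}^{K'}(V_{E_j})$, this transitivity forces the rank to be constant across the factors, hence the module is free. Your descent argument (once repaired) is cleaner and more conceptual; the paper's argument is more elementary and makes explicit use of the extra Frobenius structure available on $D_{\R{st}}$.
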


\begin{proof}
(1) is proved in \cite[Lemma 4.1.2]{liu-filtration-torsion}. For (2), since $D_{\R{dR}}^{K}(V_{B}) = K\otimes_{K_0}D_{\R{st}}^{K'}(V_{B})$	, it suffices to prove that $D_{\R{st}}^{K'}(V_{B})$ is a free $B\otimes_{\bQ_p}K_0$-module of rank $d$. For any finite $B$-algebra $B'$, we can show similarly as in (1) that $D_{\R{st}}^{K'}(V_{B'}) \cong B'\otimes_B D_{\R{st}}^{K'}(V_B)$. Let $B_{\R{red}} = B/\mathcal{N}(B)$ where $\cN(B)$ denotes the nilradical of $B$. $B_{\R{red}}$ is a reduced Artinian ring, so there exists a ring isomorphism $B_{\R{red}} \cong \prod_{j=1}^m E_j$ for some field $E_j$ finite over $E$. $E_j\otimes_{\bQ_p}K_0$ is isomorphic to a finite direct product of fields, so $D_{\R{st}}^{K'}(V_{E_j}) \cong E_j \otimes_{B} D_{\R{st}}^{K'}(V_{B})$ is finite projective as an $E_j\otimes_{\bQ_p}K_0$-module. Note that the Frobenius morphism on $K_0$ extends $E_j$-linearly to $E_j\otimes_{\bQ_p}K_0$, and the extended Frobenius permutes the maximal ideals of $E_j\otimes_{\bQ_p}K_0$ transitively. Therefore, $D_{\R{st}}^{K'}(V_{E_j})$ is a free $E_j\otimes_{\bQ_p}K_0$-module of rank $d$, and $D_{\R{st}}^{K'}(V_{B_{\R{red}}}) = B_{\R{red}}\otimes_B D_{\R{st}}^{K'}(V_B)$ is a free $B_{\R{red}}\otimes_{\bQ_p}K_0$-module of rank $d$.

Let $\{e_1, \ldots, e_d\}$ be a $B_{\R{red}}\otimes_{\bQ_p}K_0$-basis of $D_{\R{st}}^{K'}(V_{B_{\R{red}}})$, and choose a lift $\hat{e}_i \in D_{\R{st}}^{K'}(V_B)$ of $e_i$. By Nakayama's lemma, $\{\hat{e}_1, \ldots, \hat{e}_d\}$ generate $D_{\R{st}}^{K'}(V_B)$ as a $B\otimes_{\bQ_p}K_0$-module. Thus, we have a surjection of $B\otimes_{\bQ_p}K_0$-modules
\[
f: \bigoplus_{i=1}^d (B\otimes_{\bQ_p}K_0) \cdot \hat{e}_i \twoheadrightarrow D_{\R{st}}^{K'}(V_B).
\]
As a $K_0$-vector space, $\R{dim}_{K_0}D_{\R{st}}^{K'}(V_B) = d\cdot \R{dim}_{\bQ_p}B$. Thus, $f$ is an isomorphism, and $D_{\R{st}}^{K'}(V_{B})$ is a free $B\otimes_{\bQ_p}K_0$-module of rank $d$.  
\end{proof}

Let $D_E$ be a finite $E$-vector space such that $D_{E, K} \coloneqq D_E\otimes_{\bQ_p}K$ is equipped with a decreasing filtration $\R{Fil}^iD_{E, K}$ of $E\otimes_{\bQ_p}K$-modules and $\{i ~|~ \R{gr}^iD_{E, K} \neq 0\} \subset \{0, \ldots, r\}$. We denote $\bm{v} = (D_{E, K}, \{\R{Fil}^i D_{E, K}\}_{i = 0, \ldots, r})$. We say that $V_B$ has \textit{Hodge-Tate type} $\bm{v}$ if $\R{gr}^i D_{\R{dR}}^{K}(V_B) \cong B\otimes_E \R{gr}^i D_{E, K}$ as $B_{K}$-modules for all $i$.\\

\noindent\textit{Remark.} We can consider a Hodge-Tate type as a conjugacy class of Hodge-Tate cocharacter in the following way. The Hodge-Tate period ring is given by $B_{\R{HT}} = C_p[X, X^{-1}]$ where $\cG_K$ acts on $X$ via the cyclotomic character $\chi$. When $V_B$ is de Rham and thus Hodge-Tate, we have the isomorphism $\alpha_{\R{HT}}: D_{\R{HT}}^K (V_B)\otimes_K B_{\R{HT}} \stackrel{\cong}{\longrightarrow} V_B \otimes_{\bQ_p} B_{\R{HT}}$, and $\displaystyle D_{\R{HT}}^K (V_B) \cong \bigoplus_i \R{gr}^i D_{\R{dR}}^{K}(V_B)$ as graded $B\otimes_{\bQ_p} K$-modules. By base change of $\alpha_{\R{HT}}$ via the map $B_{\R{HT}} \rightarrow C_p$ given by $X \mapsto 1$, we have the isomorphism $D_{\R{HT}}^K (V_B)\otimes_K C_p \cong V_B \otimes_{\bQ_p} C_p$. This gives the grading on $V_B \otimes_{\bQ_p} C_p$ whose graded pieces are $B\otimes_{\bQ_p} K$-modules. Therefore, we get the induced map $\mathbf{G}_m \rightarrow \R{Res}_{B/\bQ_p} (\R{GL}_B (V_B))$ over $C_p$, and we can think of a Hodge-Tate type as a conjugacy class of Hodge-Tate cocharacter. 

When $K/\bQ_p$ is finite and $E$ contains the Galois closure of $K$, then we can also consider a Hodge-Tate type as induced from embeddings of $K$ into $E$, but this point of view does not work in our case where $K/\bQ_p$ is allowed to be infinite. Note also that Hodge-Tate type does not make sense if we allow both $B$ and $K$ to be infinite over $\bQ_p$, so we always assume $B/\bQ_p$ is finite.\\

\begin{lem} \label{lem:3.2}
For a finite $B$-algebra $B'$, $V_{B'}$ has Hodge-Tate type $\bm{v}$ if $V_B$ has Hodge-Tate type $\bm{v}$.	
\end{lem}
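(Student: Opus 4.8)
The plan is to chase the isomorphisms supplied by Lemma \ref{lem:3.1}(1) together with the defining isomorphism of the Hodge-Tate type $\bm{v}$; essentially no new input is needed.

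First I would check that $V_{B'} \coloneqq B'\otimes_B V_B$ fits into the setup of Section \ref{sec:3.1}: it is a free $B'$-module of rank $d$ with a continuous $\cG_K$-action, and since $B'$ is finitely generated as a $B$-module there is a surjection $V_B^{\oplus n}\twoheadrightarrow V_{B'}$ of $\cG_K$-representations for some $n$. Restricting to $\cG_{K'}$, the source is semi-stable, and quotients of semi-stable representations are semi-stable, so $V_{B'}|_{\cG_{K'}}$ is semi-stable; hence $V_{B'}\in \R{Rep}_{\bQ_p}^{\R{pst}, K'}$, the module $D_{\R{dR}}^K(V_{B'})$ is defined, and Lemma \ref{lem:3.1} applies.

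Next, by Lemma \ref{lem:3.1}(1) there is, for each $i$, an isomorphism $\R{gr}^i D_{\R{dR}}^K(V_{B'})\cong B'\otimes_B \R{gr}^i D_{\R{dR}}^K(V_B)$, and it is compatible with the $K$-module structures, i.e.\ it is $B'_K = B'\otimes_B B_K$-linear, because the $K$-action on both sides comes from $B_{\R{dR}}$, which is unchanged under base change along $B\to B'$. By hypothesis $V_B$ has Hodge-Tate type $\bm{v}$, so $\R{gr}^i D_{\R{dR}}^K(V_B)\cong B\otimes_E \R{gr}^i D_{E,K}$ as $B_K$-modules. Tensoring this with $B'$ over $B$ and composing gives, as $B'_K$-modules,
\[
\R{gr}^i D_{\R{dR}}^K(V_{B'}) \cong B'\otimes_B \bigl(B\otimes_E \R{gr}^i D_{E,K}\bigr) \cong B'\otimes_E \R{gr}^i D_{E,K}
\]
for every $i$, which is exactly the assertion that $V_{B'}$ has Hodge-Tate type $\bm{v}$.

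The only point requiring care — and hence the main (mild) obstacle — is ensuring that the isomorphism of Lemma \ref{lem:3.1}(1) is genuinely $B'_K$-linear rather than merely $B'$-linear or $K_0$-linear, so that the induced identifications on graded pieces are identifications of $E\otimes_{\bQ_p}K$-modules; once this equivariance is recorded, the remaining argument is a formal composition of isomorphisms of finite free modules.
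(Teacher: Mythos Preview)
Your proof is correct and is essentially the same as the paper's, which simply states that the lemma follows immediately from Lemma~\ref{lem:3.1}. You have just unpacked that sentence: the identification $\R{gr}^i D_{\R{dR}}^K(V_{B'}) \cong B'\otimes_B \R{gr}^i D_{\R{dR}}^K(V_B)$ from Lemma~\ref{lem:3.1}(1), combined with the defining isomorphism $\R{gr}^i D_{\R{dR}}^K(V_B)\cong B\otimes_E \R{gr}^i D_{E,K}$, gives the claim, and your remark about $B'_K$-linearity is the only point needing a moment's thought.
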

 
\begin{proof}
It follows immediately from 	Lemma \ref{lem:3.1}.
\end{proof}
 
The goal of this subsection is to prove the following theorem:

\begin{thm} \emph{(cf. \cite[Theorem 4.3.4]{liu-filtration-torsion})} \label{thm:3.3}
There exists a constant $c_1$ depending only on $K', r$, and $d$ such that the following statement holds:

Let $A$ and $A'$ be finite flat $\cO_E$-algebras and let $\rho: \cG_K \arr \R{GL}_d(A)$ and $\rho': \cG_K \arr \R{GL}_d(A')$ be representations such that $\rho \in \R{Rep}_{A}^{\R{pst}, K', r}$ and $\rho' \in \R{Rep}_{A'}^{\R{pst}, K', r}$. Suppose that there exist an ideal $I \subset A$ such that $A/I$ is killed by a power of $p$ and an $\cO_E$-linear surjection $\beta: A' \twoheadrightarrow A/I$ such that $A/I \otimes_A \rho \cong A/I \otimes_{\beta, A'} \rho'$ as $A[\cG_K]$-modules. Let $V$ be the free $A[\frac{1}{p}]$-module of rank $d$ equipped with the $\cG_K$-action corresponding to $\rho\otimes_{\bZ_p}\bQ_p$, and similarly let $V'$ be corresponding to $\rho'\otimes_{\bZ_p}\bQ_p$. If $I \subset p^{c_1}A$ and $V'$ has Hodge-Tate type $\bm{v}$, then $V$ also has Hodge-Tate type $\bm{v}$. 	
\end{thm}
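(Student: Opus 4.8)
The goal is to show that the Hodge-Tate type is "$p$-adically rigid": if two potentially semi-stable representations $V, V'$ (with coefficients in finite flat $\cO_E$-algebras $A, A'$) agree modulo a large enough power of $p$, then having Hodge-Tate type $\bm{v}$ for $V'$ forces the same for $V$. The natural strategy is to pass from the representations to the integral $p$-adic Hodge theory objects $M_{\mathrm{st}}$, where the filtration is visible, and to exploit the constant $c$ of Theorem~\ref{thm:2.7} together with Proposition~\ref{prop:2.9} (coefficient-compatibility). First I would set up the torsion representation $L = A/I \otimes_A \rho \cong A/I \otimes_{\beta, A'} \rho'$, which lies in $\R{Rep}_{\R{tor}, A/I}^{\R{pst}, K', r}$; choosing $\cG_K$-stable lattices $T \subset V$ and $T' \subset V'$ inside $\rho, \rho'$ gives lifts $j: T \xrightarrow{p^M} T$ of $L$ (for $I \subset p^M A$, i.e. $L$ is a quotient of $T/p^M T$) and likewise $j'$ on the $V'$ side, and these lifts can be taken inside $\R{Rep}_{A}^{\R{pst}, K', r}$, $\R{Rep}_{A'}^{\R{pst}, K', r}$ respectively.

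The key input is the functor $M_{\mathrm{st}}$ and the exact sequences from Section~\ref{sec:2.3}: for a lift $j$ of $L$ we get $M_{\mathrm{st}, j}(L)$ with filtration, and short exact sequences on the graded pieces $0 \to \R{gr}^i M_{\mathrm{st}}(\sL_2)_{K'} \to \R{gr}^i M_{\mathrm{st}}(\sL_1)_{K'} \to \R{gr}^i M_{\mathrm{st}, j}(L)_{K'} \to 0$. Applying Theorem~\ref{thm:2.7} to the identity morphism $L \to L$ with the two different lifts $j$ (coming from the $V$-side) and $j'$ (coming from the $V'$-side) produces morphisms $\tilde h, \tilde h_1$ between $M_{\mathrm{st}, j}(L)$ and $M_{\mathrm{st}, j'}(L)$ with $\tilde h \circ \tilde h_1 = p^{2c}\,\R{Id}$, $\tilde h_1 \circ \tilde h = p^{2c}\,\R{Id}$ (Corollary~\ref{cor:2.8}), and by Proposition~\ref{prop:2.9} these are $A/I \otimes_{\bZ_p} W(k)$-linear (after identifying the two $A/I$-structures via $\beta$), compatible with $\p$, $N$, $\Gamma$, \emph{and} the filtrations. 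The plan is then: (i) relate $\R{gr}^i M_{\mathrm{st}}(\sL_j)_{K'}$ to $\R{gr}^i D_{\R{dR}}^K(V)_{\cO_{K'}}$ — up to bounded $p$-power torsion the graded pieces of the lattice $M_{\mathrm{st}}$ recover the integral filtration on $D_{\R{dR}}$, with the bound depending only on $F(u)$ and $r$ (this is where the constant from Liu's comparison, via the section $s$ and the factor $\p^*(\fM)/u\p^*(\fM) \subset D$, enters); (ii) use the above to show that $\R{gr}^i M_{\mathrm{st}, j}(L)_{K'}$ and $\R{gr}^i M_{\mathrm{st}, j'}(L)_{K'}$ each compute $\R{gr}^i D_{\R{dR}}^K(-)$ of the mod-$I$ reduction up to bounded $p$-torsion, hence the hypothesis on $V'$ ("$V'$ has Hodge-Tate type $\bm v$", i.e. $\R{gr}^i D_{\R{dR}}^K(V') \cong A' \otimes_E \R{gr}^i D_{E,K}$) pins down $\R{gr}^i M_{\mathrm{st}, j'}(L)_{K'} \pmod{p^{\text{bdd}}}$ as $A/I \otimes_E \R{gr}^i D_{E,K}$ up to bounded torsion; (iii) transport this via $\tilde h, \tilde h_1$ to get $\R{gr}^i M_{\mathrm{st}, j}(L)_{K'}$, hence $\R{gr}^i D_{\R{dR}}^K(V)\!\pmod{p^{\text{bdd}}}$, of the prescribed shape; (iv) finally, if $I \subset p^{c_1}A$ with $c_1$ larger than all the accumulated bounds, a Nakayama/length-counting argument over the finite flat $\cO_E$-algebra $A$ — using that $D_{\R{dR}}^K(V)$ is free over $A_K$ of rank $d$ by Lemma~\ref{lem:3.1} and that $\R{gr}^i D_{\R{dR}}^K(V)$ is $A_K$-projective — upgrades the mod-$p^{\text{bdd}}$ isomorphism $\R{gr}^i D_{\R{dR}}^K(V) \otimes_A A/p^{c_1 - \text{bdd}} \cong (A/p^{c_1-\text{bdd}}) \otimes_E \R{gr}^i D_{E,K}$ to an honest $A_K$-isomorphism $\R{gr}^i D_{\R{dR}}^K(V) \cong A \otimes_E \R{gr}^i D_{E,K}$, which is exactly the assertion that $V$ has Hodge-Tate type $\bm v$.

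The constant $c_1$ is then assembled from: the constant $c$ of Theorem~\ref{thm:2.7} (depending on $F(u)$ and $r$), the comparison bound between $\R{gr}^i M_{\mathrm{st}}$ and the integral graded pieces of $D_{\R{dR}}$ (depending on $F(u)$ and $r$, hence on $K'$ and $r$), and an elementary-divisor bound needed to run the Nakayama step over rank-$d$ modules (this is where the dependence on $d$ comes in). All of these depend only on $K'$, $r$, $d$ — not on $A$, $A'$, $\rho$, $\rho'$ — which is precisely the uniformity claimed.

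**Main obstacle.** The delicate point is step (iv), and more precisely making step (i)–(iii) yield a bound \emph{uniform in $A$}. The two $A/I$-module structures on $L$ (one from $A$, one pulled back along $\beta$ from $A'$) must be carefully identified so that the morphisms $\tilde h, \tilde h_1$ are genuinely $A/I$-linear for the right structure; and the graded pieces $\R{gr}^i M_{\mathrm{st}}(\sL_j)_{K'}$ are $\cO_{K'}$-torsion-free but generally \emph{not} $A \otimes_{\bZ_p} W(k)$-free, only projective after inverting $p$, so one cannot naively do linear algebra over them integrally. The crux is therefore a bounded-torsion statement: showing that $\R{gr}^i M_{\mathrm{st}, j}(L)_{K'}$ differs from $(A/I)_{\cO_{K'}} \otimes_E \R{gr}^i D_{E,K}$ by a submodule and quotient killed by $p^{\text{(explicit, }K',r,d)}$, uniformly in $A$, and then a clean argument — essentially that a finitely generated module over the (possibly non-noetherian, but $p$-adically separated) ring $A$ which becomes free of the right rank after inverting $p$ and is free modulo a high power of $p$ is itself free — to conclude over $A$. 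Handling the non-noetherian case of $A$ finite flat over $\cO_E$ (which is allowed here, unlike Kisin's noetherian setting) is the technical heart, but since $A$ finite flat over $\cO_E$ is a finitely generated $\cO_E$-module, it is in fact noetherian, so this last worry dissolves; the real work remains the uniform bounded-torsion comparison between $M_{\mathrm{st}}$'s filtration and $D_{\R{dR}}$'s.
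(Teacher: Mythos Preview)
Your framework is correct up through step~(iii): setting up the torsion representation $L$, invoking Corollary~\ref{cor:2.8} and Proposition~\ref{prop:2.9} to obtain $\cO_E$-linear maps between $\R{gr}^i M_{\R{st},j}(L)_{K'}$ and $\R{gr}^i M_{\R{st},j'}(L)_{K'}$ composing to $p^{2c}$ --- this is exactly how the paper begins. But two key ideas are missing, and without them step~(iv) does not go through. First, the paper does \emph{not} attempt a Nakayama argument over a general finite flat $\cO_E$-algebra $A$; it immediately reduces (via Lemmas~\ref{lem:3.5}, \ref{lem:3.7}, \ref{lem:3.8}) to the case $A = \cO_E$ and $A'$ \emph{good}, so that after decomposing $E\otimes_{\bQ_p}K' \cong \prod_s T_s$ one is comparing modules over the DVRs $\cO_{T_s}$. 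This reduction is what makes any elementary-divisor reasoning possible, and it also introduces a second constant $c'$ (Lemma~\ref{lem:3.9}, Corollary~\ref{cor:3.10}) bounding the torsion created by the projection to each factor $\cO_{T_s}$; your bound involves only $c$.

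Second, and more seriously, even after this reduction your step~(iv) claims that the $p^{2c}$-isogeny of torsion modules yields an isomorphism $\R{gr}^i D_{\R{dR}}^K(V)\otimes_A A/p^{c_1-\text{bdd}} \cong (A/p^{c_1-\text{bdd}})\otimes_E \R{gr}^i D_{E,K}$, but this is not what (i)--(iii) produce. The module $\R{gr}^i_s M_{\R{st},j'}(L)_{K'}$ is a quotient of an $A'_s$-module whose integral structure you do not control (only its generic fibre is free of the right rank), so a naive length or Nakayama comparison across the $p^{2c}$-isogeny does not pin down $d_i = \dim_{T_s}\R{gr}^i_s D_{\R{dR}}^{K'}(V)$. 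The paper circumvents this with a genuinely different idea: it first proves only the coarse statement that $\R{gr}^0_s D_{\R{dR}}^{K'}(V) \neq 0 \Leftrightarrow \R{gr}^0_s D_{\R{dR}}^{K'}(V') \neq 0$ (using Lemma~\ref{lem:3.12} to produce elements of large $p$-order when the rank is positive), and then bootstraps to equality of all $d_i$ by applying this zero/nonzero criterion to the exterior powers $\chi\otimes\bigwedge^{t_1}V$ and $\chi\otimes\bigwedge^{t_1}V'$ for suitable $t_1$ and a crystalline twist $\chi$. This wedge-power trick is the missing engine of your step~(iv), and it is also the source of the factor $d$ in $c_1 = \tilde c(K', dr, d)$: one needs the height bound $dr$ because $\bigwedge^{t_1}V$ has Hodge--Tate weights in $[0, dr]$.
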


When $k$ is further assumed to be finite, Theorem \ref{thm:3.3} is proved in \cite[Theorem 4.3.4]{liu-filtration-torsion}. The proof in \cite{liu-filtration-torsion} is based on reducing to the case when $E$ contains the Galois closure of $K'$, and thus require $k$ to be finite. We remove such a restriction in the following.  

Since $E$ is a finite extension of $\bQ_p$, we have a ring isomorphism $E_{K} = E\otimes_{\bQ_p}K \cong \prod_{j=1}^s H_j$ for some fields $H_j$ finite over $K$. Note that each $H_j$ is an $E_{K}$-algebra via $E_{K} \cong \prod_{i=1}^s H_i \stackrel{q_j}{\arr} H_j$ where $q_j$ is the natural projection onto the $j$-th factor. For any $E_{K}$-module $M$, we write $M_j \coloneqq M\otimes_{E_{K}}H_j$. Then $M \cong \oplus_{j=1}^s M_j$. For a filtered $E_{K}$-module $D_{K}$, we denote $(\R{Fil}^iD_{K})_j$ and $(\R{gr}^i D_{K})_j$ by $\R{Fil}^i_j D_{K}$ and $\R{gr}^i_j D_{K}$ respectively. Since any finite $E_{K}$-module is projective, we have $\R{gr}^i_j D_{K} \cong \R{Fil}^i_j D_{K} / \R{Fil}^{i+1}_j D_{K}$. Write $B_{H_{j}} \coloneqq B \otimes_{E} H_j$.

\begin{lem} \emph{(cf. \cite[Lemma 4.1.4]{liu-filtration-torsion})} \label{lem:3.4}
With notations as above, $V_B$ has Hodge-Tate type $\bm{v}$ if and only if $\R{gr}^i_j D_{\R{dR}}^{K}(V_B)$ is $B_{H_j}$-free and $\R{rank}_{B_{H_j}} \R{gr}^i_j D_{\R{dR}}^{K}(V_B) = \R{dim}_{H_j}\R{gr}^i_j D_{E, K}$ for all $j = 1, \ldots, s$ and $i \in \bZ$.	
\end{lem}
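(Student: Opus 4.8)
The plan is to reduce the definition of Hodge-Tate type, which is phrased in terms of the $B_K$-modules $\R{gr}^i D_{\R{dR}}^K(V_B)$, to a statement about each of the finitely many factors $H_j$ of $E_K = E \otimes_{\bQ_p} K$, where the base field $K$ no longer intervenes in an essential way. The point is that $E_K$ is a product of fields $H_1, \dots, H_s$ finite over $K$, so every finite $E_K$-module decomposes canonically as a direct sum $M = \bigoplus_{j=1}^s M_j$ with $M_j = M \otimes_{E_K} H_j$, and likewise $B \otimes_{\bQ_p} K = \bigoplus_j B_{H_j}$. Applying this decomposition to $D_{\R{dR}}^K(V_B)$ and to $D_{E,K}$ and using exactness of $-\otimes_{E_K} H_j$ (each $H_j$ is flat, indeed projective, over $E_K$), the formation of $\R{Fil}^i$ and $\R{gr}^i$ commutes with passage to the $j$-th factor, so $\R{gr}^i D_{\R{dR}}^K(V_B) \cong \bigoplus_j \R{gr}^i_j D_{\R{dR}}^K(V_B)$ and similarly for $D_{E,K}$.

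First I would record the elementary module-theoretic fact that for a finite $B_K$-module $P$ (with $B_K = B \otimes_{\bQ_p} K = \bigoplus_j B_{H_j}$) and a finite $E_K$-module structure coming from a map $E_K \to B_K$ compatible with the factorizations, one has $P \cong B \otimes_E Q$ as $B_K$-modules for a given $E_K$-module $Q$ if and only if $P_j \cong B_{H_j} \otimes_E Q$... — more precisely, if and only if each $P_j$ is $B_{H_j}$-free of rank equal to $\dim_{H_j} Q_j$. The ``only if'' direction is immediate by applying $-\otimes_{E_K} H_j$. For the ``if'' direction, note that $B_{H_j} = B \otimes_E H_j$, and over a field $H_j$ any module of the form $B \otimes_E Q_j$ is determined up to isomorphism by its $H_j$-dimension $\dim_E B \cdot \dim_{H_j} Q_j$; since $B_{H_j}$ is finite over the field $H_j$, a finite free $B_{H_j}$-module is determined by its rank, and a free $B_{H_j}$-module of rank $\dim_{H_j} Q_j$ is isomorphic to $B_{H_j} \otimes_E Q_j$ after identifying $Q_j \cong Q \otimes_E H_j$ appropriately (using that $Q$ is a finite-dimensional $E$-vector space). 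Assembling over $j$ then gives $P \cong B \otimes_E Q$.

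Applying this with $P = \R{gr}^i D_{\R{dR}}^K(V_B)$ and $Q = \R{gr}^i D_{E,K}$ yields exactly the claimed equivalence: $V_B$ has Hodge-Tate type $\bm v$, i.e. $\R{gr}^i D_{\R{dR}}^K(V_B) \cong B \otimes_E \R{gr}^i D_{E,K}$ as $B_K$-modules for all $i$, if and only if for all $i$ and all $j$ the module $\R{gr}^i_j D_{\R{dR}}^K(V_B)$ is $B_{H_j}$-free of rank $\dim_{H_j} \R{gr}^i_j D_{E,K}$. One should double-check that the $E_K$-module and $B_K$-module structures on $\R{gr}^i D_{\R{dR}}^K(V_B)$ are compatible with the respective product decompositions, which is automatic because the structure map $E \to B$ induces $E_K \to B_K$ respecting the idempotents coming from $E_K$ (the idempotents of $B_K$ refine those of $E_K$). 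I expect the main — though still modest — obstacle to be bookkeeping: verifying that forming graded pieces, the product decomposition over the factors $H_j$, and base change along $B \to B'$ (as in Lemma \ref{lem:3.1}) all commute with one another, and checking the rank equality $\R{rank}_{B_{H_j}} \R{gr}^i_j D_{\R{dR}}^K(V_B) = \dim_{H_j} \R{gr}^i_j D_{E,K}$ is the right translation of the isomorphism condition rather than a weaker statement. Since this is a straightforward analogue of \cite[Lemma 4.1.4]{liu-filtration-torsion} with $E_K$ in place of a product of copies of $E$, the argument is essentially formal once the decomposition into factors is set up.
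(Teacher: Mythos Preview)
Your proposal is correct and is exactly the argument the paper has in mind: the paper's proof simply cites \cite[Lemma 4.1.4]{liu-filtration-torsion}, and what you have written is a direct unwinding of that argument in the present setting, using the product decomposition $E_K \cong \prod_j H_j$ (hence $B_K \cong \prod_j B_{H_j}$) together with the observation that $B\otimes_E \R{gr}^i_j D_{E,K} \cong B_{H_j}^{\dim_{H_j}\R{gr}^i_j D_{E,K}}$ as $B_{H_j}$-modules. The only cosmetic point is that your sentence about $B\otimes_E Q_j$ being ``determined by its $H_j$-dimension'' is really the statement that it is free over $B_{H_j}$ of rank $\dim_{H_j}Q_j$, which you state correctly immediately afterward.
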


\begin{proof}
This follows from the same argument as in the proof of  \cite[Lemma 4.1.4]{liu-filtration-torsion}.	
\end{proof}

$B_{\R{red}} = B/\cN(B)$ is a reduced Artinian $E$-algebra, so $B_{\R{red}} \cong \prod_{l=1}^m E_l$ for some field $E_l$ finite over $E$. We set $V_{E_l} = E_l\otimes_B V_B$.

\begin{lem} \emph{(cf. \cite[Proposition 4.1.5]{liu-filtration-torsion})} \label{lem:3.5}
$V_B$ has Hodge-Tate type $\bm{v}$ if and only if $V_{E_l}$ has Hodge-Tate type $\bm{v}$ for each $l= 1, \ldots, m$.	
\end{lem}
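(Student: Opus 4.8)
The plan is to reduce the Hodge-Tate type condition, which by Lemma \ref{lem:3.4} is a statement about the $B_{H_j}$-freeness and ranks of the graded pieces $\R{gr}^i_j D_{\R{dR}}^K(V_B)$, to the corresponding statement over each reduced factor $E_l$. The crucial input is Lemma \ref{lem:3.1}(1), which gives $\R{gr}^i D_{\R{dR}}^K(V_{E_l}) \cong E_l \otimes_B \R{gr}^i D_{\R{dR}}^K(V_B)$, and hence after the idempotent decomposition $\R{gr}^i_j D_{\R{dR}}^K(V_{E_l}) \cong E_l \otimes_B \R{gr}^i_j D_{\R{dR}}^K(V_B) = (E_l)_{H_j} \otimes_{B_{H_j}} \R{gr}^i_j D_{\R{dR}}^K(V_B)$, where $(E_l)_{H_j} = E_l \otimes_E H_j$ is a finite product of fields finite over $H_j$.

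First I would prove the forward implication, which is immediate: if $\R{gr}^i_j D_{\R{dR}}^K(V_B)$ is $B_{H_j}$-free of the prescribed rank, then base-changing along $B_{H_j} \to (E_l)_{H_j}$ shows $\R{gr}^i_j D_{\R{dR}}^K(V_{E_l})$ is $(E_l)_{H_j}$-free, and since $(E_l)_{H_j}$ is a product of fields the rank is preserved on each factor; by Lemma \ref{lem:3.4} applied with $E_l$ in place of $E$ (noting that the target Hodge-Tate type $\bm v$ has the same graded dimensions $\R{dim}_{H_j}\R{gr}^i_j D_{E,K}$ regardless of the coefficient field), $V_{E_l}$ has Hodge-Tate type $\bm v$. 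For the converse, assume each $V_{E_l}$ has Hodge-Tate type $\bm v$. Set $M = \R{gr}^i_j D_{\R{dR}}^K(V_B)$, a finite $B_{H_j}$-module. I would first pass to $B_{\R{red}}$: the point is that $\cN(B) \cdot M$ need not vanish, so instead I would argue that $M$ is already $B_{H_j}$-free by a Nakayama/length argument. We know from Lemma \ref{lem:3.1}(2) that $D_{\R{dR}}^K(V_B)$ is $B_K$-free of rank $d$, hence $\bigoplus_i \R{gr}^i D_{\R{dR}}^K(V_B)$ and each $M$ are at least finitely generated over the Artinian local-ish ring $B_{H_j}$; comparing $H_j$-dimensions, $\sum_i \R{dim}_{H_j} \R{gr}^i_j D_{\R{dR}}^K(V_B) = d \cdot \R{dim}_{H_j} B_{H_j}/(\text{something})$—more precisely $\R{dim}_{H_j} D_{\R{dR}}^K(V_B)_j = d\, \R{dim}_{\bQ_p} B$ forces, once each graded piece over $B_{\R{red},H_j}$ has the right rank, that $M$ is free over $B_{H_j}$ of the right rank by the standard fact that a finite module over a (product of) Artinian local ring whose generator count equals its length divided by the residue degree times the base length must be free.

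Concretely, the mechanism I would use: over the reduced quotient, $M/\cN(B)M = \R{gr}^i_j D_{\R{dR}}^K(V_{B_{\R{red}}})$ is $B_{\R{red},H_j}$-free of rank $\R{dim}_{H_j}\R{gr}^i_j D_{E,K}$ by the hypothesis on the $V_{E_l}$ together with Lemma \ref{lem:3.4} and the transitivity $V_{E_l} = E_l \otimes_{B_{\R{red}}} V_{B_{\R{red}}}$; lifting a basis and applying Nakayama gives a surjection $B_{H_j}^{\oplus n} \twoheadrightarrow M$ with $n = \R{dim}_{H_j}\R{gr}^i_j D_{E,K}$. Summing over $i$ gives a surjection $B_{H_j}^{\oplus d} \twoheadrightarrow \bigoplus_i \R{gr}^i_j D_{\R{dR}}^K(V_B) \cong D_{\R{dR}}^K(V_B)_j$ (using that the filtration splits as graded $E_K$-modules, every finite $E_K$-module being projective); but the right side is $B_{H_j}$-free of rank $d$ by Lemma \ref{lem:3.1}(2), so a surjection between finite free $B_{H_j}$-modules of the same rank over a Noetherian (indeed Artinian) ring is an isomorphism, which forces each $B_{H_j}^{\oplus n} \twoheadrightarrow M$ to be an isomorphism as well. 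By Lemma \ref{lem:3.4}, $V_B$ has Hodge-Tate type $\bm v$.

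The main obstacle I anticipate is bookkeeping the interaction between the two idempotent decompositions—the one of $E_K = \prod_j H_j$ on the "geometric" side and the one of $B_{\R{red}} = \prod_l E_l$ on the "coefficient" side—and making sure that the freeness statement one extracts from $\bigoplus_i M$ being free of total rank $d$ actually propagates to each individual graded piece $M$. This is exactly where the argument in \cite{liu-filtration-torsion} invoked $E \supseteq$ the Galois closure of $K'$ to split everything into one-dimensional pieces indexed by embeddings; the replacement here is the ring-theoretic observation that a surjection of finite free modules of equal rank over an Artinian ring is an isomorphism, which lets the "total rank is correct" statement force each summand to be free without needing the embeddings to be split. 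I would need to be careful that the comparison of $H_j$-dimensions is genuinely an equality and not merely an inequality, but this follows from Lemma \ref{lem:3.1}(2) for both $V_B$ and the reduced pieces.
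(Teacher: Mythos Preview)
Your argument is correct and is essentially the standard one: use Lemma~\ref{lem:3.1}(1) to pass to $B_{\R{red}}$ on graded pieces, apply Lemma~\ref{lem:3.4} to each $E_l$ to get freeness of the reduced graded pieces, lift generators via Nakayama, and conclude by a dimension count against the free module $D_{\R{dR}}^K(V_B)_j$ of Lemma~\ref{lem:3.1}(2). The paper gives no independent proof---it simply invokes \cite[Proposition~4.1.5]{liu-filtration-torsion}---and your reconstruction matches what that citation covers.

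Two small remarks. First, the isomorphism $\bigoplus_i \R{gr}^i_j D_{\R{dR}}^K(V_B) \cong D_{\R{dR}}^K(V_B)_j$ you use is only an isomorphism of $H_j$-vector spaces, not of $B_{H_j}$-modules (the filtration need not split $B$-linearly). This does not damage the argument: what you actually need is that the $B_{H_j}$-linear surjection $B_{H_j}^{\oplus d} \twoheadrightarrow \bigoplus_i \R{gr}^i_j D_{\R{dR}}^K(V_B)$ is between $H_j$-vector spaces of equal dimension, and that equality follows from the $H_j$-linear splitting together with Lemma~\ref{lem:3.1}(2). Second, your closing worry that \cite{liu-filtration-torsion} needed $E$ to contain the Galois closure of $K'$ at this step is misplaced: the paper explicitly says the \emph{same} argument as in \cite[Proposition~4.1.5]{liu-filtration-torsion} suffices here. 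The Galois-closure reduction in \cite{liu-filtration-torsion} enters later, in the proof of the analogue of Theorem~\ref{thm:3.3}, not in this lemma.
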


\begin{proof}
This follows from the same argument as in the proof of \cite[Proposition 4.1.5]{liu-filtration-torsion}.	
\end{proof}

The following lemma is useful when we consider an extension of the coefficient field $E$.

\begin{lem} \label{lem:3.6}
Let $H$ be a field, and let $C$ be a field (possibly of an infinite degree) over $H$. Let $H'$ be a finite extension of $H$, and let $R$ and $T$ be finite extensions of $H'$. If $M$ is a $C\otimes_H R$-module such that $M\otimes_{H'} T$ is a finite free $C\otimes_H R\otimes_{H'} T$-module, then $M$ is finite free over $C\otimes_H R$.	
\end{lem}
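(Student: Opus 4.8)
The plan is to reduce the statement about $C \otimes_H R$-modules to a statement about modules over a product of local Artinian rings, where freeness can be detected after faithfully flat base change. First I would observe that $C \otimes_H R$ is a (possibly infinite-dimensional over $C$, but finite over $C \otimes_H H'$) commutative ring that decomposes as a finite product $C \otimes_H R \cong \prod_{a} R_a$ of local Artinian rings $R_a$, since $R$ is a finite extension of $H'$ and thus $C \otimes_H R$ is a finite $C \otimes_H H'$-algebra; more to the point, each connected component is a complete local Noetherian ring with residue field a finite extension of $C \otimes_H H'$ modulo a maximal ideal. Correspondingly $M \cong \prod_a M_a$ with $M_a$ an $R_a$-module, and it suffices to show each $M_a$ is free. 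The module $T$ enters as a faithfully flat extension: $C \otimes_H R \otimes_{H'} T$ is faithfully flat over $C \otimes_H R$ because $T$ is a nonzero finite extension of $H'$, hence free and faithfully flat over $H'$, and base change along $H' \to T$ is the operation $- \otimes_{H'} T$.

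The key steps, in order, are: (1) record that $C \otimes_H R$ is a finite product of local Artinian rings, so that "finite free" and "finite projective" coincide for finitely generated modules over it, and projectivity (equivalently freeness) can be checked by faithfully flat descent; (2) check that $-\otimes_{H'} T$ is faithfully flat on $C \otimes_H R$-modules, since $H' \to T$ is faithfully flat and this base change commutes with the structure maps; (3) note that $M$ is finitely generated over $C \otimes_H R$: this follows because $M \otimes_{H'} T$ is finite free, hence finitely generated over $C \otimes_H R \otimes_{H'} T$, and $C \otimes_H R \to C \otimes_H R \otimes_{H'} T$ is faithfully flat, so a generating set of $M \otimes_{H'} T$ that comes from finitely many elements of $M$ (choose preimages, using that $T$ is finite over $H'$ so one can clear denominators) generates $M$ by Nakayama-type faithfully flat descent of surjectivity; (4) conclude by faithfully flat descent of projectivity: a finitely presented module over a Noetherian ring is projective if and only if it becomes projective after a faithfully flat base change, and $M \otimes_{H'} T$ is free hence projective, so $M$ is projective, hence finite free since $C \otimes_H R$ is a product of local rings.

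The main obstacle I expect is step (3), establishing that $M$ is finitely generated (equivalently finitely presented) over $C \otimes_H R$ in the first place — the hypothesis only gives finiteness of $M \otimes_{H'} T$, and one must descend the finite generation along the faithfully flat but non-surjective-on-Spec-in-an-obviously-useful-way map $C \otimes_H R \to C \otimes_H R \otimes_{H'} T$. The clean way is: pick $T$-module (equivalently $C \otimes_H R \otimes_{H'} T$-module) generators of $M \otimes_{H'} T$; since $T$ is finite over $H'$, say of degree $t$, and $M \otimes_{H'} T = M \oplus \cdots \oplus M$ as $C \otimes_H R$-modules (non-canonically, via an $H'$-basis of $T$), each generator is a finite $C \otimes_H R$-combination of elements of $M$; collecting the finitely many elements of $M$ that appear, they generate $M \otimes_{H'} T$ over $C \otimes_H R \otimes_{H'} T$, hence the cokernel of the induced map $(C\otimes_H R)^{\oplus n} \to M$ vanishes after $-\otimes_{H'}T$, hence vanishes by faithful flatness; so $M$ is finitely generated. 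Finite presentation then follows since $C \otimes_H R$ is Noetherian, and step (4) applies. An alternative route for step (4) that avoids citing general faithfully flat descent of projectivity is to use the structure $C \otimes_H R = \prod_a R_a$ directly: freeness of $M_a$ over the local Artinian ring $R_a$ is equivalent to equality of $\dim$ of $M_a / \fm_{R_a} M_a$ with the "generic rank," a numerical condition stable under faithfully flat base change, which reduces everything to dimension counting over residue fields.
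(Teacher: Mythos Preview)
Your approach is correct and takes a genuinely different route from the paper. The paper's argument is more hands-on: it observes that since $H' \hookrightarrow T$ splits as an $H'$-module map, the inclusion $M \hookrightarrow M\otimes_{H'}T$ has a $C\otimes_H R$-linear retraction $f$, so $M$ is a direct summand of $M\otimes_{H'}T$; the latter is finite free over $C\otimes_H R\otimes_{H'}T$ and hence over $C\otimes_H R$, so $M$ is finite projective over $C\otimes_H R$ with no descent machinery invoked. The paper then pushes a $(C\otimes_H R\otimes_{H'}T)$-basis $e_1,\dots,e_n$ of $M\otimes_{H'}T$ down to $f(e_1),\dots,f(e_n)\in M$ and compares $C$-dimensions to conclude. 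Your argument instead exploits that $C\otimes_H R$ is Artinian and runs faithfully flat descent along $H'\to T$ for both finite generation and projectivity; this is more structural and would adapt to settings where no ambient field $C$ is available for a dimension count, at the cost of citing heavier lemmas.

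One point to tighten in your steps (1) and (4): over a \emph{product} of local rings, ``finite projective'' and ``finite free'' do \emph{not} coincide (e.g.\ $k\times 0$ over $k\times k$), contrary to what you assert. Once you know each $M_a$ is free over the local factor $R_a$, you still need the ranks $r_a$ to agree across the factors. This is immediate---$M_a\otimes_{H'}T$ is free of rank $r_a$ over the nonzero ring $R_a\otimes_{H'}T$, and by hypothesis also free of rank $n$, so $r_a=n$---and your ``alternative route'' at the end essentially contains this observation, but it should be stated in the main line of argument rather than deferred.
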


\begin{proof}
$M$ is a finite projective $C \otimes_H R$-module, and there exists a surjection $f: M \otimes_{H'} T \twoheadrightarrow M$ of $C \otimes_H R$-modules having a section. Let $\{e_1, \ldots, e_n\}$ be a basis of $M \otimes_{H'} T$ over $C\otimes_H R\otimes_{H'} T$. Let $N \coloneqq \oplus_{i=1}^n (C\otimes_H R) \cdot f(e_i)$. Then the natural map $N \arr M$ of $C \otimes_H R$-modules is an injection since $\{e_1, \ldots, e_n\}$ is a basis of $M \otimes_{H'} T$ over $C\otimes_H R\otimes_{H'} T$. Furthermore, $\R{dim}_C N = \R{dim}_C M$, so it is bijective. 

\end{proof}

Let $L$ be a finite extension of $E$, and write $B_L \coloneqq L\otimes_E B$. Given $\bm{v}$ as above, let $\bm{v}' = (D_L \coloneqq L\otimes_E D, ~\{\R{Fil}^i D_{L, K} = L\otimes_E \R{Fil}^i D_{E, K}\}_{i=0, \ldots, r})$.

\begin{lem} \emph{(cf. \cite[Lemma 4.1.6]{liu-filtration-torsion})} \label{lem:3.7}
With notations as above, $V_B$ has Hodge-Tate type $\bm{v}$ if and only if $V_{B_L} \coloneqq B_L\otimes_B V_B$ has Hodge-Tate type $\bm{v}'$.
\end{lem}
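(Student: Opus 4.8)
The forward implication is formal from Lemma~\ref{lem:3.1}; the reverse implication is a descent statement along the coefficient extension $E \hookrightarrow L$, and this is where Lemma~\ref{lem:3.6} does the work.

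\textbf{The ``only if'' direction.} Suppose $V_B$ has Hodge-Tate type $\bm{v}$. Since $B_L = L\otimes_E B$ is a finite $B$-algebra, Lemma~\ref{lem:3.1}(1) gives a $(B_L)_K$-module isomorphism $\R{gr}^i D_{\R{dR}}^K(V_{B_L}) \cong B_L\otimes_B \R{gr}^i D_{\R{dR}}^K(V_B)$, which by hypothesis is $\cong B_L\otimes_B(B\otimes_E \R{gr}^i D_{E,K}) = B_L\otimes_E \R{gr}^i D_{E,K}$. As $L/E$ is flat, $\R{gr}^i D_{L,K} = L\otimes_E \R{gr}^i D_{E,K}$, so the last term equals $B_L\otimes_L \R{gr}^i D_{L,K}$; hence $V_{B_L}$ has Hodge-Tate type $\bm{v}'$.

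\textbf{The ``if'' direction, reduction to $B$ a field.} Writing $B_{\R{red}} \cong \prod_l E_l$ with $E_l/E$ finite fields, Lemma~\ref{lem:3.5} reduces the claim to showing each $V_{E_l}$ has Hodge-Tate type $\bm{v}$. Now $L\otimes_E E_l$ is a finite $B_L$-algebra (a quotient of $B_L$) and $V_{L\otimes_E E_l} \cong (L\otimes_E E_l)\otimes_{B_L} V_{B_L}$, so by Lemma~\ref{lem:3.2} the representation $V_{L\otimes_E E_l}$ has Hodge-Tate type $\bm{v}'$. Replacing $B$ by $E_l$, we are reduced to proving: if $B$ is a field finite over $E$ and $V_{B_L}$ has Hodge-Tate type $\bm{v}'$ (where $B_L = L\otimes_E B$), then $V_B$ has Hodge-Tate type $\bm{v}$.

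\textbf{The ``if'' direction, field case.} Write $E_K \cong \prod_{j=1}^s H_j$ as in the text, so $L_K = L\otimes_E E_K \cong \prod_j(L\otimes_E H_j)$ with each $L\otimes_E H_j \cong \prod_k H_{j,k}$ a finite product of fields finite over $H_j$; thus the field factors of $L_K$ are exactly the $H_{j,k}$. Set $n_{i,j} := \dim_{H_j}\R{gr}^i_j D_{E,K}$. By Lemma~\ref{lem:3.4} it suffices to show $\R{gr}^i_j D_{\R{dR}}^K(V_B)$ is free over $B_{H_j}$ of rank $n_{i,j}$ for all $i,j$. Decomposing the isomorphism $\R{gr}^i D_{\R{dR}}^K(V_{B_L}) \cong B_L\otimes_B \R{gr}^i D_{\R{dR}}^K(V_B)$ of Lemma~\ref{lem:3.1}(1) over the factors of $L_K$ lying above $H_j$ yields a $(B_L\otimes_E H_j)$-module isomorphism
\[
\bigoplus_k \R{gr}^i_{j,k} D_{\R{dR}}^K(V_{B_L}) \;\cong\; L\otimes_E \R{gr}^i_j D_{\R{dR}}^K(V_B) .
\]
By the hypothesis and Lemma~\ref{lem:3.4} applied over the coefficient field $L$, each $\R{gr}^i_{j,k} D_{\R{dR}}^K(V_{B_L})$ is free over $(B_L)_{H_{j,k}} = B_L\otimes_L H_{j,k}$ of rank $\dim_{H_{j,k}}\R{gr}^i_{j,k} D_{L,K}$; since $\bm{v}'$ is the base change of $\bm{v}$, this rank equals $n_{i,j}$ for every $k$. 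Hence $L\otimes_E \R{gr}^i_j D_{\R{dR}}^K(V_B)$ is finite free over $\prod_k(B_L)_{H_{j,k}} = B_L\otimes_E H_j$ of rank $n_{i,j}$. Now apply Lemma~\ref{lem:3.6} with $H = H' = E$, $C = H_j$ (a field over $E$, of possibly infinite degree), $R = B$, and $T = L$: here $C\otimes_H R = B_{H_j}$, $C\otimes_H R\otimes_{H'} T = B_L\otimes_E H_j$, and $M := \R{gr}^i_j D_{\R{dR}}^K(V_B)$ satisfies $M\otimes_{H'} T = L\otimes_E \R{gr}^i_j D_{\R{dR}}^K(V_B)$, which we have just shown is finite free over $C\otimes_H R\otimes_{H'} T$. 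Therefore $M$ is finite free over $B_{H_j}$, and comparing $E$-dimensions after $(-)\otimes_E L$ shows its rank is $n_{i,j}$. By Lemma~\ref{lem:3.4}, $V_B$ has Hodge-Tate type $\bm{v}$.

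\textbf{Main obstacle.} The subtle point is entirely in the reverse direction: faithful flatness of $B\to B_L$ does not suffice to descend an abstract isomorphism $B_L\otimes_B \R{gr}^i D_{\R{dR}}^K(V_B)\cong B_L\otimes_E\R{gr}^i D_{E,K}$, so one must instead descend the structural property of being free of a prescribed rank on each component --- which is exactly the content of Lemma~\ref{lem:3.6} --- and to set this up one must carefully match the field-factor decompositions of $E_K$ and $L_K$ and check that the ranks over the new factors $H_{j,k}$ remain constant, which uses that $\bm{v}'$ is the honest base change of $\bm{v}$.
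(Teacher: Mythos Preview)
Your argument is correct and follows essentially the same route as the paper, which simply cites Liu's proof of \cite[Lemma~4.1.6]{liu-filtration-torsion} together with Lemma~\ref{lem:3.6}; you have written out in full the reduction to $B$ a field via Lemma~\ref{lem:3.5}, the matching of the field factors of $E_K$ and $L_K$, and the application of Lemma~\ref{lem:3.6} with $(H,H',C,R,T)=(E,E,H_j,B,L)$. One small wording issue: in the last step ``comparing $E$-dimensions'' is not quite right, since $H_j$ (and hence $B_{H_j}$) may be infinite-dimensional over $E$; what you actually compare is the $B_{H_j}$-rank before and after $\otimes_E L$, or equivalently $H_j$-dimensions, and that gives $n_{i,j}$ immediately.
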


\begin{proof}
Given Lemma \ref{lem:3.6}, it follows from the same argument as in the proof of \cite[Lemma 4.1.6]{liu-filtration-torsion}.
\end{proof}

\begin{lem} \label{lem:3.8}
Suppose we have an injection $B \hookrightarrow B'$ of finite $E$-algebras. If $V_{B'} = B'\otimes_B V_B$ has Hodge-Tate type $\bm{v}$, then also $V_B$ has Hodge-Tate type $\bm{v}$.	
\end{lem}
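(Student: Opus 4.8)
The strategy is to bootstrap from the two previous lemmas—Lemma 3.5 (reduction to a product of fields via the nilradical) and Lemma 3.7 (stability under finite coefficient extension)—so that we only need to handle the case where both $B$ and $B'$ are fields finite over $E$. First I would reduce the source: by Lemma 3.5, $V_B$ has Hodge-Tate type $\bm{v}$ if and only if $V_{E_l}$ does for each factor $E_l$ of $B_{\R{red}} = B/\cN(B) \cong \prod_{l=1}^m E_l$. So it suffices to show that each $V_{E_l}$ has Hodge-Tate type $\bm{v}$, and for this it is enough to exhibit, for each $l$, a finite $E$-algebra injection $E_l \hookrightarrow B''$ with $V_{B''} = B'' \otimes_{E_l} V_{E_l}$ of Hodge-Tate type $\bm{v}$ (or $\bm{v}'$ after a coefficient extension, using Lemma 3.7 to pass back and forth).

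The key point is that the composite $B \hookrightarrow B' \twoheadrightarrow B'_{\R{red}} \cong \prod_{l'} E'_{l'}$ sends $B$ into this product of fields; composing with a projection to one factor $E'_{l'}$ gives a ring map $B \to E'_{l'}$, whose kernel is a prime (hence maximal, since $B$ is Artinian) ideal containing $\cN(B)$, so it factors through some $E_l$, giving a field embedding $E_l \hookrightarrow E'_{l'}$. I need each $E_l$ to arise this way: since $B \hookrightarrow B'$ is injective and $B$ is Artinian, every minimal prime of $B$ (equivalently, every $E_l$) lies under some minimal prime of $B'$, hence under some $E'_{l'}$; this is the standard going-up/lying-over fact that $\R{Spec}(B') \to \R{Spec}(B)$ is surjective on minimal primes when the ring map is injective. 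Fixing such an $l'$ for a given $l$, I get $E_l \hookrightarrow E'_{l'}$, and $V_{E'_{l'}} = E'_{l'} \otimes_B V_B = E'_{l'} \otimes_{E_l} V_{E_l}$. Now apply Lemma 3.5 to $B'$: since $V_{B'}$ has Hodge-Tate type $\bm{v}$, so does $V_{E'_{l'}}$. Finally, Lemma 3.7 (with $L = E'_{l'}$ viewed as a finite extension of $E_l$, after first observing that the whole setup is insensitive to replacing the base field $E$ by $E_l$) tells us that $V_{E_l}$ has Hodge-Tate type $\bm{v}$ if and only if $V_{E'_{l'}}$ has Hodge-Tate type $\bm{v}'$—which it does. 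This gives the desired conclusion for each $l$, and Lemma 3.5 for $B$ then finishes the proof.

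The main obstacle I anticipate is purely bookkeeping: matching up the two invocations of Lemma 3.7, which is stated for extending the coefficient field $E$, with the situation here where I want to compare a representation over $E_l$ with one over $E'_{l'}$. One must be careful that "Hodge-Tate type $\bm{v}$" is intrinsic enough that it does not matter whether we regard the coefficients as an $E$-algebra or an $E_l$-algebra—this is clear from the definition, since the type is phrased in terms of $\R{gr}^i D_{\R{dR}}^K(V_B) \cong B \otimes_E \R{gr}^i D_{E,K}$ and base-changing $E \to E_l$ throughout replaces $\bm{v}$ by the evident $\bm{v}_{E_l}$ without changing the content. Once that is set up correctly, everything else is a direct application of the cited lemmas and the elementary commutative algebra of injections of Artinian rings.
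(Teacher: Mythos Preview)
Your approach is correct and essentially the same as the paper's. The paper also reduces via Lemma~3.5 to the case where $B$ and $B'$ are fields (noting the induced injection $B_{\R{red}} \hookrightarrow B'_{\R{red}}$), and then handles the field case by invoking Lemma~3.4 together with Lemma~3.6 directly; your route through Lemma~3.7 (applied with base field $E_l$) amounts to the same thing, since Lemma~3.7 is itself proved using Lemma~3.6.
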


\begin{proof}
We have an induced injection of finite $E$-algebras $B_{\R{red}} \hookrightarrow B'_{\R{red}}$. By Lemma \ref{lem:3.5}, we can reduce to the case when $B$ and $B'$ are fields. Then it follows from Lemma \ref{lem:3.4} and Lemma \ref{lem:3.6}.	
\end{proof}

As we will apply the functor $M_{\R{st}}$ to $\cG_K$-representations semi-stable over $K'$, we need to consider $D_{\R{dR}}^{K'}(V_B) \coloneqq (B_{\R{dR}}\otimes_{\bQ_p} V_B^{\vee})^{\cG_{K'}}$. Note that $D_{\R{dR}}^{K'}(V_B) = D_{\R{dR}}^K (V_B) \otimes_K K'$. Thus, by essentially the same argument as in the proof of Lemma \ref{lem:3.7}, we see that $V_B$ has Hodge-Tate type $\bm{v}$ if and only if $\R{gr}^i D_{\R{dR}}^{K'}(V_B) \cong B\otimes_E \R{gr}^i D_{E, K'}$ as $B_{K'}$-modules for all $i$. Here, $D_{E, K'} \coloneqq D_E \otimes_{\bQ_p} K' = D_{E, K} \otimes_K K'$ which has the induced filtration from $D_{E, K}$. 

Let $K_1 \subset E$ be the maximal unramified subextension over $\bQ_p$. Then $K_1 = W(k_1)[\frac{1}{p}]$ for some finite field $k_1$, and $E/K_1$ is totally ramified. Choose a uniformizer $\varpi_E \in E$, and let $\tilde{F}(u)$ be its Eisenstein polynomial over $K_1$. Let $G(u)$ be a monic irreducible polynomial in $\bQ_p[u]$ such that $K_1 \cong \bQ_p[u]/G(u)\bQ_p[u]$, and let $G(u) = \prod_{j=1}^m G_j(u)$ be the decomposition into monic irreducible polynomials in $K_0[u]$. Note that $G_j(u) \in W(k)[u]$. Denote by $\bar{G}_j(u) \in k[u]$ the reduction of $G_j(u)$ mod $p$. Then $\bar{G}_j(u)$ is irreducible in $k[u]$ and $k[u]/\bar{G}_j(u)k[u] \cong l_j$ for a finite extension $l_j / k$. By the Chinese remainder theorem, $W(k_1)\otimes_{\bZ_p}W(k) \cong \prod_{j=1}^m W(l_j)$. Since $\tilde{F}(u)$ is irreducible over $W(l_j)[\frac{1}{p}]$ for each $j$, we have $E\otimes_{\bQ_p}K_0 \cong \prod_{j=1}^m L_j$ and $\cO_E\otimes_{\bZ_p}W(k) \cong \prod_{j=1}^m \cO_{L_j}$ where $L_j \coloneqq (W(l_j)[\frac{1}{p}])(\varpi_E)$.

For each $j = 1, \ldots, m$, let $F(u) = \prod_{s=1}^{t_j} F_{j_s}(u)$ be the decomposition of $F(u)$ into monic irreducible polynomials in $L_j[u]$, and choose a root $\varpi_{j_s}$ of $F_{j_s}(u)$ for each $s$. Then  
\[
L_j\otimes_{K_0} K' \cong \prod_{s=1}^{t_j} L_j[u]/F_{j_s}(u)L_j[u] \cong \prod_{s=1}^{t_j} T_{j_s}
\]
where $T_{j_s} \coloneqq L_j(\varpi_{j_s})$. Thus, we have ring isomorphisms
\[
E\otimes_{\bQ_p}K' \cong (E\otimes_{\bQ_p}K_0)\otimes_{K_0}K' \cong \prod_{j=1}^m\prod_{s=1}^{t_j}T_{j_s}.
\]
Let $t = \sum_{j=1}^m t_j$. After re-indexing the fields $T_{j_s}$, we have $E\otimes_{\bQ_p}K' \cong \prod_{s=1}^t T_j$, and the statement analogous to Lemma \ref{lem:3.4} holds for $D_{\R{dR}}^{K'}(V_B)$. 

Let $\cO_{E, K'} \coloneqq \cO_E\otimes_{\bZ_p}\cO_{K'}$. The projection $q_s: E_{K'} \arr T_s$ induces the map $\cO_{E, K'} \arr \cO_{T_s}$, and we have the natural map $q: \cO_{E, K'} \arr \prod_{s=1}^t \cO_{T_s}$. Denote by $v_p$ the $p$-adic valuation normalized by $v_p(p) = 1$.

\begin{lem} \label{lem:3.9}
There exists a positive integer $c'$ depending only on $K_0$ and $F(u)$ such that $p^{c'}(\prod_{s=1}^t \cO_{T_s}) \subset q(\cO_{E, K'})$.	
\end{lem}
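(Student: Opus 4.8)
The claim is a purely integral/commutative-algebra statement: the cokernel of the injection $q:\cO_{E,K'}\hookrightarrow\prod_{s=1}^t\cO_{T_s}$ (injective because after inverting $p$ it becomes the isomorphism $E_{K'}\cong\prod_s T_s$) is killed by a bounded power of $p$, with the bound depending only on $K_0$ and $F(u)$. My plan is to factor the map $q$ through the two intermediate base changes that were used to build the decomposition $E\otimes_{\bQ_p}K'\cong\prod_{s}T_s$, bound the index at each stage, and add up the exponents; the point is that each stage is a ``restriction of scalars along a ramified extension defined by a fixed Eisenstein/irreducible polynomial'' computation, and the discriminant of that polynomial controls the index.

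First I would reduce to studying $\cO_E\otimes_{\bZ_p}\cO_{K'}\to\prod_s\cO_{T_s}$ by writing $\cO_{K'}=W(k)[u]/F(u)$ (here I use that $F(u)$ is the Eisenstein polynomial of the chosen uniformizer $\pi$ of $K'$ over $K_0$, so $\cO_{K'}=W(k)[\pi]=W(k)[u]/(F(u))$ as the excerpt sets up). Then $\cO_E\otimes_{\bZ_p}\cO_{K'}\cong\cO_E[u]/(F(u))$, and the target $\prod_s\cO_{T_s}$ sits inside $\prod_s\cO_{T_s}$ where $T_s$ runs over the factors of $E\otimes_{\bQ_p}K'$. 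Passing to $E\otimes_{\bQ_p}K_0\cong\prod_j L_j$ first: since $\cO_E\otimes_{\bZ_p}W(k)\cong\prod_j\cO_{L_j}$ is already an isomorphism of rings (this is recorded in the paragraph before the lemma, using that $W(k_1)\otimes_{\bZ_p}W(k)\cong\prod_jW(l_j)$ is étale and that $\cO_E=W(k_1)[\varpi_E]$ with $\tilde F(u)$ staying Eisenstein over each $W(l_j)[\tfrac1p]$), there is no loss there. So it suffices to bound, for each fixed $j$, the index of $\cO_{L_j}[u]/(F(u))$ inside $\prod_{s=1}^{t_j}\cO_{T_{j_s}}$, where $F(u)=\prod_{s}F_{j_s}(u)$ is the factorization over $L_j$ and $T_{j_s}=L_j(\varpi_{j_s})$.

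For this last step I would invoke the standard fact that for a monic polynomial $f\in\cO_L[u]$ that factors as $f=\prod f_s$ over $\cO_L$ with distinct irreducible factors, one has inclusions
\[
\prod_s\cO_L[u]/(f_s)\ \supseteq\ \cO_L[u]/(f)\ \supseteq\ \mathrm{disc}(f)\cdot\prod_s\cO_L[u]/(f_s),
\]
and moreover each $\cO_L[u]/(f_s)$ is contained in $\cO_{T_{j_s}}$ with index bounded by $\mathrm{disc}(f_s)$ (the order $\cO_L[\varpi_{j_s}]\subset\cO_{T_{j_s}}$ has index dividing the different). Hence $p^{c'}\prod_s\cO_{T_{j_s}}\subset\cO_{L_j}[u]/(F(u))$ for $c'=\big\lceil v_p(\mathrm{disc}_{L_j}(F))\big\rceil+\sum_s\big\lceil v_p(\mathrm{disc}_{L_j}(F_{j_s}))\big\rceil$, and the key observation is that $v_p(\mathrm{disc}_{L_j}(F))$ and the $v_p(\mathrm{disc}_{L_j}(F_{j_s}))$ are all bounded by $v_p(\mathrm{disc}_{K_0}(F))$ together with $[K':K_0]$ — quantities depending only on $K_0$ and $F(u)$, not on $E$. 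Taking $c'$ to be the maximum over the finitely many discriminant-valuation bounds coming from $F$ (uniformly in the possible factorizations, which is controlled since the product of the $F_{j_s}$ over all $j,s$ still just divides $F$ over a larger ring and the total discriminant valuation cannot exceed that of $F$ over $K_0$) gives the asserted uniform $c'$.

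The main obstacle is making the uniformity in $E$ genuinely clean: one must phrase the discriminant bound so that it refers only to $F(u)$ over $K_0$ (equivalently to $\mathrm{disc}_{K_0}(F)$ and $e'=[K':K_0]$), and check that base-changing $F$ along the unramified-times-totally-ramified extension $K_0\to L_j$ and then further factoring does not increase the relevant $p$-adic valuations — this is where one uses that $\mathrm{disc}(F)$ over a larger base still divides (a unit times) the image of $\mathrm{disc}_{K_0}(F)$, so its valuation is unchanged, and that the valuations of the discriminants of the factors sum to at most $v_p(\mathrm{disc}_{L_j}(F))$. Once that bookkeeping is set up the rest is the routine ``conductor of a monogenic order'' estimate.
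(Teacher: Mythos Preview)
Your approach is correct and reaches the same conclusion as the paper, but the execution differs enough to be worth a comparison. The paper argues by producing explicit elements in the image: for a fixed finite extension $L/K_0$ with factorization $F=\prod_s F_s$ over $L$, the element $\prod_{h\neq s}F_h(\pi)\in\cO_L\otimes_{\bZ_p}\cO_{K'}$ maps to the tuple supported only in the $s$-th slot with value $\prod_{h\neq s}F_h(\varpi_s)$; raising to a suitable power yields $(0,\ldots,p^a,\ldots,0)$ in the image, and the paper then asserts uniformity by ranging over \emph{all possible factorizations} of $F(u)$ over finite extensions of $K_0$. Your argument instead packages this via the conductor--discriminant formalism: bound the cokernel of $\cO_{L_j}[u]/(F)\hookrightarrow\prod_s\cO_{L_j}[u]/(F_{j_s})$ by the resultants $\R{Res}(F_{j_s},\prod_{h\neq s}F_{j_h})$, bound the cokernel of $\cO_{L_j}[\varpi_{j_s}]\subset\cO_{T_{j_s}}$ by the different $F_{j_s}'(\varpi_{j_s})$, and observe that all of these valuations are dominated by $v_p(\R{disc}_{K_0}(F))$, which is independent of $E$. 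The two arguments are really the same computation --- the paper's $\prod_{h\neq s}F_h(\varpi_s)$ is precisely the resultant input, and its valuation is bounded by $v_p(F'(\varpi_s))$ and hence by $v_p(\R{disc}(F))$ --- but your phrasing makes the uniformity in $E$ transparent from the outset (a single number $v_p(\R{disc}_{K_0}(F))$ controls everything), whereas the paper's ``all possible factorizations'' step leaves the reader to extract that bound. One small point to tighten in your write-up: the inclusion you need is $p^{?}\cO_{T_{j_s}}\subset\cO_{L_j}[\varpi_{j_s}]$, i.e.\ a bound on the conductor of the monogenic order, and you should state explicitly that this conductor divides $F_{j_s}'(\varpi_{j_s})$ (hence has $v_p$ at most $v_p(\R{disc}(F_{j_s}))\le v_p(\R{disc}(F))$), rather than just citing ``index divides the different''.
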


\begin{proof}
For a field $L$ finite over $K_0$, let $F(u) = \prod_{s=1}^w F_s(u)$ be the decomposition of $F(u)$ into monic irreducible polynomials in $L[u]$, and choose a root $\varpi_s$ of $F_s(u)$ for each $s$. We have 
\[
L\otimes_{K_0} K' \cong \prod_{s=1}^w L[u]/F_s(u)L[u] \cong \prod_{s=1}^w L_s'
\]
where $L_s' \coloneqq L(\varpi_s)$. Let $q_s': L\otimes_{K_0} K' \arr L_s'$ be the composition of the above isomorphism followed by the projection onto the $s$-th factor. Then $q_s'$ induces a surjection $\cO_{L}\otimes_{\bZ_p} \cO_{K'} \twoheadrightarrow \cO_{L_s'}$, and we have the natural map $\cO_{L}\otimes_{\bZ_p} \cO_{K'} \hookrightarrow \prod_{s=1}^w \cO_{L_s'}$. Under this map, $\prod_{h \neq s} F_h(\pi)$ maps to $(0, \ldots, 0, \prod_{h \neq s}F_h(\varpi_s), 0, \ldots, 0)$ whose components are $0$ except the $s$-th component. Write $v_p(\prod_{h \neq s}F_h(\varpi_s)) = \frac{a}{b}$ for some relatively prime positive integers $a, b$. Then $(\prod_{h \neq s}F_h(\varpi_s))^b = p^a x$ for some $x \in \cO_{L_s}^{\times}$ with $v_p(x) = 0$. Thus, $(0, \ldots, 0, p^a, 0, \ldots, 0)$, whose components are $0$ except the $s$-th component, lies in the image of $\cO_{L}\otimes_{\bZ_p}\cO_{K'}$ under the above map. 

Repeating this argument for all $s = 1, \ldots, w$ and considering all possible decompositions of $F(u)$ into irreducible factors over some finite field over $K_0$, we see that there exists a positive integer $c'$ depending only on $K_0$ and $F(u)$ such that for any $L$ finite over $K_0$, if we write $L\otimes_{K_0}K \cong \prod_{s=1}^w L(\varpi_s)$ as above, then for each $s$, $(0, \ldots, 0, p^{c'}, 0, \ldots, 0)$ whose components are $0$ except the $s$-th component lies in the image of $\cO_{L}\otimes_{\bZ_p}\cO_{K'}$. Applying this for each $L_j$, we get the result.
\end{proof}

\begin{cor} \label{cor:3.10}
Let $M$ be a torsion free $\cO_{E, K'}$-module. Then the torsion part of $M_s \coloneqq M \otimes_{\cO_{E, K'}, q_s} \cO_{T_s}$ is killed by $p^{c'}$, where $c'$ is the constant given in Lemma \ref{lem:3.9}. 	
\end{cor}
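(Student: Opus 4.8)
The plan is to deduce this from Lemma \ref{lem:3.9} by passing through the injective ring map $q$. Write $A\coloneqq\cO_{E,K'}$ and $B\coloneqq\prod_{s=1}^{t}\cO_{T_s}$. First I would check that $q$ is injective: $A$ is $p$-torsion free (it is a finite free $\cO_{K'}$-module, since $\cO_E$ is finite free over $\bZ_p$), so $A\hookrightarrow A[\tfrac1p]=E_{K'}=B[\tfrac1p]$, and $q$ is just the restriction of this isomorphism. Thus we may view $A\subseteq B$, and Lemma \ref{lem:3.9} says precisely that $B/A$ is killed by $p^{c'}$. Since $B=\bigoplus_{s=1}^{t}\cO_{T_s}$ is a finite direct sum, we get $M\otimes_A B\cong\bigoplus_{s=1}^{t}M_s$; as each $\cO_{T_s}$ is a complete discrete valuation ring in which $p\neq 0$, the torsion submodule of each $M_s$ is its $p$-power torsion, and these assemble into the $p$-power torsion of $M\otimes_A B$. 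Hence it suffices to show that the $p$-power torsion of $M\otimes_A B$ is killed by $p^{c'}$.

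Next I would record two facts about the canonical map $\varphi\colon M=M\otimes_A A\arr M\otimes_A B$. On one hand, $p^{c'}(M\otimes_A B)\subseteq\varphi(M)$: any element of $M\otimes_A B$ is a finite sum $\sum_i m_i\otimes b_i$, and since $p^{c'}b_i\in A$ for each $i$ we get $p^{c'}\sum_i m_i\otimes b_i=\sum_i\bigl(p^{c'}b_i\bigr)m_i\otimes 1\in\varphi(M)$. On the other hand, $\ker\varphi$ is killed by $p^{c'}$: in the long exact sequence of $\T{Tor}_{\bullet}^{A}(M,-)$ attached to $0\arr A\arr B\arr B/A\arr 0$, the kernel of $\varphi$ is a quotient of $\T{Tor}_1^{A}(M,B/A)$, and the latter is killed by $p^{c'}$ because $B/A$ is.

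Finally I would combine the two. Let $x\in M\otimes_A B$ satisfy $p^{n}x=0$ with $n\geq1$. If $n\leq c'$ then $p^{c'}x=p^{c'-n}p^{n}x=0$. If $n>c'$, pick $m\in M$ with $\varphi(m)=p^{c'}x$; then $\varphi(p^{n-c'}m)=p^{n-c'}p^{c'}x=p^{n}x=0$, so $p^{n-c'}m\in\ker\varphi$, whence $p^{n}m=p^{c'}\bigl(p^{n-c'}m\bigr)=0$; since $M$ is torsion free and $n\geq1$ this forces $m=0$, and so $p^{c'}x=\varphi(m)=0$. I do not expect a genuine obstacle here; the one point that needs attention is that $q$ is not flat in general, so $\varphi$ need not be injective, which is exactly why one must bound $\ker\varphi$ — and this is supplied by the elementary fact that $\T{Tor}$ against a module killed by $p^{c'}$ is killed by $p^{c'}$.
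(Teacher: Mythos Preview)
Your proof is correct and rests on the same idea as the paper's: use Lemma~\ref{lem:3.9} together with the injectivity of $q$ to compare $M$ and $M\otimes_A B$, and then invoke the torsion-freeness of $M$. The paper packages this more cleanly by observing that $p^{c'}B\subset A$ gives an honest $A$-linear map $s_M\colon M'\to M$ (induced by $b\mapsto p^{c'}b\in A$) with $q_M\circ s_M=p^{c'}\mathrm{Id}_{M'}$; then any torsion $x\in M'$ has $s_M(x)$ torsion in $M$, hence zero, so $p^{c'}x=q_M(s_M(x))=0$ --- this bypasses your $\mathrm{Tor}$ computation and the case split entirely.
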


\begin{proof}
Let $M' = \oplus_{s=1}^t M_s$. By Lemma \ref{lem:3.9}, there exist morphisms of $\cO_{E, K'}$-modules $q_M: M \arr M'$ and $s_M: M' \arr M$ such that $q_M \circ s_M = p^{c'}\R{Id}|_{M'}$. Let $x$ be a torsion element in $M'$. Then $s_M(x) = 0$, so $p^{c'}x = q_M(s_M(x)) = 0$.	
\end{proof}

Let $C$ be a finite flat $\cO_E$-algebra, and let $\Lambda \in \R{Rep}_C^{\R{pst}, K', r}$ such that $\Lambda$ is a finite free $C$-module of rank $d$ and $\Lambda[\frac{1}{p}]$ has Hodge-Tate type $\bm{v}$. Since $\cO_E$ is henselian, $C \cong \prod_{j=1}^n C_j$ where each $C_j$ is a finite flat local $\cO_E$-algebra. We say $C$ is \textit{good} if for each $j = 1, \ldots, n$, there exists a prime ideal $\mathfrak{p}_j \subset C_j$ such that $C_j/\mathfrak{p}_j \cong \cO_{F_j}$ for some finite extension $F_j/\bQ_p$. Let $L_{K'} \coloneqq M_{\R{st}}(\Lambda)_{K'}$.

\begin{lem} \label{lem:3.11}
Suppose $C$ is good. Then $L_{K'}$ is finite free over $C_{K'} \coloneqq C\otimes_{\bZ_p} \cO_{K'}$ of rank $d$. 
\end{lem}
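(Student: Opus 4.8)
The plan is to reduce to the case where $C = \cO_F$ is the ring of integers in a finite extension $F/\bQ_p$, exploit the equality $L_{K'} = M_{\R{st}}(\Lambda)_{K'}$, and compare $L_{K'}$ with the de Rham module $D_{\R{dR}}^{K'}(\Lambda[\tfrac{1}{p}])$, whose structure is controlled by the Hodge-Tate type hypothesis. First, since $C \cong \prod_{j=1}^n C_j$ and $L_{K'}$ decomposes compatibly along the idempotents, it suffices to treat each local factor $C_j$; so we may assume $C$ is local and flat over $\cO_E$, with a prime $\mathfrak{p} \subset C$ such that $C/\mathfrak{p} \cong \cO_F$. Both $\Lambda$ and $M_{\R{st}}(\Lambda)$ are finite free over their coefficient rings ($C$ and $C\otimes_{\bZ_p}W(k)$ respectively, the latter by the construction in Section \ref{sec:2.2} together with Proposition \ref{prop:2.9} applied to the identity, since $\p^*(\fM)/u\p^*(\fM)$ is finite free of rank $d$ over $C\otimes_{\bZ_p}W(k)$), hence $L_{K'} = \cO_{K'}\otimes_{W(k)}M_{\R{st}}(\Lambda)$ is finite free over $C_{K'} = C\otimes_{\bZ_p}\cO_{K'}$ of \emph{some} rank; the content of the lemma is that this rank is $d$.

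To pin down the rank, I would pass to the quotient field. Localizing at $\mathfrak{p}$ and then completing, one has $C_{\mathfrak p} $ and, after inverting $p$ on $C/\mathfrak p = \cO_F$, the representation $\Lambda/\mathfrak{p}\Lambda[\tfrac1p]$ is a $d$-dimensional $F$-representation of $\cG_K$, semi-stable over $K'$, and—by Lemma \ref{lem:3.7} together with the hypothesis that $\Lambda[\tfrac1p]$ has Hodge-Tate type $\bm v$—of the same Hodge-Tate type. Now $D_{\R{dR}}^{K'}$ is exact on semi-stable representations and commutes with the relevant base change (Lemma \ref{lem:3.1}), so $D_{\R{dR}}^{K'}(\Lambda[\tfrac1p])$ is free of rank $d$ over $C[\tfrac1p]\otimes_{\bQ_p}K'$, and reducing mod $\mathfrak p$ gives a free module of rank $d$ over $F\otimes_{\bQ_p}K'$. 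On the integral side, $M_{\R{st}}(\Lambda)[\tfrac1p] = D_{\R{st}}^{K'}(\Lambda[\tfrac1p])$ by Theorem \ref{thm:2.2}, hence $L_{K'}[\tfrac1p] = D_{\R{dR}}^{K'}(\Lambda[\tfrac1p])$, which forces $\R{rank}_{C_{K'}} L_{K'} = \R{rank}_{C[1/p]\otimes_{\bQ_p}K'} D_{\R{dR}}^{K'}(\Lambda[\tfrac1p]) = d$.

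The step I expect to be the main obstacle is verifying that $M_{\R{st}}(\Lambda)$ is finite free over $C\otimes_{\bZ_p}W(k)$ — i.e. that the coefficient structure is genuinely free and not merely projective — since the construction of $M_{\R{st}}$ in Section \ref{sec:2.2} goes through the Kisin module $\fM$ and the quotient $\p^*(\fM)/u\p^*(\fM)$, and one must check that the $C$-module structure on $\fM$ is itself free of rank $d$ over $C\otimes_{\bZ_p}\fS$. For this I would invoke Proposition \ref{prop:2.9} (the functoriality of the coefficient action) and the anti-equivalence of Theorem \ref{thm:2.5}(1): a finite free $C[\cG_{K'}]$-lattice corresponds to a Kisin module on which $C\otimes_{\bZ_p}\fS$ acts, and freeness over $\fS$ plus $C$-linearity of all structure maps, combined with the fact that $C$ is a product of local rings, yields freeness over $C\otimes_{\bZ_p}\fS$ by a Nakayama argument; alternatively one can deduce it from Lemma \ref{lem:3.6}-style descent after the reduction above. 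Once freeness of $M_{\R{st}}(\Lambda)$ over $C\otimes_{\bZ_p}W(k)$ is in hand, the rank computation via $D_{\R{dR}}^{K'}$ is routine.
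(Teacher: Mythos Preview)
You correctly reduce to the local case and correctly identify the crux of the lemma as showing that the Kisin module $\fM$ is free of rank $d$ over $\fS_C = C\otimes_{\bZ_p}\fS$ (equivalently, that $M_{\R{st}}(\Lambda)$ is free of rank $d$ over $C\otimes_{\bZ_p}W(k)$). But your proposed arguments for this step do not go through.

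First, Proposition~\ref{prop:2.9} concerns the $A$-linearity of the map $\tilde{h}$ between \emph{torsion} objects; applying it ``to the identity'' yields no information about freeness of $\fM$. Second, a naive Nakayama argument fails because even when $C$ is local, $C\otimes_{\bZ_p}\fS$ is only semilocal (its residue ring $\bF_C\otimes_{\bF_p}k[\![u]\!]$ is a product of fields over $k[\![u]\!]$), so knowing that $\fM$ is finite over $\fS_C$ and free over $\fS$ gives at best projectivity, not freeness. Third, your route through $D_{\R{dR}}^{K'}$ only controls $M_{\R{st}}(\Lambda)[\tfrac1p]$; a $p$-torsion-free $C\otimes_{\bZ_p}W(k)$-module whose rationalization is free need not itself be free (e.g.\ over $\bZ_p[\epsilon]/(\epsilon^2)$, take $\bZ_p^2$ with $\epsilon$ acting by $\bigl(\begin{smallmatrix}0&p\\0&0\end{smallmatrix}\bigr)$). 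Finally, Lemma~\ref{lem:3.6} goes in the wrong direction for the descent you have in mind.

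What you are missing is the role of Frobenius. The paper's proof passes to the quotient $\fM' = \cO_{F_1}\otimes_C\fM$ (this is where ``good'' is used). Then $\fM'/u\fM'$ is $p$-torsion-free, hence projective over the product of DVRs $\cO_{F_1}\otimes_{\bZ_p}W(k)$, so a priori its rank may vary over the factors. But $(\fM'/u\fM')[\tfrac1p]$ is isomorphic to its own $\p$-pullback, and $\p$ permutes the maximal ideals of $\cO_{F_1}\otimes_{\bZ_p}W(k)$ \emph{transitively}; this forces the rank to be constant, i.e.\ $\fM'/u\fM'$ is free of rank $d$, whence $\fM'$ is free of rank $d$ over $\cO_{F_1}\otimes_{\bZ_p}\fS$. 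Now Nakayama (applied with respect to the kernel of $C\twoheadrightarrow\cO_{F_1}$, which lies in the Jacobson radical of $\fS_C$) gives a surjection $\fS_C^{\,d}\twoheadrightarrow\fM$, and comparing $\fS$-ranks (both equal $[C:\bZ_p]\,d$) shows it is an isomorphism. The Frobenius-transitivity step is the substantive idea your sketch lacks.
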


\begin{proof}
By Theorem \ref{thm:2.5}, there exists a unique Kisin module $\fM \in \R{Mod}_{\fS}^r(\p, \hat{\cG})$ such that $\hat{T}^{\vee}(\hat{\fM}) = \Lambda$. Write $\fS_C \coloneqq C\otimes_{\bZ_p}\fS$. By the construction of the functor $M_{\R{st}}$ in Section \ref{sec:2.2}, it suffices to show that $\fM$ is a finite free $\fS_C$-module of rank $d$. The Kisin module corresponding to $C_j \otimes_C \Lambda$ is $C_j\otimes_C \fM$, so we may assume without loss of generality that $n = 1$ and so that $C$ is a local ring. 
 	
The Kisin module corresponding to $\cO_{F_1}\otimes_C \Lambda$ (via $C/\mathfrak{p}_1 \cong \cO_{F_1}$) is $\fM' \coloneqq \cO_{F_1} \otimes_C \fM$. Since $\fM'$ is finite free over $\fS$, $\fM'/u\fM'$ is $p$-torsion free. Thus, $\fM'/u\fM'$ is a projective $\cO_{F_1}\otimes_{\bZ_p}W(k)$-module. Since $(\fM'/u\fM')[\frac{1}{p}]$ is isomorphic to its pullback by $\p$ and $\p$ permutes the maximal ideals of $\cO_{F_1}\otimes_{\bZ_p}W(k)$ transitively, $\fM'/u\fM'$ is a free $\cO_{F_1}\otimes_{\bZ_p}W(k)$-module of rank $d$. Thus, $\fM'$ is a free $\cO_{F_1}\otimes_{\bZ_p}\fS$-module of rank $d$.

By Nakayama's lemma, we have a surjection
\[
f: \bigoplus_{i=1}^d \fS_C \cdot e_i \twoheadrightarrow \fM
\]
of $\fS_C$-modules. $\Lambda$ is a free $\bZ_p$-module of rank $[C : \bZ_p]d$, so $\fM$ is free over $\fS$ of rank $[C : \bZ_p]d$. Thus, $f$ is an isomorphism.
\end{proof}

Suppose that $C$ is good and that there exists an ideal $J \subset C$ such that $C/J \cong \cO_E/p^b\cO_E$ for some positive integer $b$. For $s = 1, \ldots, t$, we set $C[\frac{1}{p}]_s \coloneqq (C[\frac{1}{p}]\otimes_{\bQ_p}K')\otimes_{E_{K'}, q_s}T_s$, and define $d_s \coloneqq \R{rank}_{C[\frac{1}{p}]_s}(\R{gr}^0_s (D_{\R{dR}}^{K'}(\Lambda[\frac{1}{p}]))).$ Denote $\R{Fil}^i_s L_{K'} \coloneqq \R{Fil}^i L_{K'}\otimes_{\cO_{E, K'}, q_s} \cO_{T_s}$, and similarly for the graded modules. By Lemma \ref{lem:3.11}, $\R{Fil}^0_s L_{K'}$ is free over $C_s \coloneqq C_{K'}\otimes_{\cO_{E, K'}, q_s} \cO_{T_s}$ of rank $d$.

\begin{lem} \emph{(cf. \cite[Lemma 4.2.7]{liu-filtration-torsion})} \label{lem:3.12}
Suppose that $d_s \neq 0$. Let $l$ be a positive integer satisfying $b \geq ld+1$. Then there exists $x \in \R{gr}^0_s L_{K'}/J\R{gr}^0_s L_{K'}$ such that $p^lx \neq 0$.	
\end{lem}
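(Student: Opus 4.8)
The plan is to exploit the freeness of $L_{K'}$ over $C_{K'}$ from Lemma~\ref{lem:3.11} together with a careful analysis of how the Hodge--Tate type constrains the filtration on $L_{K'} \otimes_C C[\frac{1}{p}]$. First I would unwind what $d_s \neq 0$ means: since $\Lambda[\frac1p]$ has Hodge--Tate type $\bm v$, the analogue of Lemma~\ref{lem:3.4} for $D_{\R{dR}}^{K'}(\Lambda[\frac1p])$ gives that $\R{gr}^0_s D_{\R{dR}}^{K'}(\Lambda[\frac1p])$ is free over $C[\frac1p]_s$ of rank $d_s$. Using that $M_{\R{st}}(\Lambda)[\frac1p] = D_{\R{st}}^{K'}(\Lambda[\frac1p])$ (Theorem~\ref{thm:2.2}) and that $\R{gr}^0$ commutes with inverting $p$ on the torsion-free modules $\R{gr}^i M_{\R{st}}(\Lambda)_{K'}$ (which are torsion free by the remark after Definition~\ref{defn:2.1}), we get $\R{gr}^0_s L_{K'}[\frac1p] = \R{gr}^0_s D_{\R{dR}}^{K'}(\Lambda[\frac1p])$, which is nonzero. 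Hence $\R{gr}^0_s L_{K'}$ is a nonzero $\cO_{T_s}$-module; more precisely, by Corollary~\ref{cor:3.10} applied to the torsion-free $\cO_{E,K'}$-module $\R{gr}^0 L_{K'}$, the torsion of $\R{gr}^0_s L_{K'}$ is killed by $p^{c'}$, so $\R{gr}^0_s L_{K'}$ has a free $\cO_{T_s}$-quotient of positive rank (in fact rank $\geq 1$, using $d_s \neq 0$).

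Next I would reduce to a statement about a single free $C_s$-module. We have the exact sequence
\[
0 \to \R{Fil}^1_s L_{K'} \to \R{Fil}^0_s L_{K'} \to \R{gr}^0_s L_{K'} \to 0,
\]
where $\R{Fil}^0_s L_{K'}$ is free of rank $d$ over $C_s$ by Lemma~\ref{lem:3.11} and the definition of $d_s$ together with the computation above. I would pick an element $e \in \R{Fil}^0_s L_{K'}$ whose image $\bar e$ in $\R{gr}^0_s L_{K'}$ generates a free rank-one $\cO_{T_s}$-summand of the reduced module $\R{gr}^0_s L_{K'}/(\text{torsion})$. The key point is then to track $\bar e$ modulo $J$: we want $p^l \bar e \neq 0$ in $\R{gr}^0_s L_{K'}/J\,\R{gr}^0_s L_{K'}$. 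Since $C/J \cong \cO_E/p^b\cO_E$, reducing mod $J$ the free $C_s$-module $\R{Fil}^0_s L_{K'}$ of rank $d$ becomes free of rank $d$ over $C_s/JC_s \cong (\cO_E/p^b)\otimes_{\cO_E}\cO_{T_s}$, which is a free $\cO_E/p^b$-module; so $\R{Fil}^0_s L_{K'}/J$ is a free $\cO_E/p^b$-module of rank $d\cdot[\cO_{T_s}:\cO_E]$, and in particular $p^{b-1}$ does not annihilate it. The element $e$ reduces to something of order exactly $p^b$ (as an $\cO_E$-module element), since $e$ generates a free $C_s$-summand. The surjection $\R{Fil}^0_s L_{K'} \twoheadrightarrow \R{gr}^0_s L_{K'}$ stays surjective mod $J$, carrying $e$ to $\bar e$; but a surjection of modules over $\cO_E/p^b$ can lower the order of an element by at most a controlled amount — here the relevant bound is governed by the rank $d$ of $\R{Fil}^0_s L_{K'}$ over $C_s$, giving that the order of $\bar e$ mod $J$ is at least $p^{b - ld}$ for $l$ chosen as in the statement (this is where $b \geq ld+1$ enters, yielding $b-ld \geq 1$ and hence $p^l\bar e \neq 0$).

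The hard part will be making the last step precise: controlling how much the $p$-order of an element can drop under the quotient map $\R{Fil}^0_s L_{K'} \to \R{gr}^0_s L_{K'}$ after reducing mod $J$, in terms of the rank $d$. The mechanism is that $\R{Fil}^1_s L_{K'}$ is generated by at most $d-1$ elements over $C_s$ (being a submodule of a free rank-$d$ module over a ring where we have some control, or more robustly, since $\R{gr}^0_s L_{K'}[\frac1p]$ has positive rank, $\R{Fil}^1_s L_{K'}[\frac1p]$ has rank $\leq d-1$); and the obstruction to lifting the property "order $p^b$" through the quotient is measured by the image of $\R{Fil}^1_s L_{K'}$, whose torsion contribution after mod-$J$ reduction is bounded using the freeness over $C_s$. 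I expect this to follow the structure of the proof of \cite[Lemma 4.2.7]{liu-filtration-torsion}, with Lemma~\ref{lem:3.11} (freeness of $L_{K'}$ over $C_{K'}$, valid now without assuming $k$ finite) replacing the corresponding input there, and Corollary~\ref{cor:3.10} absorbing the failure of $q$ to be an isomorphism. I would carry out the bound by choosing a $C_s$-basis $e = e_1, \dots, e_d$ of $\R{Fil}^0_s L_{K'}$ adapted so that $e_2, \dots, e_d$ span a complement mapping into $\R{Fil}^1_s L_{K'}$ up to torsion, reduce everything modulo $(J, p^l)$, and compare annihilators, invoking $b \geq ld+1$ at the end.
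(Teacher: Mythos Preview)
Your setup is right: you correctly invoke Lemma~\ref{lem:3.11} to get that $\R{Fil}^0_s L_{K'}$ is free of rank $d$ over $C_s$, and you correctly identify the problem as controlling the image $\bar N$ of $\R{Fil}^1_s L_{K'}$ inside $\bar M = \R{Fil}^0_s L_{K'}/J$. But your mechanism for the ``hard part'' has a real gap.

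First, the claim that $\tilde{\R{Fil}}^1_s L_{K'}$ is generated by at most $d-1$ elements over $C_s$ is unjustified and in general false: $C_s$ is not a domain, so submodules of $C_s^d$ need not have a bounded number of generators, and the rank after inverting $p$ says nothing about integral generators. Second, your arithmetic does not close: you assert the order of $\bar e$ in $\R{gr}^0_s L_{K'}/J$ is at least $p^{b-ld}$, but $b-ld \geq 1$ only yields $\bar e \neq 0$, whereas you need order $> p^l$. More fundamentally, tracking one element cannot work directly: if (for contradiction) $p^l$ annihilated all of $\bar M/\bar N$, then \emph{every} element would have order $\leq p^l$, so no choice of $e$ helps without an additional global input.

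That global input, which is the key idea you are missing, is a determinant argument. The paper (following Liu) argues by contradiction: assume $p^l$ kills $\bar M/\bar N$. Since $\bar M$ is free of rank $d$ over the truncated DVR $\cO_{T_s}/p^b\cO_{T_s}$, Smith normal form gives an $\cO_{T_s}/p^b$-basis $\bar e_1,\dots,\bar e_d$ of $\bar M$ with $\bar N = \bigoplus_i (\cO_{T_s}/p^b)\,\bar\pi_s^{a_i}\bar e_i$ and $\bar\pi_s^{a_i}\mid p^l$. Lift the $\bar e_i$ to a $C_s$-basis $e_1,\dots,e_d$ of $\R{Fil}^0_s L_{K'}$ and lift each $\bar\pi_s^{a_i}\bar e_i$ to $y_i \in \tilde{\R{Fil}}^1_s L_{K'}$; writing $(y_1,\dots,y_d)=(e_1,\dots,e_d)X$ one gets $\det X \equiv \bar\pi_s^{a}\pmod{J}$ with $a=\sum a_i$, and since $\bar\pi_s^{a}\mid p^{ld}$ and $b\geq ld+1$, this is nonzero in $C_s$. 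By the adjugate relation $\det(X)\cdot e_j \in \tilde{\R{Fil}}^1_s L_{K'}$ for every $j$, so $\det X$ annihilates the free $C[\tfrac1p]_s$-module $\R{gr}^0_s D_{\R{dR}}^{K'}(\Lambda[\tfrac1p])$ of rank $d_s\neq 0$, a contradiction. The point is that the hypothesis ``$p^l$ kills the whole quotient'' is converted, via $d$ lifts and a determinant, into a single nonzero scalar that kills a free module; your single-element tracking does not access this.
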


\begin{proof}
This follows from essentially the same argument as in the proof of \cite[Lemma 4.2.7]{liu-filtration-torsion}. For any $C$-module $M$, denote $M/JM$ by $M/J$. We have the following right exact sequence:
\[
 \R{Fil}^1_s L_{K'} \arr \R{Fil}^0_s L_{K'} \arr \R{gr}^0_s L_{K'} \arr 0.
\]	
Let $\tilde{\R{Fil}}^1_s L_{K'}$ be the image of $\R{Fil}^1_s L_{K'}$ in $\R{Fil}^0_s L_{K'}$ under the first map in the above sequence. We then obtain the following right exact sequence
\[
\tilde{\R{Fil}}^1_s L_{K'}/J \arr \R{Fil}^0_s L_{K'}/J \arr \R{gr}^0_s L_{K'}/J \arr 0.
\]
Denote $\bar{M} \coloneqq \R{Fil}^0_s L_{K'}/J$ and let $\bar{N} \subset \bar{M}$ be the submodule given by the image of $\tilde{\R{Fil}}^1_s L_{K'}/J$. Then $\bar{M}/\bar{N} = \R{gr}^0_s L_{K'}/J$. 

Suppose that $p^l$ annihilates $\bar{M}/\bar{N}$. By Lemma \ref{lem:3.11}, $\bar{M}$ is a finite free $\cO_{T_s}/p^b\cO_{T_s}$-module of rank $d$. Let $\bar{\pi}_s$ be a uniformizer of $\cO_{T_s}$. Then there exists an $\cO_{T_s}/p^b\cO_{T_s}$-basis $\bar{e}_1, \ldots, \bar{e}_d$ of $\bar{M}$ such that
\[
\bar{N} \cong \bigoplus_{i=1}^d (\cO_{T_s}/p^b\cO_{T_s})\cdot(\bar{\pi}_s^{a_i}\bar{e}_i) 
\]
for some nonnegative integers $a_i$. We have $\bar{\pi}_s^{a_i} ~|~ p^l$ for all $i = 1, \ldots, d$. Let $e_1, \ldots, e_d$ be a $C_s$-basis of $\R{Fil}^0_s L_{K'}$ which lifts $\bar{e}_1, \ldots, \bar{e}_d$. For $i = 1, \ldots, d$, let $y_i \in \tilde{\R{Fil}}^1_s L_{K'}$ which lifts $\bar{\pi}_s^{a_i}\bar{e}_i$. If $X$ denotes the $d \times d$-matrix such that $(y_1, \ldots, y_d) = (e_1, \ldots, e_d)X$, then $\R{det}(X) = \bar{\pi}_s^a+j$ with $a = \sum_{i=1}^d a_i$ and $j \in J$. Since $b \geq ld+1$, we have $\bar{\pi}_s^a \neq 0$ in $C_s/J$, and thus $\R{det}(X) \neq 0$ in $C_s$. On the other hand, let $\bar{z}_1, \ldots, \bar{z}_{d_s}$ be a $C[\frac{1}{p}]_s$-basis of $\R{gr}^0_s(D_{\R{dR}}^{K'}(\Lambda[\frac{1}{p}]))$. We have $\R{det}(X)(e_1, \ldots, e_d) \subset \R{Fil}^1_s (D_{\R{dR}}^{K'}(\Lambda[\frac{1}{p}]))$, and therefore $\R{det}(X)\bar{z}_i = 0$. This gives a contradiction.   

\end{proof}

\noindent\textit{Proof of Theorem \ref{thm:3.3}}. Given above results, Theorem \ref{thm:3.3} follows from essentially the same argument as in the proof of \cite[Theorem 4.3.4]{liu-filtration-torsion}, except that we do not reduce to the case where $E$ contains the Galois closure of $K'$. We also remark that the proof of \cite{liu-filtration-torsion} reduces to the case $A'$ is local. But $A'$ is not necessarily finite over $\cO_E$ after such reduction, which has been overlooked in \cite{liu-filtration-torsion}. This is a very minor gap, and we remedy it by only reducing to the case $A'$ is good.

We first reduce to the case where $A = \cO_E$ and $A'$ is good. For this, let $B \coloneqq A \otimes_{\bZ_p}\bQ_p$ and $B' \coloneqq A'\otimes_{\bZ_p}\bQ_p$. We have $B_{\R{red}} = B/\cN(B) \cong \prod_{j} E_j$ and $B'_{\R{red}} \cong \prod E_j'$ for some $E_j, E_j'$ finite over $E$. Let $L$ be a finite Galois extension of $E$ containing all Galois closures of $E_j, E_j'$. Denote $\cO_L\otimes_{\cO_E}(*)$ by $(*)_{\cO_L}$ for $(*)$ being $A, A', \rho, \rho', I$, and $\beta$. Note that $(A_{\cO_L}[\frac{1}{p}])_{\R{red}} = L\otimes_E B_{\R{red}} = L\otimes_E \prod E_j \cong \prod_i L$ with $E_j$ embedding into $L$ differently, and similarly for $(A'_{\cO_L}[\frac{1}{p}])_{\R{red}}$. This induces the natural map $\psi_l: A_{\cO_L} \arr (A_{\cO_L})[\frac{1}{p}] \twoheadrightarrow L$ to the $l$-th factor of $\prod_i L$. By Lemma \ref{lem:3.5} and \ref{lem:3.7}, it suffices to show (assuming $I \subset p^{c_1}A$ for a suitable constant $c_1$) that $L\otimes_{\psi_l, A_{\cO_L}}(\rho)_{\cO_L}$ has Hodge-Tate type $\bm{v}$. Let $A_l = \psi_l(A_{\cO_L})$ and $I_l = \psi_l(I_{\cO_L})$. $\psi_l: A_{\cO_L} \twoheadrightarrow A_l \subset L$ is a morphism of $\cO_L$-algebras, so $A_l = \cO_L$ (and analogously for $A'_{\cO_L}$), and we have a natural projection $\gamma_l: A_{\cO_L}/I_{\cO_L} \twoheadrightarrow A_l/I_l$. Thus, by replacing $E$ by $L$ and $A'$ by $A'_{\cO_L}$, we can assume that $A = \cO_E$ and that $A'$ is good.

Let $T$ denote the torsion representation $A/I\otimes_A \rho \cong A'/I'\otimes_{A'} \rho' \in \R{Rep}_{\R{tor}, \cO_E}^{\R{pst}, K', r}$ where $I' = \R{ker}(\beta)$. We denote by $j$ and $j'$ the two lifts $\rho$ and $\rho'$ of $T$ respectively. Write $L_{K'} \coloneqq M_{\R{st}}(\rho)_{K'}, L'_{K'} \coloneqq M_{\R{st}}(\rho')_{K'}, M_{K'} \coloneqq M_{\R{st}, j}(T)_{K'}$, and $M'_{K'} \coloneqq M_{\R{st}, j'}(T)_{K'}$. We have $\R{gr}^i_sM_{K'} \cong \R{gr}^i_s L_{K'} / I\R{gr}^i_sL_{K'}$ and $\R{gr}^i_sM'_{K'} \cong \R{gr}^i_s L'_{K'} / I'\R{gr}^i_sL'_{K'}$ for $s = 1, \ldots, t$. By Corollary \ref{cor:2.8} and Proposition \ref{prop:2.9}, there exist morphisms of $\cO_{T_s}$-modules $g_s^i: \R{gr}^i_sM_{K'} \arr \R{gr}^i_sM'_{K'}$ and $h_s^i: \R{gr}^i_sM'_{K'} \arr \R{gr}^i_sM_{K'}$ such that $g_s^i \circ h_s^i = p^{2c} \R{Id}|_{\R{gr}^i_sM'_{K'}}$ and $h_s^i \circ g_s^i = p^{2c}\R{Id}|_{\R{gr}^i_sM_{K'}}$.

Now, we set $\tilde{c} = \tilde{c}(K', r, d) \coloneqq (2c+c')d+1$ where $c$ and $c'$ are given as in Theorem \ref{thm:2.7} and Lemma \ref{lem:3.9} respectively. Assume $I \subset p^{\tilde{c}}A = p^{\tilde{c}}\cO_E$. We claim that if $\R{gr}^0_s(D_{\R{dR}}^{K'}(V')) \neq 0$, then $\R{gr}^0_s(D_{\R{dR}}^{K'}(V)) \neq 0$. Suppose $\R{gr}^0_s(D_{\R{dR}}^{K'}(V)) = 0$. By Corollary \ref{cor:3.10}, $\R{gr}^0_s M_{K'}$ is killed by $p^{c'}$. But by Lemma \ref{lem:3.12}, there exists $x \in \R{gr}^0_s M'_{K'}$ such that $p^{c'+2c}x \neq 0$. We have a contradiction since $p^{c'+2c}x = g^0_s(p^{c'}h^0_s(x))$. 

On the other hand, let $B' = A'[\frac{1}{p}]$, and denote $d_0 = \R{dim}_{T_s} \R{gr}^0_s(D_{\R{dR}}^{K'}(V))$. We claim (assuming $I \subset p^{\tilde{c}}\cO_E$) that $d_0 \leq \R{dim}_{T_s} \R{gr}^0_s(D_{\R{dR}}^{K'}(V'))$. For this, note that as an $\cO_{T_s}$-module, $\R{gr}_s^0L_{K'} = N_{\R{tor}}\oplus N$ where $N_{\R{tor}}$ is the torsion submodule of $\R{gr}_s^0L_{K'}$ and $N$ is a finite free $\cO_{T_s}$-module of rank $d_0$. By Corollary \ref{cor:3.10}, 
\[
\R{gr}^0_s M_{K'} \cong N_{\R{tor}}\oplus \bigoplus_{i=1}^{d_0} \cO_{T_s}/I\cO_{T_s}.
\]
Let $\bar{N} \coloneqq p^{c'}\bigoplus_{i=1}^{d_0} \cO_{T_s}/I\cO_{T_s}$. Then $p^{c'}\R{gr}^0_s M_{K'} = \bar{N}$, again by Corollary \ref{cor:3.10}, and therefore $h_s^0(g_s^0(\bar{N})) \cong \bigoplus_{i=1}^{d_0} p^{2c+c'}\cO_{T_s}/I\cO_{T_s}$. Since $p^{c'}\R{gr}_s^0L'_{K'}$ surjects onto $h_s^0(p^{c'}\R{gr}_s^0M'_{K'})$ and $g_s^0(\bar{N}) \subset p^{c'}\R{gr}_s^0M'_{K'}$, we have by Corollary \ref{cor:3.10} that the $\cO_{T_s}$-rank of $p^{c'}\R{gr}_s^0L'_{K'}$ is at least $d_0$. Thus, the $\cO_{T_s}$-rank of $\R{gr}_s^0L'_{K'}$ is at least $d_0$, and $\R{dim}_{T_s} \R{gr}^0_s(D_{\R{dR}}^{K'}(V')) \geq d_0$.    

Hence, assuming $I \subset p^{\tilde{c}}\cO_E$, we have $\R{gr}^0_s(D_{\R{dR}}^{K'}(V)) \neq 0$ if and only if $\R{gr}^0_s(D_{\R{dR}}^{K'}(V')) \neq 0$.  

For the last step, we set $c_1 = \tilde{c}(K', dr, d)$ and assume $I \subset p^{c_1}\cO_E$. It suffices to show that for each $i$,
\[
\R{dim}_{T_s} \R{gr}^i_s(D_{\R{dR}}^{K'}(V)) = \R{rank}_{B'_{T_s}} \R{gr}^i_s(D_{\R{dR}}^{K'}(V')).
\]
Suppose that the above equality fails for some $i$, and let $i_*$ be the smallest such number. Write $d_i = \R{dim}_{T_s} \R{gr}^i_s(D_{\R{dR}}^{K'}(V))$ and $d_i' = \R{rank}_{B'_{T_s}} \R{gr}^i_s(D_{\R{dR}}^{K'}(V'))$. Suppose first $d_{i_*} > d'_{i_*}$. We set $t_1 = \sum_{i\leq i_*}d_i$ and $t_2 = \sum_{i\leq i_*}id_i$. Let $\tilde{i} = \R{max}\{i ~|~ \sum_{j\leq i}d'_j \leq t_1\}$ and $t' = \sum_{i \leq \tilde{i}}d'_i$. Then $i_* \leq \tilde{i}$ and $t' \leq t_1$. Let 
\[
t'' = (\sum_{i\leq\tilde{i}}id'_i)+(t_1-t')(\tilde{i}+1).
\]
We have $t_2 < t''$. Moreover, $t_2$ (resp. $t''$) is the smallest $i$ such that $\R{gr}^i_s(D_{\R{dR}}^{K'}(\bigwedge^{t_1}V))$ (resp. $\R{gr}^i_s(D_{\R{dR}}^{K'}(\bigwedge^{t_1}V'))$) is nontrivial. Let $\chi$ be a crystalline character such that $\R{gr}^i_s(D_{\R{dR}}^{K'}(\chi)) \neq 0$ only when $i = -t_2$. Then $\R{gr}^0_s(D_{\R{dR}}^{K'}(\chi\bigwedge^{t_1}V))$ is nontrivial. From the above result applied to $\chi\bigwedge^{t_1}V$ and $\chi\bigwedge^{t_1}V'$, we see that $\R{gr}^0_s(D_{\R{dR}}^{K'}(\chi\bigwedge^{t_1}V'))$ is also nontrivial, leading to a contradiction. 

By switching the roles of $V$ and $V'$, it follows similarly that we cannot have $d_{i_*} < d'_{i_*}$. This completes the proof. $\hspace{26em}\qed$

\subsection{Galois Type} \label{sec:3.2}

We now study the Galois types of potentially semi-stable representations. As in Section \ref{sec:3.1}, let $E$ be a finite field over $\bQ_p$, and let $B$ be a finite $E$-algebra. Let $V_B$ be a free $B$-module of rank $d$ equipped with a potentially semi-stable continuous $\cG_K$-action. Let 
\[
D_{\R{pst}}(V_B) = \lim_{\stackrel{\longrightarrow}{K \subset K''}} (B_{\R{st}}\otimes_{\bQ_p}V_B^{\vee})^{\cG_{K''}},
\]
where the limit goes over finite extensions $K''$ of $K$ contained in $\bar{K}$. Denote by $K_0^{\R{ur}}$ the union of finite unramified extensions of $K_0$ contained in $\bar{K}$. We have $\R{dim}_{K_0^{\R{ur}}} D_{\R{pst}}(V_B) = \R{dim}_{\bQ_p} V_B$. 

\begin{lem} \label{lem:3.13}
Let $B'$ be a finite $B$-algebra, and write $V_{B'} = B'\otimes_B V_B$. Then $V_{B'}$ is potentially semi-stable as a $\cG_K$-representation, and $D_{\R{pst}}(V_{B'}) \cong B'\otimes_B D_{\R{pst}}(V_B)$. If $V_B$ becomes semi-stable over $L \supset K$, then so does $V_{B'}$. Furthermore, $D_{\R{pst}}(V_B)$ is a free $B\otimes_{\bQ_p}K_0^{\R{ur}}$-module of rank $d$. 	
\end{lem}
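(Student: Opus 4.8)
The plan is to bootstrap everything from the semi-stable statements already available for the finite-level functors $D_{\R{st}}^{K''}$ and then pass to the colimit defining $D_{\R{pst}}$. Fix a finite extension $L/K$ over which $V_B$ is semi-stable. Choosing a surjection of $B$-modules $B^{\oplus n}\twoheadrightarrow B'$ and tensoring with the free $B$-module $V_B$ yields a $\cG_K$-equivariant surjection $V_B^{\oplus n}\twoheadrightarrow V_{B'}$ of $\bQ_p$-representations; since $V_B^{\oplus n}$ is semi-stable over $L$ and the category of semi-stable $\cG_L$-representations over $\bQ_p$ is closed under quotients (a standard dimension count using left-exactness of the contravariant functor $D_{\R{st}}^L$ together with $\dim_{L_0}D_{\R{st}}^L(W)\le\dim_{\bQ_p}W$, with equality precisely when $W$ is semi-stable), it follows that $V_{B'}$ is semi-stable over $L$; in particular $V_{B'}$ is potentially semi-stable and becomes semi-stable over $L$.

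For the base-change formula, recall that tensor products commute with filtered colimits and that the finite extensions $K''/K$ with $K''\supseteq L$ are cofinal in the indexing system; so it suffices to produce isomorphisms $D_{\R{st}}^{K''}(V_{B'})\cong B'\otimes_B D_{\R{st}}^{K''}(V_B)$, compatible with the transition maps, for all such $K''$. Over any such $K''$ both representations are semi-stable, and this is exactly the argument of Lemma \ref{lem:3.1} (cf. \cite[Lemma 4.1.2]{liu-filtration-torsion}): apply the exact functor $D_{\R{st}}^{K''}$ to a finite $B$-module presentation of $B'$ tensored with $V_B$. Here the transition maps $D_{\R{st}}^{K''}(V_B)\hookrightarrow D_{\R{st}}^{K'''}(V_B)$, coming from $\cG_{K'''}\subset\cG_{K''}$ acting on $B_{\R{st}}$-invariants, are $B$-linear and, after extension of scalars along $K_0''\hookrightarrow K_0'''$, become the standard base-change isomorphisms $K_0'''\otimes_{K_0''}D_{\R{st}}^{K''}(W)\xrightarrow{\sim}D_{\R{st}}^{K'''}(W)$ for semi-stable $\cG_{K''}$-representations $W$; passing to the colimit over $K''\supseteq L$ then gives $D_{\R{pst}}(V_{B'})\cong B'\otimes_B D_{\R{pst}}(V_B)$.

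For freeness, let $L_0\supseteq K_0$ be the maximal unramified subextension of $L/K_0$, so that $L/L_0$ is totally ramified and $L_0$ is absolutely unramified. The base-change isomorphisms above identify $D_{\R{st}}^{K''}(V_B)$ with $K_0''\otimes_{L_0}D_{\R{st}}^{L}(V_B)$ for $K''\supseteq L$, and since $\varinjlim_{K''\supseteq L}K_0''=K_0^{\R{ur}}$ we obtain $D_{\R{pst}}(V_B)\cong K_0^{\R{ur}}\otimes_{L_0}D_{\R{st}}^{L}(V_B)\cong (B\otimes_{\bQ_p}K_0^{\R{ur}})\otimes_{B\otimes_{\bQ_p}L_0}D_{\R{st}}^{L}(V_B)$. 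It therefore remains to see that $D_{\R{st}}^{L}(V_B)$ is free of rank $d$ over $B\otimes_{\bQ_p}L_0$, which is the argument of Lemma \ref{lem:3.1}(2) applied with $(L,L_0)$ in place of $(K',K_0)$; the only point that changes is the transitivity claim, namely that the Witt Frobenius of $L_0$ permutes the maximal ideals of $E_l\otimes_{\bQ_p}L_0$ transitively. This holds because $L_0$ is absolutely unramified, so $E_l\cap L_0$ is a finite unramified (hence Galois) extension of $\bQ_p$ whose degree equals the number of those maximal ideals and on which the Witt Frobenius restricts to a generator of the Galois group.

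The step I expect to require the most care is the colimit bookkeeping in the last two paragraphs: checking that the finite-level isomorphisms $D_{\R{st}}^{K''}(V_{B'})\cong B'\otimes_B D_{\R{st}}^{K''}(V_B)$ and the semi-stable base-change isomorphisms are genuinely compatible with the transition maps and with the $B\otimes_{\bQ_p}K_0''$-module structures, so that one may pass to the colimit and transport freeness from level $L$ to the limit. Everything else is a routine adaptation of Lemma \ref{lem:3.1}.
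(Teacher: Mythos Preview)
Your proposal is correct and is precisely the approach the paper intends: the paper's proof is the single line ``It follows from essentially the same proof as for Lemma~\ref{lem:3.1},'' and what you have written is a faithful unpacking of that reference, including the reduction to $D_{\R{st}}^{L}$ via $D_{\R{pst}}(V_B)\cong K_0^{\R{ur}}\otimes_{L_0}D_{\R{st}}^{L}(V_B)$ and the Frobenius-transitivity argument on $E_l\otimes_{\bQ_p}L_0$. The colimit bookkeeping you flag as delicate is routine and not singled out by the paper either.
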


\begin{proof}
It follows from essentially the same proof as for Lemma \ref{lem:3.1}.	
\end{proof}

$D_{\R{pst}}(V_B)$ is equipped with a $K_0^{\R{ur}}$-semilinear action of $\cG_K$, and thus a $K_0^{\R{ur}}$-linear action of the inertia group $I_K$. The Frobenius action commutes with the $I_K$-action, so $\R{tr}(\sigma|D_{\R{pst}}(V_B)) \in B$ for all $\sigma \in I_K$. 

Let $D_E$ be an $E$-vector space of dimension $d$, and let $D_{E, K} = D_E \otimes_{\bQ_p} K$ equipped with a filtration giving a Hodge-Tate type $\bm{v}$. Fix a representation 
\[
\tau: I_K \arr \R{End}_E (D_E)
\]
with an open kernel. Note that there exists an $I_K$-stable $\cO_E$-lattice in $D_E$, so $\R{tr}(\tau(\sigma))\in \cO_E$ for all $\sigma \in I_K$. We say $V_B$ has \textit{Galois type} $\tau$ if the $I_K$-representation $D_{\R{pst}}(V_B)$ is equivalent to $\tau$, i.e., $\R{tr}(\sigma | D_{\R{pst}}(V_B)) = \R{tr}(\tau(\sigma))$ for all $\sigma \in I_K$.

Let $L/K$ be a finite Galois extension contained in $\bar{K}$ such that $I_L \subset \R{ker}(\tau)$. Here, $I_L$ denotes the inertia subgroup of $\cG_L$. $D_{\R{st}}^L(V_B) = (B_{\R{st}}\otimes_{\bQ_p}V_B^{\vee})^{\cG_L}$ is an $L_0$-vector space where $L_0$ is the maximal unramified subextension of $K_0$ contained in $L$. If $V_B$ is semi-stable over $L$, then $D_{\R{pst}}(V_B) \cong K_0^{\R{ur}}\otimes_{L_0}D_{\R{st}}^L(V_B)$. Therefore, $V_B$ has Galois type $\tau$ if and only if $V_B$ is semi-stable over $L$ and $\R{tr}(\sigma|D_{\R{st}}^L(V_B)) = \R{tr}(\tau(\sigma))$ for all $\sigma \in I_{L/K}$, where $I_{L/K}$ is the inertia subgroup of $\R{Gal}(L/K)$.

\begin{lem} \label{lem:3.14}
Let $\alpha: B \arr B'$ be an $E$-algebra morphism between finite $E$-algebras. Suppose $V$ is semi-stable over $L$. Then for all $\sigma \in I_{L/K}$, we have $\R{tr}(\sigma|D_{\R{st}}^L(V_{B'})) = \alpha(\R{tr}(\sigma|D_{\R{st}}^L(V_{B})))$. In particular, if $V_B$ has Galois type $\tau$, then so does $V_{B'}$. If $\alpha$ is injective, then the converse is also true, i.e., $V_B$ has Galois type $\tau$ if and only if $V_{B'}$ has Galois type $\tau$. 	
\end{lem}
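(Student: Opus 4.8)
The plan is to reduce everything to a statement about compatibility of the functor $D_{\R{st}}^L$ with base change along $\alpha$, and then to use the fact that the trace of an $I_{L/K}$-element can be computed after any faithfully flat extension. First I would observe that, by Lemma~\ref{lem:3.13} applied with $L$ in place of $K'$ (or rather the analogous statement for $D_{\R{st}}^L$), we have a canonical isomorphism $D_{\R{st}}^L(V_{B'}) \cong B'\otimes_{B} D_{\R{st}}^L(V_B)$, and this isomorphism is compatible with the $\R{Gal}(L/K)$-action on both sides, since the $\R{Gal}(L/K)$-action is functorial in $V$ and the isomorphism $(B_{\R{st}}\otimes_{\bQ_p} V_{B'}^{\vee})^{\cG_L} \cong B'\otimes_B (B_{\R{st}}\otimes_{\bQ_p}V_B^{\vee})^{\cG_L}$ arises purely from $V_{B'}^{\vee} = B'\otimes_B V_B^{\vee}$ together with the commutation of $(-)^{\cG_L}$ with the flat base change $B\to B'$. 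In particular for each $\sigma\in I_{L/K}$ the endomorphism $\sigma|D_{\R{st}}^L(V_{B'})$ is identified with $\R{id}_{B'}\otimes_{B} (\sigma|D_{\R{st}}^L(V_B))$.

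Next I would take traces. Since $D_{\R{st}}^L(V_B)$ is a finite free $B\otimes_{\bQ_p} L_0$-module of rank $d$ (again by the analogue of Lemma~\ref{lem:3.13}), and similarly $D_{\R{st}}^L(V_{B'})$ is finite free over $B'\otimes_{\bQ_p}L_0$, the $\sigma$-action is a $B\otimes_{\bQ_p}L_0$-linear (resp.\ $B'\otimes_{\bQ_p}L_0$-linear) endomorphism of a finite free module. Choosing a $B\otimes_{\bQ_p}L_0$-basis of $D_{\R{st}}^L(V_B)$ gives, after base change, a $B'\otimes_{\bQ_p}L_0$-basis of $D_{\R{st}}^L(V_{B'})$, and the matrix of $\sigma$ in the second basis is the image of the matrix of $\sigma$ in the first under $\alpha\otimes\R{id}_{L_0}$. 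Taking the matrix trace and then applying $\R{tr}_{(B\otimes L_0)/B}$ (resp.\ $\R{tr}_{(B'\otimes L_0)/B'}$), which are compatible via $\alpha$ because $B'\otimes_{\bQ_p}L_0 = B'\otimes_B(B\otimes_{\bQ_p}L_0)$, yields $\R{tr}(\sigma|D_{\R{st}}^L(V_{B'})) = \alpha(\R{tr}(\sigma|D_{\R{st}}^L(V_B)))$. Alternatively one can argue even more cleanly: trace commutes with arbitrary base change of modules over commutative rings, so $\R{tr}_{B'}(\sigma|D_{\R{st}}^L(V_{B'})) = \alpha(\R{tr}_B(\sigma|D_{\R{st}}^L(V_B)))$ directly, once one knows the identification of $\sigma$-actions from the first paragraph.

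For the ``in particular'' and the converse: if $V_B$ has Galois type $\tau$, then by the characterization recorded just before the lemma, $V_B$ is semi-stable over $L$ and $\R{tr}(\sigma|D_{\R{st}}^L(V_B)) = \R{tr}(\tau(\sigma))$ for all $\sigma\in I_{L/K}$; by Lemma~\ref{lem:3.13} $V_{B'}$ is also semi-stable over $L$, and applying $\alpha$ to the trace identity and using the displayed compatibility gives $\R{tr}(\sigma|D_{\R{st}}^L(V_{B'})) = \alpha(\R{tr}(\tau(\sigma))) = \R{tr}(\tau(\sigma))$ (the last equality since $\R{tr}(\tau(\sigma))\in\cO_E\subset B$ and $\alpha$ is an $E$-algebra map, hence fixes the image of $\cO_E$). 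Thus $V_{B'}$ has Galois type $\tau$. For the converse when $\alpha$ is injective: $\alpha$ injective means $B\hookrightarrow B'$, and the trace identity $\R{tr}(\sigma|D_{\R{st}}^L(V_{B'})) = \alpha(\R{tr}(\sigma|D_{\R{st}}^L(V_B)))$ together with injectivity of $\alpha$ shows that $\R{tr}(\sigma|D_{\R{st}}^L(V_{B'})) = \R{tr}(\tau(\sigma))$ forces $\R{tr}(\sigma|D_{\R{st}}^L(V_B)) = \R{tr}(\tau(\sigma))$; one still needs that $V_B$ is semi-stable over $L$, which follows from $V_{B'}$ being so together with $V_B\subset V_{B'}$ (a subrepresentation of a semi-stable representation over $L$ is semi-stable over $L$, since $D_{\R{st}}^L$ is exact on the relevant category and dimensions can only drop).

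The main obstacle I anticipate is the careful bookkeeping of the $\R{Gal}(L/K)$-equivariance of the base-change isomorphism $D_{\R{st}}^L(V_{B'})\cong B'\otimes_B D_{\R{st}}^L(V_B)$ together with the semilinearity of the action over $L_0$; one must be sure that the trace being referred to is the $B$-linear (resp.\ $B'$-linear) trace of the $L_0$-semilinear operator, computed correctly, and that the two notions match up under $\alpha$. Since this is essentially the same diagram chase as in Lemma~\ref{lem:3.13} (which we are allowed to invoke), the argument is routine, but it is the only place where something could go wrong.
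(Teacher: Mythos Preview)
Your proposal is correct and follows essentially the same approach as the paper: invoke the base-change isomorphism $D_{\R{st}}^L(V_{B'}) \cong B'\otimes_B D_{\R{st}}^L(V_B)$ from (the analogue of) Lemma~\ref{lem:3.13}, note its $I_{L/K}$-equivariance, and read off the trace identity; the paper compresses all of this into two lines and declares the rest immediate. Two minor clean-ups: the map $\alpha\colon B\to B'$ need not be flat, so the base-change isomorphism is justified by the dimension-counting of Lemma~\ref{lem:3.1}/\ref{lem:3.13} rather than by ``commutation of $(-)^{\cG_L}$ with flat base change''; and your closing argument that $V_B$ is semi-stable over $L$ is superfluous, since that is already a hypothesis of the lemma.
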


\begin{proof}
$D_{\R{pst}}(V_{B'}) \cong B'\otimes_B D_{\R{pst}}(V_B)$ by Lemma \ref{lem:3.13}, so 
\[
\R{tr}(\sigma|D_{\R{st}}^L(V_{B'})) = \alpha(\R{tr}(\sigma|D_{\R{st}}^L(V_{B})))
\]
for all $\sigma \in I_{L/K}$. The remaining statements follow immediately.   	
\end{proof}

Consider the case when $B$ is local. If $E'$ is its residue field, then $E'$ is finite over $E$ and $B$ is naturally an $E'$-algebra. Note that the $I_K$-action on $D_{\R{pst}}(V_B)$ has an open kernel. Since the cohomology of a finite group with coefficients in $E'\otimes_{\bQ_p} K_0^{\R{ur}}$ is trivial in all positive degrees, it follows from the deformation theory that the representation $D_{\R{pst}}(V_B)$ arises from a representation over $E'\otimes_{\bQ_p}K_0^{\R{ur}}$. Thus, $V_B$ has Galois type $\tau$ if and only if $V_{E'} = E'\otimes_B V_B$ has Galois type $\tau$. For a general finite $E$-algebra $B$, we have isomorphisms $B \cong \prod_{i=1}^n B_{\fm_i}$ and $B_{\R{red}} \cong \prod_{i=1}^n E_i$, where $\fm_1, \ldots, \fm_n$ are the maximal ideals of $B$ and $E_i = B_{\fm_i}/\fm_iB_{\fm_i}$. Let $V_{E_i} = E_i \otimes_B V_B$. The following lemmas are analogous to Lemma \ref{lem:3.5} and \ref{lem:3.7}.

\begin{lem} \label{lem:3.15}
$V_B$ has Galois type $\tau$ if and only if $V_{E_i}$ has Galois type $\tau$ for each $i = 1, \ldots, n$.	
\end{lem}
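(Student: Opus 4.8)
The plan is to reduce the statement for a general finite $E$-algebra $B$ to the case where $B$ is local, which is already handled by the deformation-theoretic discussion preceding the lemma. First I would observe that the decomposition $B \cong \prod_{i=1}^n B_{\fm_i}$ induces a $\cG_K$-equivariant decomposition $V_B \cong \bigoplus_{i=1}^n V_{B_{\fm_i}}$ where $V_{B_{\fm_i}} = B_{\fm_i}\otimes_B V_B$, since this decomposition is simply multiplication by the idempotents of $B$, which are $\cG_K$-fixed. Because $D_{\R{pst}}(-)$ is additive (this follows from Lemma \ref{lem:3.13} applied to each factor, or directly from the construction), we get $D_{\R{pst}}(V_B) \cong \bigoplus_{i=1}^n D_{\R{pst}}(V_{B_{\fm_i}})$ as $B\otimes_{\bQ_p}K_0^{\R{ur}}$-modules compatibly with the $I_K$-action, so $\R{tr}(\sigma | D_{\R{pst}}(V_B)) = \sum_{i=1}^n \R{tr}(\sigma | D_{\R{pst}}(V_{B_{\fm_i}}))$ for all $\sigma \in I_K$.

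Next I would unwind what it means for $V_B$ to have Galois type $\tau$ in terms of the factors. The subtlety is that $\R{tr}(\tau(\sigma))$ is a fixed element of $\cO_E \subset E$, whereas each $\R{tr}(\sigma | D_{\R{pst}}(V_{B_{\fm_i}}))$ lies in $B_{\fm_i}$; the equality $\R{tr}(\sigma|D_{\R{pst}}(V_B)) = \R{tr}(\tau(\sigma))$ takes place in $B$. Projecting to the residue fields, $V_B$ has Galois type $\tau$ forces, for each $i$, that $\R{tr}(\sigma | D_{\R{pst}}(V_{E_i})) = \R{tr}(\tau(\sigma))$ in $E_i$ (using $D_{\R{pst}}(V_{E_i}) \cong E_i\otimes_{B_{\fm_i}} D_{\R{pst}}(V_{B_{\fm_i}})$ from Lemma \ref{lem:3.13}, or rather from Lemma \ref{lem:3.14} with $\alpha$ the quotient $B_{\fm_i}\twoheadrightarrow E_i$). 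Conversely, if each $V_{E_i}$ has Galois type $\tau$, then by the deformation-theoretic argument recalled just before the lemma each $V_{B_{\fm_i}}$ has Galois type $\tau$, i.e. $\R{tr}(\sigma|D_{\R{pst}}(V_{B_{\fm_i}})) = \R{tr}(\tau(\sigma))$ in $B_{\fm_i}$ for all $\sigma \in I_K$ (where we regard $\R{tr}(\tau(\sigma)) \in \cO_E \subset E \subset B_{\fm_i}$); summing over $i$ via the additivity above, we conclude $\R{tr}(\sigma | D_{\R{pst}}(V_B)) = \sum_i \R{tr}(\tau(\sigma))$.

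The last point to address is the compatibility of this sum with the claim that $V_B$ itself has Galois type $\tau$: one must check that the relevant comparison of traces is to be read componentwise, i.e. that "$V_B$ has Galois type $\tau$" should be interpreted as each component $V_{B_{\fm_i}}$ having Galois type $\tau$, equivalently that the $I_K$-representation $D_{\R{pst}}(V_B)$ over $B\otimes_{\bQ_p}K_0^{\R{ur}}$ restricts on each factor to something equivalent to $\tau$. With the definition from the text — $\R{tr}(\sigma | D_{\R{pst}}(V_B)) = \R{tr}(\tau(\sigma))$ for all $\sigma \in I_K$, read in $B$ after embedding $\cO_E \hookrightarrow B$ — the forward implication is the projection to residue fields just described, and the reverse implication follows by gluing the factors. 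I expect the only real friction to be bookkeeping the coefficient rings carefully (keeping track of whether traces are compared in $B$, in $B_{\fm_i}$, or in $E_i$, and that the embedding $\cO_E \hookrightarrow B$ lands diagonally); there is no genuine mathematical obstacle, since additivity of $D_{\R{pst}}$ and the already-established local case do all the work. In the write-up I would simply cite Lemma \ref{lem:3.13}, Lemma \ref{lem:3.14}, and the local-case discussion, and state that the argument is formally identical to that of Lemma \ref{lem:3.5}.
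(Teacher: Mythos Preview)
Your proposal is correct and is exactly the argument the paper has in mind: the paper's one-line proof (``It follows directly from Lemma \ref{lem:3.14}'') is shorthand for combining Lemma \ref{lem:3.14} on the projections $B \twoheadrightarrow B_{\fm_i}$ with the deformation-theoretic local case stated in the paragraph immediately preceding the lemma, precisely as you spell out. One cosmetic point: writing $\R{tr}(\sigma \mid D_{\R{pst}}(V_B)) = \sum_i \R{tr}(\sigma \mid D_{\R{pst}}(V_{B_{\fm_i}}))$ is fine but potentially misleading, since this ``sum'' in $B \cong \prod_i B_{\fm_i}$ is really the tuple of the componentwise traces; you recognize and resolve this in your last paragraph, so there is no actual gap.
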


\begin{proof}
It follows directly from Lemma \ref{lem:3.14}.
\end{proof}

\begin{lem} \label{lem:3.16}
Let $E'$ be a finite extension of $E$, and let $B_{E'} = E'\otimes_E B$ and $V_{B_{E'}} = B_{E'}\otimes_B V_B$. Then $V_B$ has Galois type $\tau$ if and only if $V_{B_{E'}}$ has Galois type $\tau$.	
\end{lem}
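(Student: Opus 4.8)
The plan is to reduce the statement to the behaviour of traces under flat base change, exactly as in the proof of Lemma \ref{lem:3.14}. First I would recall that by Lemma \ref{lem:3.13} we have a canonical isomorphism $D_{\R{pst}}(V_{B_{E'}}) \cong B_{E'}\otimes_B D_{\R{pst}}(V_B) = E'\otimes_E D_{\R{pst}}(V_B)$ of $B_{E'}\otimes_{\bQ_p}K_0^{\R{ur}}$-modules, compatible with the Frobenius and the $\cG_K$-action, hence with the $I_K$-action. In particular $V_B$ is semi-stable over a finite Galois $L/K$ if and only if $V_{B_{E'}}$ is, and we may fix such an $L$ with $I_L \subset \R{ker}(\tau)$ once and for all.

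Next, for each $\sigma \in I_{L/K}$ the endomorphism $\sigma$ acts on $D_{\R{st}}^L(V_{B_{E'}}) \cong E'\otimes_E D_{\R{st}}^L(V_B)$ as $1\otimes(\sigma|D_{\R{st}}^L(V_B))$, so taking traces over $L_0$ (or equivalently the $B$-valued, resp. $B_{E'}$-valued, traces coming from the $L_0$-linear structure) gives $\R{tr}(\sigma|D_{\R{st}}^L(V_{B_{E'}})) = \iota\bigl(\R{tr}(\sigma|D_{\R{st}}^L(V_B))\bigr)$, where $\iota: B \hookrightarrow B_{E'} = E'\otimes_E B$ is the canonical injection. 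Since $\iota$ is injective and carries $\R{tr}(\tau(\sigma))\in\cO_E\subset B$ to the same element viewed in $B_{E'}$, the condition ``$\R{tr}(\sigma|D_{\R{st}}^L(V_B)) = \R{tr}(\tau(\sigma))$ for all $\sigma\in I_{L/K}$'' holds in $B$ if and only if the corresponding condition holds in $B_{E'}$. By the criterion for Galois type recorded before Lemma \ref{lem:3.14} (namely, $V$ has Galois type $\tau$ iff $V$ is semi-stable over $L$ and these trace identities hold), this is precisely the assertion. In fact the whole argument is a special case of Lemma \ref{lem:3.14} applied to the injective $E$-algebra map $\alpha = \iota: B \hookrightarrow B_{E'}$, so the proof reduces to a one-line citation.

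I do not expect any real obstacle here; the only point requiring a small amount of care is to make sure the trace identities are compared in the right ring — one should note that $\R{tr}(\tau(\sigma))$ lands in $\cO_E$, which sits inside both $B$ and $B_{E'}$ compatibly with $\iota$, so no scalar-extension subtlety arises. The brief proof is therefore:

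\begin{proof}
Apply Lemma \ref{lem:3.14} to the injective $E$-algebra morphism $\iota: B \hookrightarrow B_{E'} = E'\otimes_E B$. Since $\iota$ is injective and fixes the elements $\R{tr}(\tau(\sigma))\in\cO_E$, Lemma \ref{lem:3.14} gives that $V_B$ has Galois type $\tau$ if and only if $V_{B_{E'}}$ does.
\end{proof}
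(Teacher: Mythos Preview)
Your proof is correct and matches the paper's approach exactly: the paper's proof is the one-line observation that the natural $E$-algebra map $B \to B_{E'}$ is injective, so the result follows from Lemma~\ref{lem:3.14}.
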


\begin{proof}
Since the natural map of $E$-algebras $B \arr B_{E'}$ is injective, it follows from Lemma \ref{lem:3.14}.	
\end{proof}

The following theorem is essential in studying the locus of representations with a given Galois type. 

\begin{thm} \label{thm:3.17}
Let $\tau$ be a Galois type, and let $L/K$ be a finite Galois extension in $\bar{K}$ over which $\tau$ becomes trivial. Let $A$ be a finite flat $\cO_E$-algebra and $\rho: \cG_K \arr \emph{GL}_d(A)$ be a Galois representation such that $\rho \otimes_{\bZ_p} \bQ_p$ is semi-stable over $L$ having Hodge-Tate weights in $[0, r]$.

Suppose that for each positive integer $n$, there exist a finite flat $\cO_E$-algebra $A_n$, a Galois representation $\rho_n: \cG_K \arr \emph{GL}_d(A_n)$, and an $\cO_E$-linear surjection $\beta_n: A_n \arr A/p^n A$ such that $A/p^n A \otimes_A \rho \cong A/p^n A \otimes_{\beta_n, A_n} \rho_n$ as $A[\cG_K]$-modules, and that $\rho_n \otimes_{\bZ_p} \bQ_p$ is semi-stable over $L$ having Hodge-Tate weights in $[0, r]$ and Galois type $\tau$. 

Then $\rho \otimes_{\bZ_p} \bQ_p$ also has Galois type $\tau$. 	
\end{thm}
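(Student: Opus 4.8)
The plan is to reduce to a statement about traces of inertia on the functor $D_{\mathrm{pst}}$, and then to use the integral $p$-adic Hodge theory machinery of Section \ref{sec:2} — in particular the functor $M_{\mathrm{st}}$ and Theorem \ref{thm:2.7} — exactly as in the proof of Theorem \ref{thm:3.3}, but now tracking the $\Gamma = \mathrm{Gal}(K'/K)$-action rather than the filtration. Set $K' = L$ (which we may enlarge to be totally ramified over $K_0$ if necessary, absorbing the unramified part into the coefficients), and recall that $\rho\otimes\mathbf{Q}_p$ has Galois type $\tau$ if and only if it is semi-stable over $L$ and $\mathrm{tr}(\sigma\mid D_{\mathrm{st}}^{L}(\rho\otimes\mathbf{Q}_p)) = \mathrm{tr}(\tau(\sigma))$ for all $\sigma \in I_{L/K}$. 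Since $\rho\otimes\mathbf{Q}_p$ is already assumed semi-stable over $L$, the content is purely the trace identity. So it suffices to show: for each $\sigma \in I_{L/K}$, we have $\mathrm{tr}(\sigma\mid D_{\mathrm{st}}^{L}(\rho\otimes\mathbf{Q}_p)) = \mathrm{tr}(\tau(\sigma))$ in $A[\tfrac{1}{p}]$. Because $A$ is finite flat over $\cO_E$, it embeds into $\prod A[\tfrac1p] \to \prod (\text{residue fields})$, but more usefully: an element of $A$ is zero iff its image in $A/p^n A$ is zero for all $n$ (as $A$ is $p$-adically separated, being finite flat over $\cO_E$). So it is enough to match the two traces modulo $p^{n}$ for every $n$, possibly after shrinking to $p^{n-c'}$ for a universal constant.

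The key steps, in order, are as follows. First I would fix $n$ and apply the reduction apparatus from the proof of Theorem \ref{thm:3.3}: using Lemmas \ref{lem:3.15}, \ref{lem:3.16}, and the base-change behavior of $D_{\mathrm{pst}}$ in Lemma \ref{lem:3.13}, reduce to the case where the ambient coefficient ring for $\rho$ is $\cO_E$ itself and where $A_n$ is \emph{good} in the sense of Section \ref{sec:3.1} (enlarging $E$ to split the relevant étale algebras, taking $\cO_L$-base change, and projecting to a factor). Second, let $T = A/p^n A \otimes_A \rho \cong A_n/p^n A_n \otimes_{\beta_n, A_n}\rho_n$ be the common torsion quotient, with the two lifts $j$ (from $\rho$) and $j_n$ (from $\rho_n$); this lies in $\mathrm{Rep}_{\mathrm{tor},\cO_E}^{\mathrm{pst},L,r}$. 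Third, apply Theorem \ref{thm:2.7} together with Corollary \ref{cor:2.8} and Proposition \ref{prop:2.9}: there are maps $\tilde h\colon M_{\mathrm{st},j_n}(T) \to M_{\mathrm{st},j}(T)$ and $\tilde h_1$ in the other direction, $\cO_E\otimes_{\mathbf{Z}_p}W(k)$-linear and $\Gamma$-equivariant (the $\Gamma$-action is part of the structure of $M_{\mathrm{tor}}^{\mathrm{fil},r}(\varphi,N,\Gamma)$, and $\tilde h$ respects it by Theorem \ref{thm:2.7}, since $f = \mathrm{id}$), with $\tilde h\circ\tilde h_1 = p^{2c}\,\mathrm{id}$ and $\tilde h_1\circ\tilde h = p^{2c}\,\mathrm{id}$. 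Fourth, note that $M_{\mathrm{st},j}(T) \cong M_{\mathrm{st}}(\rho)/p^n M_{\mathrm{st}}(\rho)$ (from the defining exact sequence, since $\rho \xrightarrow{p^n}\rho$ is the relevant lift), and similarly $M_{\mathrm{st},j_n}(T)\cong M_{\mathrm{st}}(\rho_n)/p^n M_{\mathrm{st}}(\rho_n)$, and that by Lemma \ref{lem:3.11} (using goodness) $M_{\mathrm{st}}(\rho_n)$ is finite free over $A_n\otimes_{\mathbf{Z}_p}W(k)$ of rank $d$ and $M_{\mathrm{st}}(\rho)$ is finite free over $\cO_E\otimes_{\mathbf{Z}_p}W(k)$ of rank $d$. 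Fifth — the trace computation — observe that $\sigma \in I_{L/K} \subset \Gamma$ acts $\cO_E\otimes W(k)$-linearly on these finite free modules, so $\mathrm{tr}(\sigma\mid M_{\mathrm{st}}(\rho))$ and $\mathrm{tr}(\sigma\mid M_{\mathrm{st}}(\rho_n))$ are well-defined elements of $\cO_E\otimes W(k)$ reducing to the traces on $D_{\mathrm{st}}^{L}$ after inverting $p$ (up to the standard identification of $M_{\mathrm{st}}\otimes\mathbf{Q}_p$ with $D_{\mathrm{st}}^{L}$ from Theorem \ref{thm:2.2}); the existence of the maps $\tilde h,\tilde h_1$ intertwining the $\sigma$-actions on $M_{\mathrm{st},j_n}(T)$ and $M_{\mathrm{st},j}(T)$ up to $p^{2c}$ forces
\[
p^{2c}\,\mathrm{tr}\bigl(\sigma\mid M_{\mathrm{st}}(\rho)\bigr) \equiv p^{2c}\,\mathrm{tr}\bigl(\sigma\mid M_{\mathrm{st}}(\rho_n)\bigr) \pmod{p^{n}}
\]
in $\cO_E\otimes W(k)$ (apply $\tilde h$, then $\tilde h_1$, and compare traces of the composite $p^{2c}\sigma$ acting on the free module $M_{\mathrm{st},j}(T)$). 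Since $\rho_n\otimes\mathbf{Q}_p$ has Galois type $\tau$, $\mathrm{tr}(\sigma\mid M_{\mathrm{st}}(\rho_n))$ maps to $\mathrm{tr}(\tau(\sigma)) \in \cO_E$ under the appropriate component projection; running this over all $n$ and using $p$-adic separatedness of $\cO_E\otimes W(k)$ yields $\mathrm{tr}(\sigma\mid D_{\mathrm{st}}^{L}(\rho\otimes\mathbf{Q}_p)) = \mathrm{tr}(\tau(\sigma))$, which is exactly Galois type $\tau$.

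The main obstacle I anticipate is the bookkeeping in Step 5: making precise the claim that a map between two finite free modules over $\cO_E\otimes W(k)$ (or their reductions) which agrees with a given automorphism $\sigma$ up to a factor $p^{2c}$ on a common torsion quotient actually pins down the trace of $\sigma$ on the torsion-free models modulo a controlled power of $p$. The subtlety is that $\tilde h$ and $\tilde h_1$ need not be reductions of $\cO_E\otimes W(k)$-linear maps between $M_{\mathrm{st}}(\rho)$ and $M_{\mathrm{st}}(\rho_n)$ themselves (the two rings $A$ and $A_n$ are different, related only through the quotient $A/p^n$), so one must argue entirely at the level of the torsion module $M_{\mathrm{st},j}(T) \cong M_{\mathrm{st}}(\rho)/p^n$: there $\sigma$ acts as a $d\times d$ matrix over $\cO_{T_s}/p^n$ (after the component decompositions of Section \ref{sec:3.1}), its trace is $\mathrm{tr}(\sigma\mid M_{\mathrm{st}}(\rho))\bmod p^n$, and the relation $\tilde h_1\circ(\sigma\text{ on }M_{\mathrm{st},j_n}(T))\circ\tilde h = p^{2c}\,(\sigma\text{ on }M_{\mathrm{st},j}(T))$ (which holds because $\tilde h$ is $\Gamma$-equivariant, $\sigma\in\Gamma$, and $\tilde h_1\circ\tilde h = p^{2c}$) gives $p^{2c}\,\mathrm{tr}(\sigma\mid M_{\mathrm{st},j}(T)) = \mathrm{tr}(\sigma\mid M_{\mathrm{st},j_n}(T)\text{ composed appropriately})$; combined with $\mathrm{tr}(\sigma\mid M_{\mathrm{st},j_n}(T)) = \mathrm{tr}(\tau(\sigma))\bmod p^n$ this bounds $\mathrm{tr}(\sigma\mid M_{\mathrm{st}}(\rho)) - \mathrm{tr}(\tau(\sigma))$ by $p^{n-2c}$ in each component, hence (letting $n\to\infty$) forces equality. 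I would also double-check that the constant $c$ of Theorem \ref{thm:2.7} and the splitting constant $c'$ of Lemma \ref{lem:3.9} combine into a single constant depending only on $L$ (equivalently $F(u)$), $r$, and $d$, so that the hypothesis "for each $n$" is genuinely what is needed — indeed the statement as given quantifies over all $n$, so no uniform constant is even required, only the limiting argument.
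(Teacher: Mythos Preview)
Your proposal is correct and follows essentially the same approach as the paper: reduce $A$ to $\cO_H$ via Lemmas \ref{lem:3.15}--\ref{lem:3.16}, then for each fixed $n$ enlarge further to $\cO_{H'}$ so that $\cO_{H'}\otimes_{\cO_H}A_n$ becomes good (so Lemma \ref{lem:3.11} applies), apply Corollary \ref{cor:2.8} and Proposition \ref{prop:2.9} to the common torsion quotient $T$ to obtain $\Gamma$-equivariant $\cO_{H'}\otimes\cO_{L_0}$-linear maps with composites $p^{c''}\mathrm{Id}$, and deduce $p^{c''}\,\mathrm{tr}(C_1)=p^{c''}\,\mathrm{tr}(C_2)$ in $\cO_{H'}/p^n$ via $\mathrm{tr}(C_2D_1D_2)=\mathrm{tr}(D_2D_1C_1)$, then let $n\to\infty$. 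The one point to clean up is your handling of the unramified part of $L$: one cannot ``enlarge $L$ to be totally ramified over $K_0$'' or absorb it into the coefficients; instead the paper replaces the triple $(K_0,K,K')$ of Section~\ref{sec:2} by $(L_0,KL_0,L)$, where $L_0$ is the maximal unramified subextension of $L/K_0$, and uses $I_{L/K}\cong I_{L/KL_0}$ so that the $\Gamma$-action in $M_{\mathrm{tor}}^{\mathrm{fil},r}(\varphi,N,\Gamma)$ records exactly the inertial action whose trace you need.
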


\begin{proof}
Let $B = A[\frac{1}{p}]$. We have $B_{\R{red}} \cong \prod_i E_i$ for some finite extensions $E_i/E$. Let $H / E$ be a finite Galois extension containing the Galois closures of $E_i$ for all $i$. We write $A_{\cO_H} = \cO_H \otimes_{\cO_E} A$. Then $(A_{\cO_H}[\frac{1}{p}])_{\R{red}} \cong H\otimes_E B_{\R{red}} \cong H \otimes_E \prod_i E_i$. Since $H$ contains the Galois closures of $E_i$ for all $i$, $H\otimes_E E_i \cong \prod_j H$ with $E_i$ embedding to $H$ differently. This induces the natural map $\psi_l: A_{\cO_H} \arr A_{\cO_H}[\frac{1}{p}] \arr H$ to the $l$-th factor of $\prod_j H$. Let $A_l = \psi_l(A_{\cO_H})$. Since $\psi_l: A_{\cO_H} \twoheadrightarrow A_l \subset H$ is a morphism of $\cO_H$-algebras, $A_l = \cO_H$. By Lemma \ref{lem:3.15} and \ref{lem:3.16}, it suffices to show that $H\otimes_{\psi_l, A_{\cO_H}} (A_{\cO_H} \otimes_A \rho)$ has Galois type $\tau$. Therefore, we may and will replace $A$ by $\cO_H$, $\rho$ by $\cO_H \otimes_{\psi_l, A_{\cO_H}} (A_{\cO_H} \otimes_A \rho)$, $A_n$ by $\cO_H\otimes_{\cO_E} A_n$, and replace $\beta_n$ and $\rho_n$ accordingly.

Denote by $L_0$ the maximal unramified extension of $K_0$ contained in $L$. Note that $I_{L/K} \cong I_{L/KL_0}$. Applying the results of Section \ref{sec:2} with $(L_0, KL_0, L)$ in place of $(K_0, K, K')$, we get the associated lattice $M_{\R{st}}(\rho) \in L^r(\p, N, \R{Gal}(L/KL_0))$ in $D_{\R{st}}^L(\rho\otimes_{\bZ_p}\bQ_p)$. By the proof of Lemma \ref{lem:3.11}, $M_{\R{st}}(\rho)$ is a free $\cO_H \otimes_{\bZ_p}\cO_{L_0}$-module of rank $d$. Thus, for all $\sigma \in I_{L/K}$, $\R{tr}(\sigma | D_{\R{st}}^L(\rho\otimes_{\bZ_p}\bQ_p)) = \R{tr}(\sigma | M_{\R{st}}(\rho)) \in \cO_H$.

Now, fix a positive integer $n$, and let $B_n  = A_n[\frac{1}{p}]$. $B_{n, \R{red}} \cong \prod_i F_i$ for some finite extensions $F_i/E$. Let $H'/H$ be a finite Galois extension which contains the Galois closures of $F_i$ for all $i$. Similarly as in the proof of Theorem \ref{thm:3.3}, we see that $\cO_{H'}\otimes_{\cO_H}A_n$ is good (as defined in Section \ref{sec:3.1}). Thus, $M_{\R{st}}(\cO_{H'}\otimes_{\cO_H}\rho_n) \in L^r(\p, N, \R{Gal}(L/KL_0))$ is a free $(\cO_{H'}\otimes_{\cO_H}A_n)\otimes_{\bZ_p}\cO_{L_0}$-module of rank $d$ by the proof of Lemma \ref{lem:3.11}. 

Note that we have the surjection $\cO_{H'}\otimes_{\cO_H}\beta_n: \cO_{H'}\otimes_{\cO_H}A_n \twoheadrightarrow \cO_{H'}\otimes_{\cO_H}A/p^n A= \cO_{H'}/p^n\cO_{H'}$. Denote by $T$ the torsion $\cG_K$-representation 
\[
\cO_{H'}/p^n\cO_{H'} \otimes_{\cO_{H'}}(\cO_{H'}\otimes_{\cO_H}\rho) \cong \cO_{H'}/p^n\cO_{H'} \otimes_{\cO_{H'}\otimes_{\cO_H}\beta_n, ~\cO_{H'}\otimes_{\cO_H}A_n}(\cO_{H'}\otimes_{\cO_H}\rho_n).
\]
$T$ has two lifts $j_1$ and $j_2$ corresponding to $\cO_{H'}\otimes_{\cO_H}\rho$ and $\cO_{H'}\otimes_{\cO_H}\rho_n$ respectively, and we obtain $M_{\R{st}, j_1}(T), ~M_{\R{st}, j_2}(T) \in M_{\R{tor}}^{\R{fil}, r}(\p, N, \R{Gal}(L/KL_0))$. Note that $M_{\R{st}, j_1}(T) \cong \cO_{H'}/p^n\cO_{H'}\otimes_{\cO_{H'}}M_{\R{st}}(\cO_{H'}\otimes_{\cO_H}\rho)$ and $M_{\R{st}, j_2}(T) \cong (\cO_{H'}\otimes_{\cO_H}A_n)/\R{ker}(\cO_{H'}\otimes_{\cO_H}\beta_n)\otimes_{\cO_{H'}\otimes_{\cO_H}A_n}M_{\R{st}}(\cO_{H'}\otimes_{\cO_H}\rho_n)$. Thus, $M_{\R{st}, j_1}(T)$ and $M_{\R{st}, j_2}(T)$ are free over $\cO_{H'}/p^n\cO_{H'} \otimes_{\bZ_p} \cO_{L_0}$ of rank $d$, for which we fix a choice of bases.  Let $\sigma \in I_{L/K}$, and for $i= 1, 2$, let $C_i$ be the $d \times d$ matrix with coefficients in $\cO_{H'}/p^n\cO_{H'} \otimes_{\bZ_p} \cO_{L_0}$ which represents the $\sigma$-action on $M_{\R{st}, j_i}(T)$ with respect to the chosen bases. By Corollary \ref{cor:2.8} and Proposition \ref{prop:2.9}, there exist $I_{L/K}$-equivariant $\cO_{H'}\otimes_{\bZ_p}\cO_{L_0}$-module morphisms $g_1: M_{\R{st}, j_1}(T) \arr M_{\R{st}, j_2}(T)$ and $g_2: M_{\R{st}, j_2}(T) \arr M_{\R{st}, j_1}(T)$ such that $g_1 \circ g_2 = p^{c''}\R{Id}|_{ M_{\R{st}, j_2}(T)}$ and $g_2 \circ g_1 = p^{c''}\R{Id}|_{ M_{\R{st}, j_1}(T)}$ where $c''$ is a constant depending only on the Eisenstein polynomial for $L/L_0$ and $r$. For $i = 1, 2$, let $D_i$ be the $d \times d$ matrix with coefficients in $\cO_{H'}/p^n\cO_{H'} \otimes_{\bZ_p} \cO_{L_0}$ representing $g_i$. Then $D_1 D_2 = D_2 D_1 = p^{c''} \R{Id}$ and $C_2D_1 = D_1C_1$. Thus,
\[
\R{tr}(C_2D_1D_2) = \R{tr}(D_1C_1D_2) = \R{tr}(D_2D_1C_1),
\]
i.e., $p^{c''}\R{tr}(C_1) = p^{c''}\R{tr}(C_2)$ in $\cO_{H'}/p^n\cO_{H'}$. Since $\R{tr}(\sigma | M_{\R{st}}(\cO_{H'}\otimes_{\cO_H}\rho_n)) = \R{tr}(\tau(\sigma)) \in \cO_E$ and $\R{tr}(\sigma | M_{\R{st}}(\cO_{H'}\otimes_{\cO_H}\rho)) = \R{tr}(\sigma | M_{\R{st}}(\rho)) \in \cO_H$, we have
\[
\R{tr}(\sigma|M_{\R{st}}(\tilde{\rho})) - \R{tr}(\tau(\sigma)) \in p^{n-c''}\cO_H.
\]  
Since this holds for all positive integers $n$, we have $\R{tr}(\sigma|M_{\R{st}}(\tilde{\rho})) = \R{tr}(\tau(\sigma))$.
\end{proof}

\section{Galois Deformation Ring} \label{sec:4}

We now construct the quotient of the universal deformation ring which corresponds to the locus of potentially semi-stable representations of a given Hodge-Tate type and Galois type. Let $E/\bQ_p$ be a finite extension with residue field $\bF$. Denote by $\cC$ the category of topological local $\cO_E$-algebras $A$ satisfying the following conditions:

\begin{itemize}
\item The natural map $\cO_E \rightarrow A/\fm_A$ is surjective.
\item The map from $A$ to the projective limit of its discrete artinian quotients is a topological isomorphism.	
\end{itemize}

Note that the first condition implies $\bF$ is also the residue field of $A$. The second condition is equivalent to the condition that $A$ is complete and its topology can be given by a collection of open ideals $\fa$ for which $A/\fa$ is artinian. Morphisms in $\cC$ are continuous $\cO_E$-algebra homomorphism. 

\begin{prop} \emph{(\cite[Proposition 2.4]{smit})} \label{prop:4.1}
Suppose $A$ is a noetherian ring in $\cC$. Then the topology on $A$ is equal to the $\fm_A$-adic topology, and $A$ is $\fm_A$-adically complete. Furthermore, every $\cO_E$-algebra homomorphism $A \rightarrow A'$ with $A'$ in $\cC$ is continuous.	
\end{prop}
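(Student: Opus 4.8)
The plan is to use the fact that, because the residue field $\bF$ is finite, every object of $\cC$ is a profinite ring. For $B \in \cC$ and an open ideal $\fb \subsetneq B$ with $B/\fb$ artinian, the ring $B/\fb$ is artinian local (a nonzero quotient of the local ring $B$) with residue field $\bF$, hence of finite length with every composition factor isomorphic to $\bF$, hence a finite ring; so the isomorphism $B \cong \varprojlim B/\fb$ of the definition presents $B$ as an inverse limit of finite discrete rings. In particular $B$ is compact and Hausdorff. I would record this for $A$ and for the target ring $A'$, and then prove the two assertions separately.

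For the first assertion I would show that $\{\fm_A^n\}_{n \geq 1}$ is a fundamental system of open neighborhoods of $0$. One half is immediate: any open ideal $\fa$ with $A/\fa$ artinian contains a power of $\fm_A$, since $\fm_A/\fa$ is the nilpotent maximal ideal of the artinian local ring $A/\fa$; so the given topology is coarser than the $\fm_A$-adic topology. For the other half I must check that each $\fm_A^n$ is open, and here noetherianness enters twice. First, $\fm_A^n$ is a finitely generated ideal, say $\fm_A^n = (y_1, \ldots, y_m)$, so it is the image of the continuous (linear) map $A^m \arr A$, $(a_i) \mapsto \sum_i a_i y_i$; since $A^m$ is compact and $A$ is Hausdorff, $\fm_A^n$ is therefore a closed subgroup of $(A, +)$. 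Second, $A/\fm_A^n$ is finite, being built up from the finite groups $\fm_A^i/\fm_A^{i+1}$, each a finite-dimensional vector space over the finite field $\bF$. A closed subgroup of finite index in a topological group is open, so $\fm_A^n$ is open. This proves the topologies agree; as completeness is part of the definition of $\cC$, it follows that $A$ is $\fm_A$-adically complete.

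For the second assertion, let $f : A \arr A'$ be an $\cO_E$-algebra homomorphism with $A' \in \cC$. I would first note that $f$ is local: $A/f^{-1}(\fm_{A'})$ embeds into $A'/\fm_{A'} = \bF$, so it is a field and $f^{-1}(\fm_{A'}) = \fm_A$; in particular $f(\fm_A) \subset \fm_{A'}$. Given an open ideal $\fa' \subset A'$ with $A'/\fa'$ artinian, the nilpotence argument above (which does not use that $A'$ is noetherian) gives $\fm_{A'}^n \subset \fa'$ for some $n$, whence $f(\fm_A^n) \subset \fm_{A'}^n \subset \fa'$, i.e.\ $\fm_A^n \subset f^{-1}(\fa')$. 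Since the topology on $A$ is the $\fm_A$-adic one, $f^{-1}(\fa')$ is open, so $f$ is continuous.

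The main obstacle I anticipate is the step asserting that every $\fm_A^n$ is open --- equivalently, that $A/\fm_A^n$ appears among the discrete artinian quotients that define the topology of $A$. Trying to prove this directly, by a diagonal argument producing one of the defining open ideals inside $\fm_A^n$, seems to go in circles; the clean route is to bring in compactness of the additive group of $A$, which is available exactly because $\bF$ is finite, and to combine it with noetherianness to see that $\fm_A^n$ is at once closed and of finite index.
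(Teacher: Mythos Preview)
The paper does not supply a proof of this proposition; it is stated with a citation to \cite[Proposition 2.4]{smit} and used as a black box. Your argument is correct and is essentially the standard one: the crucial point is that $\bF$ is finite, so every artinian quotient of a ring in $\cC$ is finite and hence every object of $\cC$ is profinite (compact Hausdorff); noetherianness then makes each $\fm_A^n$ simultaneously closed (as the continuous image of the compact set $A^m$) and of finite index, hence open. The automatic continuity in the second part follows cleanly from locality of $f$ plus the first part, as you indicate.
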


Let $V_0$ be a continuous $\bF$-representation of $\cG_K$ having rank $d$. For $A \in \cC$, a \textit{deformation} of $V_0$ in $A$ is an isomorphism class of continuous $A$-representations $V$ of $\cG_K$ satisfying $\bF\otimes_A V \cong V_0$ as $\bF[\cG_K]$-modules. We denote by $\R{Def}(V_0, A)$ the set of such deformations. A morphism $A \rightarrow A'$ in $\cC$ induces a map $f_*: \R{Def}(V_0, A) \rightarrow \R{Def}(V_0, A')$ sending the class of a representation $V$ over $A$ to the class of $A'\otimes_{f, A} V$. Assume $V_0$ is absolutely irreducible. Then, the following is proved in \cite{smit}.

\begin{prop} \label{prop:4.2} \emph{(cf. \cite[Theorem 2.3]{smit})}
There exists a universal deformation ring $R \in \cC$ and a deformation $V_R \in \R{Def}(V_0, R)$ such that for all rings $A \in \cC$, we have a bijection 
\begin{equation}
\R{Hom}_{\cC}(R, A) \stackrel{\cong}{\rightarrow} \R{Def}(V_0, A)
\end{equation}
given by $f \mapsto f_*(V_R)$. The ring $R$ is noetherian if and only if $\R{dim}_{\bF}H^1(\cG_K, \R{End}_{\bF}(V_0))$ is finite. 
\end{prop}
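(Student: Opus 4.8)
The plan is to run the standard Mazur--Schlessinger machine for pro-representing a deformation functor, but with the non-noetherian form of Schlessinger's criterion from \cite{smit} in place of the usual one, and to keep track of exactly where absolute irreducibility of $V_0$ is used; this is in effect the argument of \cite[Theorem~2.3]{smit}. First I would reduce to artinian coefficients. For $A\in\cC$, write $A=\varprojlim_i A_i$ over its discrete artinian quotients; since $\R{GL}_d$ is representable it commutes with the inverse limit, so a continuous representation $\cG_K\to\R{GL}_d(A)$ is the same datum as a compatible system of continuous representations $\cG_K\to\R{GL}_d(A_i)$, and, using the topological axioms defining $\cC$, one gets $\R{Def}(V_0,A)\cong\varprojlim_i\R{Def}(V_0,A_i)$. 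Hence it suffices to pro-represent, by an object of $\cC$, the restriction $F_0$ of $\R{Def}(V_0,-)$ to the full subcategory $\cC_0\subset\cC$ of discrete artinian local $\cO_E$-algebras with residue field $\bF$, noting that $F_0(\bF)=\{V_0\}$ is a single point.

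Next I would verify that $F_0$ satisfies the Schlessinger gluing conditions --- surjectivity of $F_0(A'\times_A A'')\to F_0(A')\times_{F_0(A)}F_0(A'')$ for a small surjection $A'\twoheadrightarrow A$ and arbitrary $A''\to A$, together with bijectivity when $A'=A''$ --- while deliberately dropping the finiteness hypothesis on the tangent space. Surjectivity is formal: one glues two deformations that agree over $A$ after adjusting by the (liftable) comparison isomorphism over $A$. Bijectivity on $A'\times_A A'$ is the only non-formal input, and it is where absolute irreducibility enters: Schur's lemma gives $\R{End}_{\bF[\cG_K]}(V_0)=\bF$, and a successive-approximation argument up the tower of small extensions then yields $\R{End}_{A[\cG_K]}(V)=A$ for every deformation $V$ over $A\in\cC_0$, so $V$ admits no nontrivial deformation automorphisms. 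This rigidity is precisely what upgrades a versal hull to a \emph{universal} one. Applying the non-noetherian version of Schlessinger's theorem (\cite[Theorem~2.3]{smit}) then produces $R\in\cC$ and $V_R\in\R{Def}(V_0,R)$ with $\R{Hom}_\cC(R,A)\to\R{Def}(V_0,A)$, $f\mapsto f_*(V_R)$, bijective for all $A\in\cC$; when the tangent space below is infinite-dimensional the construction exhibits $R$ as an inverse limit assembled from power series rings $\cO_E[[x_i:i\in I]]$ with $I$ infinite, which accounts for possible non-noetherianness.

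For the noetherianness criterion, I would compute the mod-$\fm$ tangent space. By the universal property, $\R{Hom}_\cC(R,\bF[\epsilon]/(\epsilon^2))\cong\R{Def}(V_0,\bF[\epsilon]/(\epsilon^2))$, and the latter is computed in the usual way: a deformation to $\bF[\epsilon]$ is $V_0\otimes_\bF\bF[\epsilon]$ with $\cG_K$ acting through $g\mapsto(1+\epsilon\,c(g))\rho_0(g)$ for a $1$-cocycle $c\colon\cG_K\to\R{End}_\bF(V_0)$, two such being isomorphic as deformations exactly when the cocycles differ by a coboundary, so $\R{Def}(V_0,\bF[\epsilon]/(\epsilon^2))\cong H^1(\cG_K,\R{End}_\bF(V_0))$. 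On the other hand this Hom-set is identified with $\R{Hom}_\bF\big(\fm_R/(\fm_R^2+\fm_{\cO_E}R),\,\bF\big)$, and $\dim_\bF\fm_R/\fm_R^2$ exceeds $\dim_\bF\fm_R/(\fm_R^2+\fm_{\cO_E}R)$ by at most one (the contribution of a uniformizer of $\cO_E$). Thus if $R$ is noetherian then $\fm_R/\fm_R^2$ is finite-dimensional, hence so is $H^1(\cG_K,\R{End}_\bF(V_0))$. Conversely, if $H^1(\cG_K,\R{End}_\bF(V_0))$ is finite-dimensional then $\dim_\bF\fm_R/\fm_R^2<\infty$; since $R$ is complete (second axiom of $\cC$), a complete Nakayama argument produces a surjection $\cO_E[[x_1,\dots,x_n]]\twoheadrightarrow R$, so $R$ is noetherian, and its topology is then the $\fm_R$-adic one as in Proposition~\ref{prop:4.1}.

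The step requiring the most care is the passage to the non-noetherian Schlessinger criterion: one must check that the inverse-limit construction of the versal ring still functions when $H^1(\cG_K,\R{End}_\bF(V_0))$ is infinite-dimensional and that it outputs a genuine object of $\cC$ --- a complete, pro-artinian local $\cO_E$-algebra --- together with the Schur-powered verification that the resulting object is universal and not merely versal. The accompanying topological bookkeeping (continuity of representations valued in inverse limits, continuity of the classifying homomorphisms, and compatibility with Proposition~\ref{prop:4.1}) is routine but must be kept honest throughout.
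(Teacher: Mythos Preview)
The paper does not supply its own proof of this proposition; it simply records the result with a reference to \cite[Theorem~2.3]{smit}. Your proposal is a correct and careful reconstruction of the argument behind that cited theorem: the reduction to artinian coefficients via the pro-artinian description of objects of $\cC$, the verification of the Schlessinger gluing conditions, the use of absolute irreducibility through Schur's lemma to obtain $\R{End}_{A[\cG_K]}(V)=A$ and hence rigidity of deformations (upgrading versal to universal), and the identification of the tangent space with $H^1(\cG_K,\R{End}_\bF(V_0))$ to characterize noetherianness. There is nothing to compare, and no gap to flag; your sketch is exactly what the citation is meant to convey.
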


Note that if $K/\bQ_p$ is not finite, then $R$ is not necessarily noetherian in general.

We fix a Hodge-Tate type $\bm{v}$ and Galois type $\tau$, and let $L/K$ be a finite Galois extension over which $\tau$ becomes trivial. Let $\cC^0$ be the full subcategory of $\cC$ consisting of artinian rings. Abusing the notation, we write $V \in \R{Def}(V_0, A)$ for a continuous $A$-representation $V$ to mean that $\bF\otimes_A V \cong V_0$. For $A \in \cC^0$ and a $\cG_K$-representation $V_A \in \R{Def}(V_0, A)$, we say $V_A$ is $\textit{potentially semi-stable of type}$ $(\bm{v}, \tau)$ if there exist a finite flat $\cO_E$-algebra $B$, a surjection $g: B \arr A$ of $\cO_E$-algebras, and a continuous $B$-representation $V_B$ of $\cG_K$  such that $V_B \otimes_{\bZ_p} \bQ_p$ is potentially semi-stable having Hodge-Tate type $\bm{v}$ and Galois type $\tau$, and $A \otimes_{g, B} V_B \cong V_A$ as $A[\cG_K]$-modules. For $A \in \cC$, denote by $S_{\bm{v}, \tau}(A)$ the subset of $\R{Def}(V_0, A)$ consisting of the isomorphism classes of representations $V_A$ such that $A/\fa\otimes_A V_A$ is potentially semi-stable of type $(\bm{v}, \tau)$ for all open ideals $\fa \subsetneq A$. 

\begin{prop} \label{prop:4.3}
For any $\cC$-morphism $f: A \rightarrow A'$, we have $f_*(S_{\bm{v}, \tau}(A)) \subset S_{\bm{v}, \tau}(A')$. There exists a closed ideal $\fa_{\bm{v}, \tau}$ of the universal deformation ring $R$ such that the map $(4.1)$ induces a bijection $\R{Hom}_{\cC}(R/\fa_{\bm{v}, \tau}, A) \stackrel{\cong}{\rightarrow} S_{\bm{v}, \tau}(A)$.
\end{prop}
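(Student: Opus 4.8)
The plan is to mimic Kisin's argument in \cite[Section 2.7]{kisin-semistabledeformation}, using Theorem \ref{thm:3.3} and Theorem \ref{thm:3.17} as the geometric input that replaces the affine Grassmannian machinery unavailable in the infinite residue field setting. First I would check functoriality: given a $\cC$-morphism $f: A \to A'$ and a class $V_A \in S_{\bm{v},\tau}(A)$, and an open ideal $\fa' \subsetneq A'$, one pulls back along $f$ to find an open ideal $\fa \subsetneq A$ with $f(\fa) \subset \fa'$, so that $A'/\fa' \otimes_{A'} f_*(V_A)$ is a quotient of $A/\fa \otimes_A V_A$, which is potentially semi-stable of type $(\bm v, \tau)$; hence so is its quotient (here one uses that quotients of artinian $\cO_E$-algebras lift compatibly, via Lemma \ref{lem:3.5}, Lemma \ref{lem:3.8} for the Hodge-Tate type and Lemma \ref{lem:3.15}, Lemma \ref{lem:3.16} for the Galois type, together with the fact that a further $\cO_E$-algebra quotient of a finite flat coefficient ring still admits a finite flat lift). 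This gives $f_*(S_{\bm v, \tau}(A)) \subset S_{\bm v, \tau}(A')$.

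Next I would construct the ideal. Since $R = \varprojlim R/\fb$ over open ideals $\fb$ with $R/\fb$ artinian, it suffices to exhibit, for each such $\fb$, a quotient of $R/\fb$ whose $A$-points (for $A$ artinian) are exactly $S_{\bm v, \tau}(R/\fb)$, and to check these are compatible so that the inverse limit of the kernels defines a closed ideal $\fa_{\bm v, \tau}$. For a fixed artinian quotient $\bar R = R/\fb$, the universal deformation $V_{\bar R}$ is a representation over an artinian local $\cO_E$-algebra. One considers the set of $\cO_E$-algebra quotients $\bar R \twoheadrightarrow A$ such that $A \otimes V_{\bar R}$ is potentially semi-stable of type $(\bm v, \tau)$; I claim there is a smallest such quotient $\bar R/\bar\fa$. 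To see this, note that $\bar R$ has only finitely many ideals (it is artinian local over $\cO_E$ with finite residue field $\bF$... — more precisely $\bar R$ is a finite-length $\cO_E$-module, hence has finitely many submodules), so the intersection $\bar\fa$ of all such quotient-defining ideals is again such an ideal provided the ``potentially semi-stable of type $(\bm v, \tau)$'' condition is stable under forming fibered products $A_1 \times_{A_0} A_2$ of quotients. This stability is where Theorem \ref{thm:3.17} and a variant of Theorem \ref{thm:3.3} enter: lifting $V_{\bar R/\bar\fa}$ requires producing, for each $n$, a finite flat $\cO_E$-algebra $A_n$ with a surjection $A_n \twoheadrightarrow (\bar R/\bar\fa)/\fm^n$ carrying a potentially semi-stable representation of the right type; one glues the finite flat lifts over $A_1, A_2$ of the common reduction over $A_0$, passes to a large enough $\cO_E$-subalgebra of their fiber product (ensuring it is still finite flat over $\cO_E$), reduces Hodge-Tate type via Theorems \ref{thm:3.3} and Galois type via Theorem \ref{thm:3.17} (applied with coefficient ring the lift, and $A/p^nA$ the target), and concludes. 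Then $S_{\bm v, \tau}(\bar R)$ is precisely the set of quotients factoring through $\bar R/\bar\fa$, i.e.\ $\R{Hom}_\cC(\bar R/\bar\fa, A) \xrightarrow{\sim} S_{\bm v, \tau}(\bar R)$ for artinian $A$.

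Finally I would assemble: the quotients $\bar R/\bar\fa$ for varying $\fb$ form an inverse system (by the functoriality of $S_{\bm v, \tau}$ just proved, the transition maps $R/\fb' \to R/\fb$ carry $\bar\fa'$ into the preimage of $\bar\fa$), so $\fa_{\bm v, \tau} := \varprojlim \bar\fa = \ker(R \to \varprojlim \bar R/\bar\fa)$ is a closed ideal of $R$, and $R/\fa_{\bm v, \tau} = \varprojlim \bar R/\bar\fa \in \cC$. For $A \in \cC$ one has $\R{Hom}_\cC(R/\fa_{\bm v, \tau}, A) = \varprojlim_{\fa \subsetneq A}\R{Hom}_\cC(R/\fa_{\bm v, \tau}, A/\fa)$, and since each $A/\fa$ is artinian this equals $\varprojlim_{\fa} S_{\bm v, \tau}(A/\fa)$, which by the definition of $S_{\bm v, \tau}(A)$ (the condition ``for all open $\fa \subsetneq A$'') is exactly $S_{\bm v, \tau}(A)$; compatibility with the bijection (4.1) is built into the construction. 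I expect the main obstacle to be the gluing/fibered-product step: verifying that the ``potentially semi-stable of type $(\bm v, \tau)$'' locus is closed under fibered products of quotients, which requires carefully choosing a finite flat $\cO_E$-subalgebra of a fiber product of finite flat $\cO_E$-algebras over a common torsion quotient and then feeding it into Theorems \ref{thm:3.3} and \ref{thm:3.17} with the quantitative constants $c_1$, $c''$ correctly controlling the $p$-adic approximation as $n \to \infty$.
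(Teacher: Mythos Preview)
Your overall architecture---verify that $S_{\bm v,\tau}$ is stable under injections of artinian coefficient rings and under intersections of ideals, then invoke the representability criterion of \cite[Proposition~6.1]{smit}---matches the paper's. But you have misidentified the tools, and this is a genuine gap.

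Theorems~\ref{thm:3.3} and~\ref{thm:3.17} are \emph{not} used in the proof of Proposition~\ref{prop:4.3}; they are reserved for Theorem~\ref{thm:4.4}. Your attempt to feed them into the fibered-product step cannot work: those theorems are $p$-adic approximation statements (requiring $I\subset p^{c_1}A$, or congruences modulo $p^n$ for all $n$), whereas in the intersection step everything is artinian over $\cO_E$ and there is no control on $p$-depth---the common quotient $A/(\fa_1+\fa_2)$ may well be killed by $p$ itself. Your sentence ``lifting $V_{\bar R/\bar\fa}$ requires producing, for each $n$, \ldots'' makes no sense since $\bar R/\bar\fa$ is already artinian.

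What the paper actually uses for both the intersection step and the harder (converse) direction of injection-stability is the following package: given a finite flat $\cO_E$-algebra $B'$ (e.g.\ $B_1\times B_2$) with $V_{B'}[\tfrac1p]$ of type $(\bm v,\tau)$, and a finite-index $\cO_E$-subalgebra $B\subset B'$ (taken as the preimage of the desired artinian quotient), one defines $V_B$ as an appropriate kernel inside $V_{B'}$; then $V_B[\tfrac1p]$ is semi-stable over $L$ by the main theorem of \cite{liu-fontaineconjecture}, and it inherits Hodge--Tate type $\bm v$ and Galois type $\tau$ from $V_{B'}[\tfrac1p]$ via the injection $B\hookrightarrow B'$ using Lemma~\ref{lem:3.8} and Lemma~\ref{lem:3.14}. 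The forward direction of injection-stability uses instead the construction $B'=B[x_1,\ldots,x_n]/I_{m,B}$ together with Lemmas~\ref{lem:3.2} and~\ref{lem:3.14}. No quantitative constants enter.
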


\begin{proof}
We check the conditions in \cite[Section 6]{smit}. Let $f: A \hookrightarrow A'$ be an inclusion of artinian rings in $\cC$, and let $V_A \in \R{Def}(V_0, A)$ be a representation. We first claim that $V_A \in S_{\bm{v}, \tau}(A)$ if and only if $V_{A'} \coloneqq A'\otimes_{f,A} V_A \in S_{\bm{v}, \tau}(A')$. Suppose that $V_A \in S_{\bm{v}, \tau}(A)$. Then there exist a finite flat $\cO_E$-algebra $B$, a surjection $g: B \arr A$, and a $B$-representation $V_B$ such that $V_B \otimes_{\bZ_p} \bQ_p$ is potentially semi-stable having Hodge-Tate type $\bm{v}$ and Galois type $\tau$, and $A \otimes_{g, B} V_B \cong V_A$. There exists a surjection $f': A[x_1, \ldots, x_n] \twoheadrightarrow A'$ of $\cO_E$-algebras extending $f$ such that $f'(x_i) \in \fm_{A'}$ for each $i$. Let $I_{m, A} \subset A[x_1, \ldots, x_n]$ denote the ideal generated by the $m$-th degree homogeneous polynomials with coefficients in $A$. Since $A'$ is artinian, $f'(I_{m, A}) = 0$ for a sufficiently large $m$, and $f'$ induces a surjection $A[x_1, \ldots, x_n]/I_{m, A} \twoheadrightarrow A'$ for such $m$. Thus, we have surjective homomorphisms of $\cO_E$-algebras
\[
g': B' \coloneqq B[x_1, \ldots, x_n]/ I_{m, B} \twoheadrightarrow A[x_1, \ldots, x_n]/I_{m, A} \twoheadrightarrow A'.
\]   
Note that $B'$ is a finite flat $\cO_E$-algebra. Let $V_{B'} = B'\otimes_B V_B$. Then $V_{B'}\otimes_{\bZ_p}\bQ_p$ is semi-stable over $L$. By Lemma \ref{lem:3.2} and \ref{lem:3.14}, it has Hodge-Tate type $\bm{v}$ and Galois type $\tau$. $A'\otimes_{g', B'}V_{B'} \cong V_{A'}$, so $V_{A'} \in S_{\bm{v}, \tau}(A')$.

Conversely, suppose $V_{A'} \in S_{\bm{v}, \tau}(A')$. Then there exist a finite flat $\cO_E$-algebra $B'$, a surjection $g': B' \arr A'$, and a $B'$-representation $V_{B'}$ such that $V_{B'} \otimes_{\bZ_p} \bQ_p$ is potentially semi-stable having Hodge-Tate type $\bm{v}$ and Galois type $\tau$, and we have an isomorphism $h: A' \otimes_{g', B'} V_{B'} \stackrel{\cong}{\longrightarrow} V_{A'}$ of $A'$-representations. Since $\cO_E$ is henselian, $B'$ is a finite product of local rings flat over $\cO_E$. By Lemma \ref{lem:3.2} and Lemma \ref{lem:3.14}, we can take $B'$ to be a local ring, since $A'$ is local. Thus, we can lift the isomorphism $h$ to an isomorphism of $B'$-modules $h_1: V_{B'} \stackrel{\cong}{\longrightarrow} V_{B'}$ such that the composite
\[
A'\otimes_{g', B'} V_{B'} \stackrel{\mbox{id}\otimes h_1^{-1}}{\longrightarrow} A'\otimes_{g', B'} V_{B'} \stackrel{h}{\longrightarrow} V_{A'}
\] 
is the identity map of $A'$-modules. 

Let $B$ be the kernel of the composite of morphisms $B' \stackrel{g'}{\rightarrow}A' \rightarrow A'/f(A)$. Then $B$ is a finite flat $\cO_E$-algebra, and we have the surjection $g: B \twoheadrightarrow A$ of $\cO_E$-algebras induced from $g'$. Let $V_B$ be the kernel of the following composite of morphisms
\[
V_{B'} \stackrel{h_1^{-1}}{\longrightarrow} V_{B'} \rightarrow A' \otimes_{g', B'} V_{B'} \stackrel{h}{\longrightarrow} V_{A'} \rightarrow A'/f(A)\otimes_{A'}V_{A'}.
\]
Then $V_B$ is a continous $B$-representation of $\cG_K$ such that $B'\otimes_B V_B \cong V_{B'}$ and $A\otimes_{g, B}V_B \cong V_A$, since $V_{A'} = A'\otimes_{f,A} V_A $. By the main theorem for semi-stable representations in \cite{liu-fontaineconjecture}, $V_B \otimes_{\bZ_p}\bQ_p$ is semi-stable over $L$. It has Hodge-Tate type $\bm{v}$ and Galois type $\tau$ by Lemma \ref{lem:3.8} and \ref{lem:3.14}, and therefore $V_A \in S_{\bm{v}, \tau}(A)$.

Now, for $A \in \cC$ and a representation $V_A \in \R{Def}(V_0, A)$, suppose $\fa_1, \fa_2 \subsetneq A$ are open ideals such that $A/\fa_i\otimes_A V_A \in S_{\bm{v}, \tau}(A/\fa_i)$ for $i = 1, 2$. We claim that $A/(\fa_1\cap\fa_2)\otimes_A V_A \in S_{\bm{v}, \tau}(A/(\fa_1\cap\fa_2))$. There exist a finite flat $\cO_E$-algebra $B_i$, a surjection $g_i: B_i \twoheadrightarrow A/\fa_i$, and a $B_i$-representation $V_{B_i}$ such that $V_{B_i}\otimes_{\bZ_p}\bQ_p$ is potentially semi-stable having Hodge-Tate type $\bm{v}$ and Galois type $\tau$, and that $A/\fa_i\otimes_{g_i, B_i}V_{B_i} \cong A/\fa_i\otimes_A V_A$. Let $V_{B_1 \times B_2}$ be the $(B_1 \times B_2)$-representation corresponding to $V_{B_1}\oplus V_{B_2}$. Note that $V_{B_1 \times B_2}\otimes_{\bZ_p}\bQ_p$ is potentially semi-stable having Hodge-Tate type $\bm{v}$ and Galois type $\tau$. Consider the natural inclusion $A/(\fa_1\cap\fa_2) \subset A/\fa_1 \times A/\fa_2$. Let $B$ be the kernel of the composite of morphisms 
\[
B_1 \times B_2 \stackrel{g_1\times g_2}{\longrightarrow} A/\fa_1 \times A/\fa_2 \rightarrow (A/\fa_1 \times A/\fa_2)/(A/(\fa_1\cap\fa_2)). 
\]   
Then $B$ is a finite flat $\cO_E$-algebra, and we have the surjection $g: B \rightarrow A/(\fa_1\cap\fa_2)$ induced from $g_1 \times g_2$. Let $V_B$ be the kernel of the composite of morphisms
\[
V_{B_1 \times B_2} \rightarrow (A/\fa_1 \times A/\fa_2)\otimes_{g_1\times g_2, B_1\times B_2}V_{B_1 \times B_2} \cong (A/\fa_1 \times A/\fa_2)\otimes_A V_A
\]
and
\[
(A/\fa_1 \times A/\fa_2)\otimes_A V_A \rightarrow (A/\fa_1 \times A/\fa_2)/(A/(\fa_1\cap\fa_2)) \otimes_A V_A.
\]
Then $V_B$ is a continuous $B$-representation of $\cG_K$ such that $(B_1 \times B_2) \otimes_B V_B \cong V_{B_1\times B_2}$ and $A/(\fa_1\cap\fa_2)\otimes_{g, B}V_B \cong A/(\fa_1\cap\fa_2) \otimes_A V_A$. By the main theorem for semi-stable representations in \cite{liu-fontaineconjecture}, $V_B \otimes_{\bZ_p}\bQ_p$ is semi-stable over $L$. It has Hodge-Tate type $\bm{v}$ and Galois type $\tau$ by Lemma \ref{lem:3.8} and \ref{lem:3.14}. Thus, $A/(\fa_1\cap\fa_2)\otimes_A V_A \in S_{\bm{v}, \tau}(A/(\fa_1\cap\fa_2))$.

The result then follows by \cite[Proposition 6.1]{smit}.
\end{proof}

Finally, we prove the main theorem.

\begin{thm} \label{thm:4.4}
Let $A$ be a finite flat $\cO_E$-algebra, and let $f: R \rightarrow A$ be a continuous $\cO_E$-algebra homomorphism (where we equip $A$ with the $(p)$-adic topology). Then the induced representation $A[\frac{1}{p}]\otimes_{f, R}V_R$ is potentially semi-stable of Hodge-Tate type $\bm{v}$ and Galois type $\tau$ if and only if $f$ factors through the quotient $R/\fa_{\bm{v}, \tau}$.	
\end{thm}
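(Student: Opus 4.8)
The plan is to deduce Theorem~\ref{thm:4.4} from Proposition~\ref{prop:4.3} together with the closedness results of Section~\ref{sec:3}. The forward direction should be the easy one: if $f$ factors through $R/\fa_{\bm{v},\tau}$, then for every open ideal $\fa\subsetneq R/\fa_{\bm{v},\tau}$ the pushforward of $V_R$ lies in $S_{\bm{v},\tau}$, i.e.\ $R/\fa \otimes V_R$ is potentially semi-stable of type $(\bm{v},\tau)$. Since $A$ is finite flat over $\cO_E$, its discrete artinian quotients are the $A/p^nA$, and by Proposition~\ref{prop:4.3} each $A/p^nA \otimes_{f,R} V_R$ is potentially semi-stable of type $(\bm{v},\tau)$. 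One then has to pass from these torsion quotients back to $A[\tfrac1p]\otimes_{f,R}V_R$: this is where Theorem~\ref{thm:3.3} (for the Hodge-Tate type) and Theorem~\ref{thm:3.17} (for the Galois type), together with the semi-stability descent of \cite{liu-fontaineconjecture}, enter. Concretely, for each $n$ there is a finite flat $\cO_E$-algebra $B_n$, a surjection $g_n\colon B_n\twoheadrightarrow A/p^nA$, and a $B_n$-representation $\rho_n$ with $\rho_n\otimes\bQ_p$ potentially semi-stable of type $(\bm{v},\tau)$ and $A/p^nA\otimes_{g_n,B_n}\rho_n \cong A/p^nA\otimes_{f,R}V_R$; taking $n$ large enough (larger than the constant $c_1$ of Theorem~\ref{thm:3.3}, resp.\ using all $n$ for Theorem~\ref{thm:3.17}) forces $A[\tfrac1p]\otimes_{f,R}V_R$ to be semi-stable over $L$ of Hodge-Tate type $\bm{v}$ and Galois type $\tau$.

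For the converse, suppose $A[\tfrac1p]\otimes_{f,R}V_R$ is potentially semi-stable of Hodge-Tate type $\bm{v}$ and Galois type $\tau$. I would show directly that $f$ factors through $R/\fa_{\bm{v},\tau}$ by checking, via Proposition~\ref{prop:4.3}, that $f_*(V_R)$ lies in $S_{\bm{v},\tau}(A)$ in the sense appropriate to $A\in\cC$ with the $(p)$-adic topology — that is, that $A/\fa\otimes_A(A\otimes_{f,R}V_R)$ is potentially semi-stable of type $(\bm{v},\tau)$ for every open ideal $\fa\subsetneq A$. Here one takes $B=A$, $g=\mathrm{id}$, and $V_B=A\otimes_{f,R}V_R$ itself: since this representation is already defined over the finite flat $\cO_E$-algebra $A$ and $A\otimes_{f,R}V_R\otimes_{\bZ_p}\bQ_p = A[\tfrac1p]\otimes_{f,R}V_R$ is potentially semi-stable of the right type by hypothesis, every artinian quotient $A/\fa\otimes_A V_B$ is potentially semi-stable of type $(\bm{v},\tau)$ by definition. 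Hence $f_*(V_R)\in S_{\bm{v},\tau}(A)$, and by the bijection $\R{Hom}_{\cC}(R/\fa_{\bm{v},\tau},A)\xrightarrow{\ \cong\ }S_{\bm{v},\tau}(A)$ of Proposition~\ref{prop:4.3} the map $f$ factors through $R/\fa_{\bm{v},\tau}$.

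The subtle point, and the one I expect to need the most care, is the compatibility of topologies and the continuity of $f$: Proposition~\ref{prop:4.3} produces $\fa_{\bm{v},\tau}$ as a \emph{closed} ideal and the bijection there is for arbitrary $A\in\cC$, whereas in Theorem~\ref{thm:4.4} we are handed a continuous $\cO_E$-algebra homomorphism $f\colon R\to A$ with $A$ carrying the $(p)$-adic topology. One must check that such an $f$ is the same data as a $\cC$-morphism $R\to A$ — i.e.\ that $A$ with the $(p)$-adic topology is an object of $\cC$ (its discrete artinian quotients being the $A/p^nA$) and that continuity here matches the continuity in $\cC$; this is where Proposition~\ref{prop:4.1} is used, noting $R$ need not be noetherian in general but $A$ is. The other place requiring attention is the passage ``artinian torsion quotients of type $(\bm{v},\tau)$ for all $n$'' $\Rightarrow$ ``$A[\tfrac1p]\otimes_{f,R}V_R$ of type $(\bm{v},\tau)$'' in the forward direction: this is exactly the content of Theorems~\ref{thm:3.3} and~\ref{thm:3.17} applied with $A$ in the role of the finite flat algebra there and the $B_n$ as the auxiliary algebras, and one should make sure the semi-stability of $A[\tfrac1p]\otimes_{f,R}V_R$ over $L$ is first extracted (from \cite{liu-fontaineconjecture}) before the type refinements are applied.
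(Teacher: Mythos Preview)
Your overall strategy matches the paper's, but there is one genuine gap: you repeatedly treat $A$ as an object of $\cC$, and this need not hold. Objects of $\cC$ are \emph{local} topological $\cO_E$-algebras with residue field $\bF$, whereas $A$ is only assumed to be a finite flat $\cO_E$-algebra (so it may be a product of several local rings, and even if local its residue field may be larger than $\bF$). Thus Proposition~\ref{prop:4.3} does not apply directly to $A$, and the phrase ``$f_*(V_R)\in S_{\bm{v},\tau}(A)$'' has no meaning as stated. Your remark that ``$A$ with the $(p)$-adic topology is an object of $\cC$'' is precisely the step that fails.

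The paper repairs this by replacing $A$ with $A_1\coloneqq f(R)\subset A$. Since $R$ is local with residue field $\bF$, the image $A_1$ is a local finite flat $\cO_E$-algebra with residue field $\bF$; equipped with the $(p)$-adic topology it lies in $\cC$, and $f$ factors as $R\twoheadrightarrow A_1\hookrightarrow A$. One then runs exactly your argument with $A_1$ in place of $A$. The only extra work this creates is transferring the type between $V_{A_1}[\tfrac1p]$ and $V_A[\tfrac1p]$ across the injection $A_1\hookrightarrow A$: in the direction ``$V_A[\tfrac1p]$ of type $(\bm{v},\tau)$ $\Rightarrow$ $f$ factors'' one needs Lemma~\ref{lem:3.8} and Lemma~\ref{lem:3.14} (and \cite{liu-fontaineconjecture} for semi-stability) to descend the type from $V_A$ to $V_{A_1}$ before invoking Proposition~\ref{prop:4.3}; in the other direction, once Theorems~\ref{thm:3.3} and~\ref{thm:3.17} give the type for $V_{A_1}[\tfrac1p]$, Lemmas~\ref{lem:3.2} and~\ref{lem:3.14} push it up to $V_A[\tfrac1p]$. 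With this adjustment your proof goes through and is essentially the paper's.
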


\begin{proof}
Let $A_1 = f(R) \subset A$. Then $A_1$ is a finite flat $\cO_E$-algebra and local. We equip $A_1$ with the $(p)$-adic topology. Then $A_1 \in \cC$, and the map $f: A \rightarrow A_1$ is a morphism in $\cC$. Let $V_{A_1} = A_1 \otimes_{f, R} V_R$ and $V_A = A\otimes_{f, R}V_R \cong A \otimes_{A_1}  V_{A_1}$.  

Suppose that $V_A\otimes_{\bZ_p} \bQ_p$ is potentially semi-stable of Hodge-Tate type $\bm{v}$ and Galois type $\tau$. By the main theorem for semi-stable representations in \cite{liu-fontaineconjecture}, $V_{A_1}\otimes_{\bZ_p}\bQ_p$ is semi-stable over $L$. By Lemma \ref{lem:3.8} and \ref{lem:3.14}, $V_{A_1}\otimes_{\bZ_p}\bQ_p$ has Hodge-Tate type $\bm{v}$ and Galois type $\tau$. Thus, $V_{A_1} \in S_{\bm{v}, \tau}(A_1)$, and $f$ factors through $R/\fa_{\bm{v}, \tau}$ by Proposition \ref{prop:4.3}.

Conversely, suppose $f$ factors through $R/\fa_{\bm{v}, \tau}$. Then $V_{A_1} \in S_{\bm{v}, \tau}(A_1)$ by Proposition \ref{prop:4.3}, so $A_1/p^n\otimes_{A_1} V_{A_1}$ is potentially semi-stable of type $(\bm{v}, \tau)$ for each $n \geq 1$. By the main theorem for semi-stable representations in \cite{liu-fontaineconjecture}, $V_{A_1}\otimes_{\bZ_p} \bQ_p$ is semi-stable over $L$. And by Theorem \ref{thm:3.3} and \ref{thm:3.17}, $V_{A_1}\otimes_{\bZ_p} \bQ_p$ has Hodge-Tate type $\bm{v}$ and Galois type $\tau$. Thus, $V_A \otimes_{\bZ_p}\bQ_p$ is potentially semi-stable of Hodge-Tate type $\bm{v}$ and Galois type $\tau$. 
\end{proof}

\bibliographystyle{amsalpha}
\bibliography{library}

\providecommand{\bysame}{\leavevmode\hbox to3em{\hrulefill}\thinspace}
\providecommand{\MR}{\relax\ifhmode\unskip\space\fi MR }
% \MRhref is called by the amsart/book/proc definition of \MR.
\providecommand{\MRhref}[2]{%
  \href{http://www.ams.org/mathscinet-getitem?mr=#1}{#2}
}
\providecommand{\href}[2]{#2}
\begin{thebibliography}{Fon97}

\bibitem[Bre97]{breuil-representations}
Christophe Breuil, \emph{Repr\'{e}sentations $p$-adiques semi-stables et
  transversalit\'{e} de {G}riffiths}, Math. {A}nn. \textbf{307} (1997), no.~2,
  191--224.

\bibitem[CF00]{colmez-fontaine}
Pierre Colmez and Jean-Marc Fontaine, \emph{Construction des
  repr\'{e}sentations {$p$}-adiques semi-stables}, Invent. {M}ath. \textbf{140}
  (2000), no.~1, 1--43.

\bibitem[Fon94]{fontaine-representations}
Jean-Marc Fontaine, \emph{Repr\'{e}sentations {$l$}-adiques potentiellement
  semi-stables}, Ast\'{e}risque \textbf{223} (1994), 321--347.

\bibitem[Fon97]{fontaine-deforming}
\bysame, \emph{Deforming semistable {G}alois representations}, Proc. {N}at.
  {A}cad. {S}ci. U.S.A. \textbf{94} (1997), 11138--11141.

\bibitem[Kis07]{kisin-semistabledeformation}
Mark Kisin, \emph{Potentially semi-stable deformation rings}, J. {A}mer.
  {M}ath. {S}oc. \textbf{21} (2007), no.~2, 513--547.

\bibitem[Liu07]{liu-fontaineconjecture}
Tong Liu, \emph{Torsion {$p$}-adic {G}alois representations and a conjecture of
  {F}ontaine}, Ann. {S}ci. \'{E}cole {N}orm. {S}up. \textbf{40} (2007), no.~4,
  633--674.

\bibitem[Liu10]{liu-semistable-lattice}
\bysame, \emph{A note on lattices in semi-stable representations}, Math. {A}nn.
  \textbf{346} (2010), no.~1, 117--138.

\bibitem[Liu12]{liu-filteredmodule}
\bysame, \emph{Lattices in filtered {$(\phi, N)$}-modules}, J. {I}nst. of
  {M}ath. of {J}ussieu \textbf{11} (2012), no.~3, 659--693.

\bibitem[Liu15]{liu-filtration-torsion}
\bysame, \emph{Filtration associated to torsion semi-stable representations},
  {A}nn. {I}nst. {F}ourier \textbf{65} (2015), no.~5, 1999--2035.

\bibitem[SL97]{smit}
Bart~De Smit and Hendrick~W. Lenstra, \emph{Explicit construction of universal
  deformation rings}, Modular {F}orms and {F}ermat's {L}ast {T}heorem (New
  York), Springer-Verlag, 1997, pp.~313--326.

\end{thebibliography}
	
\end{document}